\documentclass[10pt]{article}
\usepackage[T1]{fontenc}
\usepackage[utf8]{inputenc}
\usepackage{lmodern}
\usepackage[a4paper,margin=1in]{geometry}
\usepackage{amsmath,amssymb,amsthm,mathtools,mathrsfs}
\usepackage{microtype}
\usepackage[skip=0.45\baselineskip plus 2pt,indent=0pt]{parskip}
\usepackage{enumitem}
\usepackage{tikz}
\usetikzlibrary{arrows.meta,calc,positioning,decorations.pathreplacing}
\usepackage[colorlinks=true,linkcolor=blue,citecolor=blue,urlcolor=blue]{hyperref}
\usepackage{aliascnt}
\usepackage[nameinlink,noabbrev,capitalise]{cleveref}
\allowdisplaybreaks[2]
\newtheorem{theorem}{Theorem}[section]
\newaliascnt{proposition}{theorem}
\newtheorem{proposition}[proposition]{Proposition}
\aliascntresetthe{proposition}
\newaliascnt{lemma}{theorem}
\newtheorem{lemma}[lemma]{Lemma}
\aliascntresetthe{lemma}
\newaliascnt{fact}{theorem}
\newtheorem{fact}[fact]{Fact}
\aliascntresetthe{fact}
\crefname{fact}{Fact}{Facts}
\Crefname{fact}{Fact}{Facts}
\newaliascnt{corollary}{theorem}

\aliascntresetthe{corollary}
\newaliascnt{claim}{theorem}
\newtheorem{claim}[claim]{Claim}
\aliascntresetthe{claim}
\crefname{claim}{Claim}{Claims}
\Crefname{claim}{Claim}{Claims}
\newaliascnt{definition}{theorem}
\newtheorem{definition}[definition]{Definition}
\aliascntresetthe{definition}
\newaliascnt{remark}{theorem}

\aliascntresetthe{remark}
\numberwithin{equation}{section}
\newcommand{\N}{\mathbb N}
\newcommand{\Q}{\mathbb Q}
\newcommand{\E}{\mathbb E}
\newcommand{\Prb}{\mathbb P}
\newcommand{\one}{\mathbf 1}
\newcommand{\bx}{\boldsymbol x}
\newcommand{\by}{\boldsymbol y}
\newcommand{\bz}{\boldsymbol z}
\newcommand{\bw}{\boldsymbol w}
\newcommand{\Fu}{\mathcal F}
\newcommand{\Gu}{\mathcal G}
\newcommand{\Hu}{\mathcal H}
\newcommand{\Du}{\mathcal D}
\newcommand{\Lu}{\mathcal L}
\newcommand{\Pu}[1]{\Pi_{\mathrm u,\infty}^{(#1)}}
\newcommand{\Pufin}[1]{\Pi_{\mathrm u,\mathrm{fin}}^{(#1)}}
\newcommand{\Pord}[1]{\Pi_{#1,\infty}}
\newcommand{\rev}{\operatorname{rev}}
\newcommand{\Col}{\operatorname{Col}}
\newcommand{\dd}{\,\mathrm d}
\newcommand{\eps}{\varepsilon}
\newcommand{\ind}[1]{\mathbf1_{\{#1\}}}
\setlist[enumerate]{label=\textup{(\roman*)},itemsep=2pt,topsep=4pt,leftmargin=2em}
\hypersetup{pdftitle={On the structure of uniform Turan densities},pdfsubject={Finite palette attainment and intervals of uniform hypergraph Turan densities},pdfauthor={Heng Li and Xizhi Liu}}
\title{\Large\bfseries On the structure of uniform Tur\'an densities}
\author{%
Heng~Li%
\and
Xizhi~Liu%
}
\date{\today}
\begin{document}
\maketitle
\begin{abstract}
Motivated by parallel developments in ordinary and $\ell$-degree Tur\'an problems, we study the set of $(r-2)$-uniform Tur\'an densities of possibly infinite families of $r$-uniform hypergraphs. We prove that, for all sufficiently large $r$, these densities exhibit a phase transition at $4r^{-r}$, from a countable set of algebraic values below this threshold to the full interval $[4r^{-r},1]$. For every $r\ge3$, the densities below the threshold are exactly the finite-palette Lagrangians in that range, and each is realized by a finite forbidden family. Adapting a method of Pikhurko, we show that, for every $r\ge3$, the set of uniform densities omits continuum many of its limit points below the threshold and is therefore not closed.
\end{abstract}

\section{Introduction}\label{sec:intro}
For an integer $r\ge3$, an \emph{$r$-uniform hypergraph}, or \emph{$r$-graph}, $H$ consists of a nonempty finite vertex set $V(H)$ and a collection of $r$-element subsets of $V(H)$, called edges. We identify a hypergraph with its edge set, and hence write $|H|$ for the number of edges of $H$. For a finite set $V$, let $\binom Vr$ denote the family of its $r$-element subsets. If $G$ is an $(r-2)$-graph on $V$, let $\mathcal K_r(G)\coloneqq\{e\in\binom Vr\colon\binom e{r-2}\subseteq G\}$. Given $d\in[0,1]$ and $\eta>0$, an $r$-graph $H$ on $n$ vertices is \emph{$(d,\eta,r-2)$-dense} if
\[
|H\cap\mathcal K_r(G)|\ge d|\mathcal K_r(G)|-\eta n^r
\]
for every $(r-2)$-graph $G$ on $V(H)$. For a possibly infinite family $\Fu$ of $r$-graphs, its \emph{$(r-2)$-uniform Tur\'an density} $\pi_{r-2}(\Fu)$ is the supremum of the values $d$ such that, for every $\eta>0$ and $n_0\in\N$, there exists an $\Fu$-free $(d,\eta,r-2)$-dense $r$-graph with at least $n_0$ vertices. Each forbidden hypergraph is assumed to have an edge; the empty forbidden family is allowed. We write
\[
\begin{aligned}
\Pu r&\coloneqq\{\pi_{r-2}(\Fu)\colon\Fu\text{ is a possibly infinite family of }r\text{-graphs}\},\\
\text{and}\qquad\Pufin r&\coloneqq\{\pi_{r-2}(\Fu)\colon\Fu\text{ is a finite family of }r\text{-graphs}\}.
\end{aligned}
\]

The study of uniform Tur\'an problems for $3$-graphs was initiated by Erd\H{o}s and S\'os~\cite{erdossos}; see Reiher~\cite{reiher-survey} for a survey of uniform density notions and the associated extremal problems. Glebov, Kr\'al', and Volec~\cite{gkv} proved that the $3$-graph on 4 vertices with 3 edges has uniform density $1/4$, and Reiher, R\"odl, and Schacht~\cite{rrs-quarter} gave a proof using hypergraph regularity. Buci\'c, Cooper, Kr\'al', Mohr, and Munh\'a Correia~\cite{bckmc-cycles} determined the uniform densities of tight cycles of length at least 5. The value is $4/27$ when the length is not divisible by 3, and 0 otherwise. Recently, Kielak, Kr\'al', Lamaison, Liu, Shu, and Wu~\cite{kkllsw-tetrahedron} and, independently, Buci\'c~\cite{bucic-tetrahedron} proved that $K_4^{(3)}$ has uniform density $1/2$, answering the corresponding question of Erd\H{o}s and S\'os.

We are concerned with the structure of $\Pu r$. The first general restriction is a gap above zero. Reiher, R\"odl, and Schacht~\cite{rrs-zero} characterized the $3$-graphs of zero uniform Tur\'an density and proved that every positive value is at least $1/27$. Garbe, Kr\'al', and Lamaison~\cite{gkl-minimum} constructed $3$-graphs attaining this lower bound, and Lin, Wang, and Zhou~\cite{lwz-minimum} established the corresponding minimum $r^{-r}$ in every uniformity. Li, Liu, and Pikhurko~\cite[Theorem 1.1]{llp} proved that $[a,1]\subseteq\Pu3$ for some $a\in(0,1)$.

Parallel developments concern the sets of attainable densities in ordinary and minimum $\ell$-degree Tur\'an problems. Erd\H{o}s's theorem on complete multipartite hypergraphs~\cite[Theorem 1]{erdos-multipartite} implies that no ordinary Tur\'an density lies in $(0,r!/r^r)$. Liu and Pikhurko~\cite[Theorem 1.1]{liu-pikhurko-intervals} proved that, for every $r\ge3$, there is a $\delta_r>0$ such that every value in $[1-\delta_r,1]$ occurs as an ordinary Tur\'an density of a possibly infinite forbidden family, answering a question of Frankl, Peng, R\"odl, and Talbot~\cite[Question 6.1]{fprt}. Li, Liu, and Mubayi~\cite[Theorem 1.1]{ordinary} proved that every value in $[r!/r^r,1]$ occurs for all sufficiently large $r$. For minimum degree conditions, Mubayi and Zhao~\cite{mubayi-zhao} proved that codegree Tur\'an densities are dense in $[0,1]$, and Lo and Markstr\"om~\cite{lo-markstrom} extended this to minimum $\ell$-degree densities for every $r>\ell\ge2$. Li and Liu~\cite[Theorem 1.1]{li-liu-degree} proved that every value in $[0,1]$ is realized by a possibly infinite forbidden family whenever $r>\ell\ge2$ and $(r,\ell)\ne(3,2)$.

Lower-bound constructions can be described by palettes, which specify finitely many colors and admissible color patterns. Lamaison~\cite{lamaison} proved that, for every $3$-graph $H$, its uniform Tur\'an density is the supremum of the densities of finite palette constructions avoiding $H$. Thus palette constructions give arbitrarily sharp lower bounds for every forbidden $3$-graph. The theorem also reduces upper bounds to finite colorability questions, without requiring a new hypergraph regularity argument for each forbidden hypergraph. Lin, Sun, Wang, and Zhou~\cite{lswz} extended the characterization to arbitrary forbidden families and all uniformities. A complementary separation theorem uses maps preserving admissible color patterns to characterize when some hypergraph is colorable by one palette but not by another. For $3$-graphs, this theorem was proved independently by King, Piga, Sales, and Sch\"ulke~\cite[Lemma 4.1]{kpss} and by Kr\'al', Ku\v{c}er\'ak, Lamaison, and Tardos~\cite[Theorem 13]{kklt}; Lin, Sun, Wang, and Zhou~\cite[Theorem 1.6]{lswz} extended it to every uniformity.

For a nonempty finite set $C$, let $\mathbb{S}_C\coloneqq\{\bx\in\mathbb R_{\ge0}^{C}\colon\sum_{c\in C}x_c=1\}$ denote the \emph{probability simplex} on $C$. For a positive integer $m$, write $[m]\coloneqq\{1,\ldots,m\}$ and $\mathbb{S}_m\coloneqq\mathbb{S}_{[m]}$.

A \emph{finite $r$-palette} $P$ consists of a nonempty finite color set $C(P)$ and a set of \emph{admissible tuples} in $C(P)^r$. We identify $P$ with its set of admissible tuples, which may be empty, and hence write $|P|$ for the number of admissible tuples of $P$. Its polynomial and \emph{Lagrangian} are
\[
\Lambda_P(\bx)\coloneqq\sum_{(c_1,\ldots,c_r)\in P}\prod_{i=1}^r x_{c_i},
\quad\text{and}\quad
\lambda(P)\coloneqq\max_{\bx\in\mathbb{S}_{C(P)}}\Lambda_P(\bx).
\]
Put $\Du_r\coloneqq\{\lambda(P)\colon P\text{ is a finite }r\text{-palette}\}$. The main theorem determines $\Pu r$ in terms of $\Du_r$.

\begin{theorem}\label{thm:main}
For all sufficiently large $r$, $\Pu r=\bigl(\Du_r\cap[0,4r^{-r})\bigr)\cup[4r^{-r},1]$.
\end{theorem}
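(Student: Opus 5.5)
We prove the two inclusions separately, using the palette characterization of Lin, Sun, Wang, and Zhou~\cite{lswz} for arbitrary forbidden families in every uniformity. In its usable form it says that for every family $\Fu$ of $r$-graphs, $\pi_{r-2}(\Fu)$ is the supremum of $\lambda(P)$ over finite $r$-palettes $P$ such that no member of $\Fu$ is $P$-colorable. We take $\lambda(P)$ to be the Lagrangian above, normalized so that the complete palette gives $1$.

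\emph{Inclusion $\Pu r\subseteq(\Du_r\cap[0,4r^{-r}))\cup[4r^{-r},1]$.} Suppose $\pi_{r-2}(\Fu)<4r^{-r}$; we must show that the value lies in $\Du_r$, i.e.\ that the supremum is attained. Our plan is to prove a structural gap lemma: every finite palette $P$ with $\lambda(P)<4r^{-r}$ reduces to a palette of bounded complexity $C_r$. The reduction passes to an optimal support and deletes redundant colors, using that the separation theorem~\cite[Theorem 1.6]{lswz} lets us replace $P$ by any palette that colors the same hypergraphs. The threshold $4r^{-r}$ should correspond to the first density at which two ``independent'' admissible patterns can be combined, e.g.\ two disjoint copies of the minimal $r^{-r}$ palette of Lin, Wang, and Zhou~\cite{lwz-minimum} glued so that the densities interact. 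Below it, every optimal palette is forced to be essentially a single rigid pattern with few colors. Since there are finitely many palettes of complexity at most $C_r$, the supremum over $\Fu$-avoiding palettes below the threshold is a maximum. This also yields the claims that these values are algebraic and countable, and that a finite family realizes each: forbid the finitely many minimal hypergraphs separating the maximal palette from the larger ones, again via the separation theorem.

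\emph{Inclusion $\supseteq$.} For $d=\lambda(P)\in\Du_r\cap[0,4r^{-r})$, we take $\Fu$ to be all $r$-graphs not $P$-colorable. Then $\pi_{r-2}(\Fu)\ge\lambda(P)$. For the reverse bound we must show that every palette $Q$ whose colorable hypergraphs are all $P$-colorable has $\lambda(Q)\le\lambda(P)$. Via the separation theorem, $Q$ then admits a pattern-preserving map into (a blow-up of) $P$, and Lagrangians are monotone under such maps. For the interval $[4r^{-r},1]$ we follow the template of Li, Liu, and Mubayi~\cite{ordinary} and Li, Liu, and Pikhurko~\cite{llp}. First we build a one-parameter (or countable, dense) family of palettes $P_t$ whose Lagrangians vary continuously and cover $[4r^{-r},1]$. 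A natural choice starts from a two-part pattern of density $4r^{-r}$, then iteratively inserts recursive substructures and adjusts weights. Given target $d$, we pick a sequence of palettes $P_k$ with $\lambda(P_k)\uparrow d$ whose colorable classes are nested. Let $\Fu$ consist of all $r$-graphs not colorable by any $P_k$. Then $\pi_{r-2}(\Fu)\ge d$, and the upper bound $\pi_{r-2}(\Fu)\le d$ requires that any palette avoiding $\Fu$ maps into some $P_k$, i.e.\ a compactness argument over finite hypergraphs.

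\emph{Main obstacles.} The hardest part is this upper bound on the interval side: controlling all palettes whose colorable hypergraphs lie in $\bigcup_k\Col(P_k)$. It requires the $P_k$ to be rigid, in the sense that any palette admitting only $P_k$-colorable hypergraphs has Lagrangian at most that of $P_k$ up to $o(1)$. This is where ``sufficiently large $r$'' enters: we would use $r$ large to make the perturbation estimates, in the style of stability for Lagrangians, hold uniformly, since error terms of order $r^{-r}\cdot r^2$ must be dominated. The gap lemma below $4r^{-r}$ is the second difficulty. For it we would first try a direct case analysis of the optimal weight vector via the KKT conditions, $\partial_c\Lambda_P=r\lambda(P)$ on the support.
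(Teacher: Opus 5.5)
Your argument for $\Du_r\subseteq\Pu r$ is exactly \cref{lem:finite-values}: forbid everything outside $\Col(P)$, then use separation to get $Q\to P$ or $Q\to\rev(P)$. Both other steps, however, have genuine gaps.

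\textbf{The attainment step below $4r^{-r}$.} Your gap lemma is false. Palettes on at most $C_r$ colors have only finitely many Lagrangians. Passing to an optimal support, or to a palette with the same colorable class, preserves $\lambda$, since separation gives maps in both directions. So your lemma would make $\Du_r\cap[0,4r^{-r})$ finite. It is infinite:
\begin{itemize}
\item the $3$-palette $\{(z,b_i,b_j)\colon 1\le i<j\le m\}$ has Lagrangian $\frac2{27}(1-\frac1m)$ (\cref{fact:low-density-examples});
\item its lift has Lagrangian $2r^{-r}(1-\frac1m)$;
\item for large $r$, $\Du_r\cap[r^{-r},4r^{-r})$ is even dense in $[r^{-r},4r^{-r})$.
\end{itemize}
So the approximating $\Fu$-deficient palettes may need unboundedly many colors, and attainment cannot come from a finite search. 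Moreover, a reduction that only tracks $\lambda$ says nothing about whether the attaining palette is still $\Fu$-deficient.

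The threshold is also not a gluing phenomenon. We have $4r^{-r}=(2/r)^2(1/r)^{r-2}$, the Lagrangian of the single tuple $R_{p,q}$ with one repeated color. The missing idea is to use $R_{p,q}$ as a certificate:
\begin{itemize}
\item Since $\lambda(R_{p,q})>\pi_{r-2}(\Fu)$, some $F_{p,q}\in\Fu$ is $R_{p,q}$-colorable.
\item A tuple repeating a color at positions $p,q$ gives $R_{p,q}\to P$.
\item $\Col(R_{p,q})=\bigcup_m\Col(S_m^{p,q})$, and Ramsey turns a long ladder into some $S_m^{p,q}\to P$.
\item Hence every palette deficient for these $\binom r2$ graphs is rainbow with no $M$-ladder.
\end{itemize}
Then a genuine limit argument is needed (\cref{thm:compactness}). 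One represents the Hessian at optimal weightings by Gram vectors on a sphere and takes a weak limit of their weighted distributions; ladder exclusion forces this limit to be purely atomic. Sampling gives a countable palette attaining the limit, which is compressed to a finite $P$ with $P\to P_{n_j}$. That homomorphism is what transfers $\Fu$-deficiency.

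Finite-family realization is not needed for this statement. Your sketch of it is also unjustified, since infinitely many palettes exceed $\lambda(P)$; the paper uses a testability-based stability argument there.

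\textbf{The interval step for $[4r^{-r},1]$.} Realizing $d$ as a limit of increasing $\lambda(P_k)$, by forbidding everything outside $\bigcup_k\Col(P_k)$, fails in general, and not for lack of perturbation estimates. The ladder palettes satisfy $S_m^{p,q}\to S_{m+1}^{p,q}$ and $\lambda(S_m^{p,q})=r^{-r}$. Yet $\bigcup_m\Col(S_m^{p,q})=\Col(R_{p,q})$, so that forbidden family has density $4r^{-r}$. The paper instead uses decreasing projective sequences $P_{i+1}\to P_i$, whose limits are uniform densities by the Li--Liu--Pikhurko theorem (\cref{thm:projective}).

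You also give no construction covering the interval. The paper's key step is the $12$-uniform palette $\mathcal H_{n,P}$, built over a $3$-palette $P$ with a map to $P_*$.
\begin{itemize}
\item It preserves compatible homomorphisms.
\item The auxiliary weights are forced to be equal at every optimum, so $\lambda(\mathcal H_{n,P})=\Psi_n(\lambda(P))$ exactly, with $\Psi_n$ continuous and strictly increasing.
\item $\lambda(\mathcal H_{n,P})=12^{-12}(4+\lambda(P)n^{-8})+O(n^{-12})$.
\end{itemize}
Feeding in the $3$-uniform projective interval yields overlapping intervals $[\Psi_n(a),\Psi_n(b)]$ accumulating at $4\cdot12^{-12}$. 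These are lifted to $r\ge12$ and spread to $[4r^{-r},1]$ by $\star$- and $\diamond$-products with finite palettes.

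``Sufficiently large $r$'' enters only in that last step. It comes from the Li--Liu--Mubayi theorem $\Pord s=\{0\}\cup[s!/s^s,1]$, which makes $\Du_s$ dense in $[s^{-s},1]$ for $s\ge s_0$. It does not come from error terms of order $r^{-r}r^2$.
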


\emph{Remark.} GPT 6 pro produced a proof of the ordinary Tur\'an-density theorem for every uniformity at least 18. The proof is recorded in~\cite[Theorem 1.1]{ordinary-quantitative}. Combining this result with the 12-uniform construction and the propagation argument can improve the range of \cref{thm:main} to $r\ge30$.

For the uniformities covered by \cref{thm:main}, every value in $[4r^{-r},1]$ occurs, whereas the finite-attainment theorem below shows that the values below $4r^{-r}$ form a countable set of algebraic numbers. The set $\Pu r\cap[r^{-r},4r^{-r})$ is nevertheless dense in $[r^{-r},4r^{-r})$, as \cref{thm:nonclosed}(i) shows.

A palette is \emph{$\Fu$-deficient} if no member of $\Fu$ can be colored by it; the precise coloring convention is given in \cref{sec:palettes}.

\begin{theorem}\label{thm:attainment}
Let $r\ge3$. The following statements hold.
\begin{enumerate}
\item If $\Fu$ is a family of $r$-graphs with $\pi_{r-2}(\Fu)<4r^{-r}$, then there is a finite $\Fu$-deficient $r$-palette $P$ with $\pi_{r-2}(\Fu)=\lambda(P)$.
\item We have $\Pu r\cap[0,4r^{-r}]=\Pufin r\cap[0,4r^{-r}]=\Du_r\cap[0,4r^{-r}]$, and this common set is countable and consists of algebraic numbers.
\end{enumerate}
\end{theorem}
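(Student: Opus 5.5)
We derive (i) from the palette characterization of Lin, Sun, Wang, and Zhou~\cite{lswz}. It states that $\pi_{r-2}(\Fu)$ is the supremum of $\lambda(P)$ over finite $\Fu$-deficient $r$-palettes $P$. We then deduce (ii) by a finite-family approximation combined with a counting argument.

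For (i), set $\pi\coloneqq\pi_{r-2}(\Fu)<4r^{-r}$ and choose $\Fu$-deficient palettes $P_1,P_2,\dots$ with $\lambda(P_k)\to\pi$. The task is to show that the supremum is attained, and this is the main obstacle. The plan is a structural reduction that holds below $4r^{-r}$. Pass to an optimal weight vector $\bx^{(k)}$ and delete colors of zero weight; this does not change the Lagrangian and preserves deficiency. We expect that every palette with $\lambda(P)<4r^{-r}$ has a support that is sparse in a controlled way. The heuristic threshold comes from two palettes: a single admissible rainbow tuple on $r$ colors gives $r^{-r}$, and two tuples sharing $r-1$ colors (or an analogous doubling) give $4r^{-r}$.

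We would aim to prove a \emph{compression lemma}. It should say that if $\lambda(P)<4r^{-r}$ and $\bx$ is optimal with full support, then the colors can be merged or discarded, via a pattern-preserving map in the sense of the separation theorem~\cite[Theorem 1.6]{lswz}, to produce a palette $P'$ with at most $N(r)$ colors. This $P'$ must satisfy $\lambda(P')\ge\lambda(P)$ and remain $\Fu$-deficient. Deficiency is preserved because any coloring of $F\in\Fu$ by $P'$ lifts through the map to a coloring by $P$. The strategy for the lemma is to use the KKT conditions, under which each color's link polynomial equals $r\lambda$. These would show that two colors with nearly identical links can be identified without decreasing $\Lambda$. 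The bound $\lambda<4r^{-r}$ should then force only boundedly many link types.

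Granting the lemma, there are only finitely many palettes on at most $N(r)$ colors. Some fixed $P$ therefore occurs infinitely often among the compressed $P_k'$, and $\lambda(P)\ge\limsup\lambda(P_k)=\pi$. Since $P$ is $\Fu$-deficient, $\lambda(P)\le\pi$, which proves (i).

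For (ii), the inclusions $\Pufin r\subseteq\Pu r$ are trivial. By (i), together with a separate treatment of the endpoint $4r^{-r}$ (for instance by exhibiting it directly as $\lambda$ of the two-tuple palette), we obtain $\Pu r\cap[0,4r^{-r}]\subseteq\Du_r$. For the reverse inclusion, given a palette $P$ with $\lambda(P)\le4r^{-r}$, let $\Fu_P$ be the family of all $r$-graphs not colorable by $P$. Then $\pi_{r-2}(\Fu_P)=\lambda(P)$: the lower bound comes from the standard random palette construction, and the upper bound from the characterization, since any $\Fu_P$-deficient palette $Q$ maps to $P$ and hence satisfies $\lambda(Q)\le\lambda(P)$. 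This step is again the compression or separation argument. To pass to a finite family, we use compactness. The finitely many palettes $Q$ on at most $N(r)$ colors with $\lambda(Q)>\lambda(P)$ each color some $F_Q\in\Fu_P$, and we take $\{F_Q\}$. Palettes with more colors and Lagrangian in $(\lambda(P),4r^{-r})$ first compress to at most $N(r)$ colors, and palettes with Lagrangian at least $4r^{-r}$ require the lemma to extend up to the threshold, which is a point to check.

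Finally, $\Du_r$ is countable because there are countably many finite palettes. Each $\lambda(P)$ is algebraic because it is the maximum of a polynomial with integer coefficients over a simplex. By the Tarski--Seidenberg theorem, it is a real algebraic number: it is attained at a KKT point of some face, and the set of such critical values is semialgebraic over $\Q$ and finite.
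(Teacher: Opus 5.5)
\textbf{There is a genuine gap: the compression lemma is false.} No bound $N(r)$ depending only on $r$ can exist.

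Take $r=3$ and the star palette $P=\{(z,b_i,b_j)\colon 1\le i<j\le m\}$, which has $\lambda(P)=\frac{2}{27}(1-\frac1m)$ by \cref{fact:low-density-examples}. Let $\Fu$ consist of all $3$-graphs that are not $P$-colorable, so that $\pi_1(\Fu)=\lambda(P)$. By the separation theorem, every $\Fu$-deficient $Q$ maps to $P$ or to $\rev(P)$. If $Q$ has at most $m$ colors, its image is a proper induced subpalette, so $\lambda(Q)\le\frac{2}{27}(1-\frac1{m-1})<\lambda(P)$. Hence attaining $\pi_1(\Fu)$, or even approaching it by deficient compressed palettes $P_k'$ with $\lambda(P_k')\ge\lambda(P_k)$, needs more than $m$ colors, and $m$ is arbitrary. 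Applying $\operatorname{lift}_r$ gives the same phenomenon for every $r$. Equivalently, your lemma would make $\Du_r\cap[0,4r^{-r})$ finite, whereas it contains the distinct values $2(1-\frac1m)r^{-r}$.

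There is also a direction problem. Merging colors gives a map $P\to P'$, and colorings do not lift from $P'$ back to $P$. Only maps $P'\to P$ preserve deficiency, and those force $\lambda(P')\le\lambda(P)$.

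Moreover, $\lambda(P)<4r^{-r}$ imposes no bounded structure on a single palette: $S_m^{p,q}$ has Lagrangian $r^{-r}$ and contains an $m$-ladder. A minor point: $4r^{-r}$ is the value of one tuple with a repeated color, or of four permutations of one edge. Two tuples sharing $r-1$ colors give only $r^{-r}$.

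\textbf{What is missing.} The structural control has to be extracted from $\Fu$, and attainment has to come from compactness along the particular approximating sequence, not from a uniform size bound.
\begin{enumerate}
\item Since $\lambda(R_{p,q})=4r^{-r}>\pi_{r-2}(\Fu)$, some member of $\Fu$ is $R_{p,q}$-colorable for each pair $p,q$. Forbidding these finitely many members makes every deficient palette rainbow. Via Ramsey's theorem and the ladder palettes $S_m^{p,q}$, it also makes them free of $M$-ladders (\cref{lem:finite-certificate}).
\item \Cref{thm:compactness} then gives a finite $P$ with $\lambda(P)=\lim\lambda(P_n)$ and $P\to P_n$ along a subsequence, which yields deficiency. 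The size of $P$ depends on the sequence. The argument represents optimal weightings by Hessian vectors on a sphere and shows that the limit measure is atomic. This produces a countable palette attaining the limit, which ladder exclusion compresses to finitely many colors.
\item For (ii), your finite-family step relies on the same false lemma. As you note, it also does not control palettes of Lagrangian at least $4r^{-r}$. The paper instead proves $\Du_r\subseteq\Pufin r$ for every finite palette (\cref{thm:app-fr-realization}) via a local-to-global stability result, \cref{prop:app-fr-stability}. For a Lagrangian-minimal target $P$, any palette all of whose $k$-color induced subpalettes map to $P$ or $\rev(P)$ has Lagrangian at most $\lambda(P)$; this uses rigidity of blowups and hypergraph property testing. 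One then forbids a witness for each of the finitely many small palettes that map to neither.
\end{enumerate}
Your countability and algebraicity arguments are fine.
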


To obtain the finite-family conclusion, we extend the theorem of King, Piga, Sales, and Sch\"ulke~\cite[Theorem 1.1]{kpss} from $r=3$ to every $r\ge3$ by proving $\Du_r\subseteq\Pufin r$. The proof in \cref{app:finite-family-realization} follows their rigidity and counting argument, with the changes needed for general uniformity and tuples with repeated colors. The realizing family need not be a subfamily of $\Fu$. An equivalent polynomial characterization of the low-density set is given in \cref{sec:palettes}.

To obtain the upper interval in \cref{thm:main}, we first construct an interval with the critical left endpoint in uniformity 12, then lift it to every $r\ge12$.

\begin{theorem}\label{thm:seed}
There exists $\eps>0$ such that, for every $r\ge12$, we have $[4r^{-r},(4+\eps)r^{-r}]\subseteq\Pu r$.
\end{theorem}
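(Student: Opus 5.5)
The plan has two stages: first build an interval of attainable densities with left endpoint $4r^{-r}$ in uniformity $r=12$, then lift it to all $r\ge 12$ by a propagation argument.

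\textbf{Palette tools.} I would use the palette characterization of Lin, Sun, Wang, and Zhou~\cite{lswz}: for any family $\Fu$, $\pi_{r-2}(\Fu)$ is the supremum of $\lambda(P)$ over finite $\Fu$-deficient $r$-palettes $P$. Combined with the separation theorem~\cite[Theorem 1.6]{lswz}, this turns realizing a value $d$ into a combinatorial task. One needs a family $\mathcal P$ of finite palettes that is ``closed downward'' under pattern-preserving maps. Writing $\Fu$ for all $r$-graphs not colorable by any member of $\mathcal P$, one gets $\pi_{r-2}(\Fu)=\sup\{\lambda(Q)\colon Q\text{ maps into some }P\in\mathcal P\}$. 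For a target $d\in[4r^{-r},(4+\eps)r^{-r}]$ I would take $\mathcal P_d$ to be all palettes of a fixed structured shape whose Lagrangian is at most $d$. The remaining issue is that palettes mapping into members of $\mathcal P_d$ must not have larger Lagrangian.

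\textbf{Seed in uniformity 12.} The value $4r^{-r}$ is the Lagrangian of a simple palette, essentially four ``rainbow'' patterns on $r$ colors; compare the minimum $r^{-r}$ of~\cite{lwz-minimum}, which comes from a single rainbow tuple. I would construct a continuous one-parameter family of palettes $P_t$, $t\in[0,1]$. One option is to add to the base palette a growing set of extra tuples on a blown-up color set. Another is to use an infinite sequence of palettes with a tunable weight, as in the Pikhurko-type and Liu--Pikhurko interval constructions~\cite{liu-pikhurko-intervals}. The goal is that $t\mapsto\sup\lambda$ sweeps $[4r^{-r},(4+\eps)r^{-r}]$ continuously.

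The key step is a rigidity lemma. Any palette $Q$ admitting a pattern-preserving map into the members of $\mathcal P_d$ should have $\lambda(Q)\le d$. Informally, the forbidden family is rich enough that colorable palettes cannot ``fold'' to increase the Lagrangian. I would prove this by a local stability analysis of $\Lambda_Q$ near its maximizer. The uniformity 12 is needed so that the cross-term contributions of degenerate (repeated-color) tuples are small compared with the main $r^{-r}$-scale terms; this is a numerical inequality that holds once $r\ge12$.

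\textbf{Propagation to all $r\ge12$.} Given a seed family $\Fu$ of $12$-graphs, I would define $r$-graph families by an $r$-uniform ``extension'' that adds $r-12$ new parts in rainbow fashion. Equivalently, palettes are extended by appending $r-12$ fresh singleton colors, each used once per tuple. The Lagrangian then scales by the optimal $\bigl(12^{12}/r^r\bigr)$-type factor, mapping the interval $[4\cdot12^{-12},(4+\eps)12^{-12}]$ onto $[4r^{-r},(4+\eps)r^{-r}]$. The required inequality is that the optimum weight split gives factor $\frac{12^{12}}{r^r}$ exactly, from maximizing $x^{12}\prod y_i$ subject to a sum constraint. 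One must also check that extension commutes with deficiency, i.e.\ no better palette for the extended family arises. That reduces, via separation, to showing that optimal palettes for the extended family themselves decompose as extensions.

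\textbf{Main obstacle.} I expect the rigidity and upper bound in the seed construction to be hardest: showing that no palette colorable against the chosen family exceeds $d$, uniformly in $d$ near $4r^{-r}$. I would first try a symmetrization argument, merging twin colors to reduce to palettes on at most $r$ or so colors, followed by a direct case analysis of the resulting polynomials.
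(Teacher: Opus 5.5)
There is a genuine gap, and it is in your first step. If $\mathcal P$ is an infinite collection of finite palettes and $\Fu$ consists of the $r$-graphs colorable by no member of $\mathcal P$, then $Q$ is $\Fu$-deficient exactly when $\Col(Q)\subseteq\bigcup_{P\in\mathcal P}\Col(P)$. The separation theorem, \cref{thm:palette-tools}(ii), compares $Q$ with one target palette at a time. This inclusion therefore does not give a homomorphism from $Q$ to any single member of $\mathcal P$, and your identity $\pi_{r-2}(\Fu)=\sup\{\lambda(Q)\colon Q\to P\in\mathcal P\}$ is false in general, even for downward-closed $\mathcal P$. For example, the palettes $P_m$ of \cref{prop:countable-colorability} all have Lagrangian $2/9$. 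Yet every $3$-graph on $[m]$ is $P_m$-colorable (color each pair by itself), so your $\Fu$ is empty and has density $1$.

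The issue you call ``remaining'' is vacuous, since $Q\to P$ already forces $\lambda(Q)\le\lambda(P)$. Your planned rigidity lemma therefore targets the wrong statement. This is not a technicality. There are only countably many finite palettes, so all but countably many points of the target interval are not finite-palette Lagrangians; in particular, no continuous one-parameter family of finite palettes exists. Those points must be realized as limits, and \cref{thm:nonclosed}(iii) shows that limits of finite-palette Lagrangians can fail to be uniform densities.

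The missing idea is the projective-sequence realization theorem, \cref{thm:projective}: if $P_{i+1}\to P_i$ for all $i$, then $\lim_i\lambda(P_i)\in\Pu r$. The paper's seed construction is built to feed this theorem.
\begin{enumerate}
\item It starts from the projective realizations of an interval $[a,b]$ of $3$-uniform values in \cref{prop:input}, all mapping compatibly to a fixed palette $P_*$.
\item It builds a $12$-palette $\mathcal H_{n,P}$ that is functorial in the pair $(P,\theta\colon P\to P_*)$, so projective sequences are sent to projective sequences.
\item The hard analytic step, \cref{prop:exact-reader}, forces every optimal weighting to be constant on the auxiliary classes. Hence $\lambda(\mathcal H_{n,P})=\Psi_n(\lambda(P))$ for a continuous, strictly increasing $\Psi_n$ with $\Psi_n(x)=12^{-12}(4+xn^{-8})+O(n^{-12})$.
\item This gives $[\Psi_n(a),\Psi_n(b)]\subseteq\Pu{12}$. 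These intervals overlap for consecutive $n\in18\N$ and accumulate at $4\cdot12^{-12}$, which is the Lagrangian of a single tuple with one repeated color.
\end{enumerate}
Your proposal has no concrete seed family with this exactness and functoriality. Your explanation for uniformity $12$ is also not tied to any construction; in the paper, $12$ is simply the number of positions used ($8+3+1$ in the base part and $9+3$ in the additional part).

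Your lifting by $r-12$ fresh colors is the paper's $\operatorname{lift}_r$. You do not need to analyze deficiency of extended forbidden families, however. Lifting preserves homomorphisms, so the projective sequences lift and \cref{thm:projective} applies directly in every uniformity $r\ge12$.
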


Every point of this interval is the limit of the Lagrangians of finite $r$-palettes $P_i$ with homomorphisms $P_{i+1}\to P_i$. These homomorphisms provide the compatibility needed to realize the limiting Lagrangians as uniform densities.

We then use products with finite palettes to extend this endpoint interval to $[4r^{-r},1]$ for all sufficiently large $r$. Approximating the ordinary densities supplied by Li, Liu, and Mubayi~\cite[Theorem 1.1]{ordinary} by finite hypergraph Lagrangians gives the values needed for this propagation; see \cref{sec:interval}.

The next theorem describes the closure of $\Pu r$ and locates its missing limit points.

\begin{theorem}\label{thm:nonclosed}
The following statements hold.
\begin{enumerate}
\item For all sufficiently large $r$, $\overline{\Pu r}=\overline{\Du_r}=\{0\}\cup[r^{-r},1]$, and $\Pu r\cap[r^{-r},4r^{-r})$ is countable and dense in that interval.
\item For every $r\ge3$, the set $\Pu r$ contains no nondegenerate interval contained in $[0,4r^{-r})$.
\item For every $r\ge3$, $\left|\left(\overline{\Pu r}\setminus\Pu r\right)\cap[\frac{54}{25}r^{-r},\frac{9}{4}r^{-r}]\right|=2^{\aleph_0}$. In particular, $\Pu r$ is not closed.
\end{enumerate}
\end{theorem}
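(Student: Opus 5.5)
We prove the three parts separately, taking \cref{thm:main} and \cref{thm:attainment} as given.

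For (i), we first show $\overline{\Du_r}=\{0\}\cup[r^{-r},1]$. The inclusion ``$\subseteq$'' combines two facts. Every value in $\Du_r\cap(0,4r^{-r})$ lies in $\Pu r$ by \cref{thm:attainment}(ii), hence is at least $r^{-r}$ by the minimum-positive-density theorem of Lin, Wang, and Zhou. Values of $\Du_r$ in $[4r^{-r},1]$ trivially lie in $[r^{-r},1]$. For ``$\supseteq$'' we need $\Du_r$ to be dense in $[r^{-r},4r^{-r}]$. We would realise this with explicit palettes. One family is the rainbow palette on $r$ colors (all permutations of $(1,\dots,r)$), whose Lagrangian is $r!\,r^{-r}$; we then pass to palettes whose admissible sets are unions over several blocks with varying weights. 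The cleanest route is probably a one-parameter family: take the palette admitting tuples $(c_1,\dots,c_r)$ with $c_i=i$ for $i\le r-1$ and $c_r\in\{r,r+1,\dots,r+k\}$, together with a second orbit. We then compute the Lagrangians, which are of the form $\max_x$ of an explicit polynomial. Letting $k$ and the number of blocks vary, these values should fill $[r^{-r},4r^{-r}]$ densely. (I am uncertain of the exact convention; I would check against the palette section.)

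Given this density, $\overline{\Pu r}=\overline{\Du_r}$ follows. Indeed, $\Pu r\subseteq \{0\}\cup[r^{-r},1]$ by the gap theorem. Conversely, $\Pu r$ contains $\Du_r\cap[0,4r^{-r})$ and $[4r^{-r},1]$ by \cref{thm:main}. Countability of $\Pu r\cap[r^{-r},4r^{-r})$ follows from \cref{thm:attainment}(ii).

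Part (ii) is immediate from \cref{thm:attainment}(ii). Any subset of $[0,4r^{-r})$ contained in $\Pu r$ lies in the countable set $\Du_r$, and a nondegenerate interval is uncountable.

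Part (iii) is the main obstacle. We follow Pikhurko's method. We construct, for each infinite $0/1$ sequence $\sigma$, a nested sequence of finite palettes $P_1^\sigma\leftarrow P_2^\sigma\leftarrow\cdots$ with homomorphisms. Their Lagrangians should decrease to a limit $\ell_\sigma\in[\tfrac{54}{25}r^{-r},\tfrac94 r^{-r}]$, with $\sigma\mapsto\ell_\sigma$ injective. A natural seed lives near the value $\tfrac94 r^{-r}$; I would try to identify it as the Lagrangian of a small palette, perhaps one obtained by blowing up a 2-color pattern in two coordinates, which gives factors like $(3/2)^2$. Each $\ell_\sigma$ is a limit of elements of $\Du_r$, so it lies in $\overline{\Pu r}$.

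The key step is to show that $\ell_\sigma\notin\Du_r$ for all but countably many $\sigma$; by \cref{thm:attainment}(ii) this gives $\ell_\sigma\notin\Pu r$. Countability of $\Du_r$ alone does this, provided $\sigma\mapsto\ell_\sigma$ is injective, since continuum many distinct limits cannot all lie in a countable set. So the real work is twofold. First, we must build a Cantor-type scheme in which choice $\sigma_i$ perturbs the Lagrangian by amounts $\delta_i$ that decay fast enough, say $\delta_{i+1}<\delta_i/3$. Second, we must prove that the Lagrangian of the refined palette is controlled to within $o(\delta_i)$, so that distinct sequences give distinct limits. Establishing this quantitative Lagrangian control is the delicate step. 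We would handle it by a stability analysis of the optimal weight vector, which we would arrange to be unique and in the interior, for instance via a nondegenerate Hessian argument. All the limits lie in $[\tfrac{54}{25}r^{-r},\tfrac94r^{-r}]$, which yields continuum many missing limit points there, so $\Pu r$ is not closed.
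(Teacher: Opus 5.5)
\textbf{Part (ii).} Your argument for (ii) is correct and is the paper's argument.

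\textbf{Part (iii).} Your final step for (iii) is valid: discard the countable set $\Du_r$ and invoke \cref{thm:attainment}(ii). It substitutes for the paper's transcendence argument. But the scheme you set up for (iii) cannot work, because you require homomorphisms $P_{i+1}^\sigma\to P_i^\sigma$, i.e.\ projective sequences. By \cref{thm:projective}, the limit of the Lagrangians along any projective sequence already lies in $\Pu r$, so no $\ell_\sigma$ could be a missing limit point. More sharply, \cref{prop:projective-stabilizes} shows that a projective sequence with limit below $4r^{-r}$ has eventually constant Lagrangians. Every $\ell_\sigma$ would then lie in the countable set $\Du_r$, so $\sigma\mapsto\ell_\sigma$ could not be injective on a continuum.

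The missing points must be limits of approximating palettes with no compatibility maps. Even after dropping the homomorphisms, you leave both the construction and the quantitative separation of the limits undone. The paper does not build such a scheme by hand. It proceeds as follows:
\begin{enumerate}
\item It takes Pikhurko's continuum of pairwise distinct ordinary densities $\tau_A$.
\item It localizes them to $[12/25,1/2]$, using the Liu--Mubayi non-jump $4/9<12/25$ with $m=5$ together with an AM--GM bound $\tau_A\le(m-2)/(m+1)$.
\item It approximates each $\tau_A$ by normalized Lagrangians $\lambda(H_n)$; these need not be compatible.
\item It passes to palettes via $\lambda(P_1(H_n))=\lambda(H_n)/6$ and lifts with $\lambda(\operatorname{lift}_r(P))=27r^{-r}\lambda(P)$.
\end{enumerate}
This is exactly where $\frac{54}{25}=27\cdot\frac16\cdot\frac{12}{25}$ and $\frac94=27\cdot\frac16\cdot\frac12$ come from.

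\textbf{Part (i).} The density of $\Du_r$ in $[r^{-r},4r^{-r}]$ is essentially the entire content of (i), and you do not prove it. The families you name do not reach this range:
\begin{enumerate}
\item All permutations of one rainbow tuple give $r!\,r^{-r}\ge6r^{-r}$. Indeed, \cref{fact:low-density-multiplicity} allows at most $3$ permutations per edge below $4r^{-r}$.
\item The tuples $(1,\ldots,r-1,c_r)$ with $c_r\in\{r,\ldots,r+k\}$ form a blowup of a single tuple, so their Lagrangian is exactly $r^{-r}$ for every $k$.
\item The ``second orbit'' is unspecified.
\end{enumerate}

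Your density argument also never uses that $r$ is large, and it must. For $r=3$, every value of $\Du_3$ in $(1/27,2/27)$ equals $\lambda(H)/6$ for a simple $3$-graph $H$, since an edge with two admissible permutations already forces $\lambda\ge2/27$. Density there would therefore make $2/9$ a non-jump for $3$-graphs, settling a well-known open question of Erd\H{o}s.

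The paper obtains the density from three ingredients (\cref{lem:dense-finite-values}):
\begin{enumerate}
\item the Li--Liu--Mubayi theorem $\Pord s=\{0\}\cup[s!/s^s,1]$ for $s\ge s_0$;
\item the density of normalized Lagrangians in $\Pord s$;
\item the identity $\lambda(P_m(H))=\frac{m}{s!}\lambda(H)$.
\end{enumerate}
Already the case $m=1$ covers $[r^{-r},1/r!]\supseteq[r^{-r},4r^{-r}]$.

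Finally, $\overline{\Du_r}\supseteq[4r^{-r},1]$ does not follow from \cref{thm:main}, since $\Du_r$ is countable. You need, for instance, $\Pu r\subseteq\overline{\Du_r}$ from \cref{thm:palette-tools}(i), or \cref{lem:dense-finite-values} again.
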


In contrast, Pikhurko~\cite[Proposition 1 and Theorem 2]{pikhurko} proved that the set of ordinary Tur\'an densities of possibly infinite forbidden families is closed and has cardinality continuum. We localize his uncountability construction using the non-jump $4/9$ of Liu and Mubayi~\cite{liumubayi} to obtain continuum many limit points of finite-palette Lagrangians in the interval specified in \cref{thm:nonclosed}(iii). All but countably many of these points are transcendental, so finite attainment excludes them from $\Pu r$.

The difficulty in finite attainment is that approximating palettes may have arbitrarily many colors. If $\pi_{r-2}(\Fu)<4r^{-r}$, a finite subfamily of $\Fu$ excludes repeated-color tuples and ladders of a fixed length from every palette deficient for that subfamily. This obstruction gives the compactness needed to construct a countable palette attaining the limiting Lagrangian. We then find a finite subpalette with the same value, retaining the colorability restrictions imposed by the original forbidden family.

The interval construction follows the same basic strategy as the proof of Li, Liu, and Mubayi~\cite{ordinary}. Auxiliary $3$-graphs control the maximizing weights, and an additional term produces overlapping intervals approaching the target endpoint. The main difference is that convergence of finite-palette Lagrangians alone does not guarantee realization as a uniform density. We therefore start with the interval of $3$-uniform palette limits of Li, Liu, and Pikhurko~\cite{llp} and preserve the homomorphisms between successive palettes required by their realization theorem~\cite[Proposition 3.1 and Section 6]{llp}. Whereas Li, Liu, and Mubayi allow small deviations among the products of auxiliary weights appearing in the additional term, the $12$-uniform construction forces the auxiliary colors in each class to have equal weights at every optimum. The resulting maximum depends only on the Lagrangian of the original palette and produces intervals approaching $4\cdot12^{-12}$.

\Cref{sec:palettes} collects the basic facts about palette Lagrangians and colorability. \Cref{sec:compactness} proves finite attainment, and \cref{sec:low-topology} proves nonclosedness and gives low-density examples. \Cref{sec:compiler,sec:interval} construct and propagate intervals of uniform densities to prove \cref{thm:main}. \Cref{sec:concluding} records results for the other uniform density levels. \Cref{app:finite-family-realization} proves finite-family realization in every uniformity and its stability and extremal consequences for palettes. \Cref{app:countable-colorability} gives a countable palette whose Lagrangian is smaller than the uniform density of the hypergraphs it colors. \Cref{sec:products} proves product formulas for uniform densities and a stabilization result for sequences of palettes linked by homomorphisms.

\section{Palettes and Lagrangians}\label{sec:palettes}
By homogeneity, the maximum of $\Lambda_P$ over nonnegative weightings of total weight $r$ is $r^r\lambda(P)$, which we call the \emph{relative Lagrangian} of $P$.
A palette consisting of one tuple of $r$ distinct colors has relative Lagrangian 1, the minimum positive value. For a simple $r$-graph $H$, its edge polynomial is $\sum_{e\in H}\prod_{v\in e}x_v$.

Given a coloring $\chi\colon\binom{V(H)}{r-1}\to C(P)$ of an ordered $r$-graph $H$, we list the face colors of an edge $e=\{v_1<\cdots<v_r\}$ in the order $\bigl(\chi(e\setminus\{v_1\}),\ldots,\chi(e\setminus\{v_r\})\bigr)$.
An ordered $r$-graph is \emph{$P$-colorable} if a coloring of all its $(r-1)$-subsets by $C(P)$ makes every edge admissible in this order. An unordered $r$-graph is $P$-colorable if some vertex order has such a coloring. We write
\[
\Col(P)\coloneqq\{H\colon H\text{ is an unordered }P\text{-colorable }r\text{-graph}\}.
\]

A \emph{subpalette} of $P$ is obtained by deleting colors and admissible tuples. For a nonempty set $J\subseteq C(P)$, the \emph{induced subpalette} on $J$ is $P[J]\coloneqq P\cap J^r$.

We call a map $\theta\colon C(P)\to C(Q)$ a \emph{homomorphism} from $P$ to $Q$, and write $\theta\colon P\to Q$, if $(\theta(c_1),\ldots,\theta(c_r))\in Q$ for every $(c_1,\ldots,c_r)\in P$. The \emph{reverse palette} $\rev(P)$ is defined by $C(\rev(P))\coloneqq C(P)$ and $\rev(P)\coloneqq\{(c_r,\ldots,c_1)\colon(c_1,\ldots,c_r)\in P\}$. If $P\to Q$, then $\Col(P)\subseteq\Col(Q)$ and $\lambda(P)\le\lambda(Q)$. Moreover, we have $\Col(P)=\Col(\rev(P))$ and $\lambda(P)=\lambda(\rev(P))$. For the Lagrangian inequality, take $\bx\in\mathbb{S}_{C(P)}$ and put $y_d\coloneqq\sum_{c\in C(P)\colon\theta(c)=d}x_c$ for each $d\in C(Q)$. Then $\by\in\mathbb{S}_{C(Q)}$ and $\Lambda_P(\bx)\le\Lambda_Q(\by)$, since the expansion of each target monomial contains all source monomials mapped to it.

The following characterization and separation theorem allows us to work with finite palettes.
\begin{theorem}[{\cite[Theorems 1.4 and 1.6]{lswz}}]\label{thm:palette-tools}
The following statements hold.
\begin{enumerate}
\item For any family $\Fu$ of $r$-graphs, $\pi_{r-2}(\Fu)=\sup\{\lambda(P)\colon P\text{ is a finite }\Fu\text{-deficient }r\text{-palette}\}$.
\item For finite $r$-palettes $P,Q$, there is an $r$-graph in $\Col(P)\setminus\Col(Q)$ if and only if neither $P\to Q$ nor $P\to\rev(Q)$.
\end{enumerate}
\end{theorem}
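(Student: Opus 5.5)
The statement just given is \cref{thm:palette-tools}, quoted from Lin, Sun, Wang, and Zhou. Within this paper it is used as a black box, so the plan is to check that our conventions match theirs and to outline why each part holds.

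\emph{Part (i).} For the lower bound, fix a finite $\Fu$-deficient palette $P$ with optimal weights $\bx$. We build the standard random construction. Take $n$ vertices in a random order, and color each $(r-1)$-set independently with color $c$ with probability $x_c$. Keep an edge exactly when its face-color tuple, read in the vertex order as specified above, is admissible in $P$.
\begin{itemize}
\item Any copy of a member of $\Fu$ in this $r$-graph would give a $P$-coloring of that member, so the construction is $\Fu$-free.
\item For a fixed $r$-set, the $r$ faces are distinct $(r-1)$-sets and so receive independent colors. Hence each edge is kept with probability exactly $\Lambda_P(\bx)=\lambda(P)$.
\item Concentration, via the standard union bound over $(r-2)$-graphs $G$ with respect to $\mathcal K_r(G)$, gives $(\lambda(P),\eta,r-2)$-density with high probability.
\end{itemize}

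For the upper bound, take an $\Fu$-free $(d,\eta,r-2)$-dense $r$-graph. Apply hypergraph regularity at the level of $(r-1)$-sets relative to $(r-2)$-sets. The reduced structure at a typical $(r-2)$-level "vertex" yields a finite palette: its colors are the $(r-1)$-level cells, and its admissible tuples are the dense $r$-level polyads. Its Lagrangian is at least $d-o(1)$ by averaging. A counting lemma then embeds any member of $\Fu$ colorable by this palette, so the palette must be $\Fu$-deficient.

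\emph{Part (ii).} If $P\to Q$ or $P\to\rev(Q)$, then $\Col(P)\subseteq\Col(Q)$ by the facts recorded above. Conversely, suppose there is no such homomorphism. We construct a "universal" $P$-colorable ordered $r$-graph $H$ that is rich enough that any $Q$-coloring of it induces a homomorphism $P\to Q$ or $P\to\rev(Q)$. The reversal case comes from unordering, since a $Q$-coloring of $H$ may use a different vertex order. The construction proceeds in three steps:
\begin{enumerate}
\item Take a large $P$-colored random ordered $r$-graph, as in the lower-bound construction.
\item Apply a Ramsey/canonization argument, using the hypergraph Ramsey theorem on ordered sets, to extract a large substructure.
\item On this substructure, any competing vertex order is monotone or anti-monotone, and the $Q$-coloring depends only on the $P$-color.
\end{enumerate}
This yields the map $\theta$.

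The main obstacle is the upper bound in (i), namely the regularity and counting step. In our setting we simply invoke \cite{lswz} for it, and our own work reduces to verifying that the ordering convention for face colors agrees with theirs.
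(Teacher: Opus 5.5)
Your proposal is correct. Like the paper, it imports both parts from \cite{lswz}; the difference is in the one step the paper actually argues.

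The cited characterization is stated for the uniform-weighting density $|P|/|C(P)|^r$, not for $\lambda(P)$. The paper bridges this with clones. For a rational weighting $x_c=n_c/N$, it replaces each color $c$ by $n_c$ clones. The resulting palette has uniform density $\Lambda_P(\bx)$ and maps to $P$, so it stays $\Fu$-deficient. Arbitrary weightings are then approximated by rational ones.

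You do not flag this issue. You say the only thing to check is the face-order convention, which is in fact immaterial: $\Col(P)=\Col(\rev(P))$, $\lambda(P)=\lambda(\rev(P))$, and the condition in (ii) is unchanged when both palettes are reversed. Your argument handles the weighting issue anyway. Your weighted random coloring proves $\pi_{r-2}(\Fu)\ge\lambda(P)$ directly, and the cited upper bound transfers because a palette's uniform density never exceeds its Lagrangian.

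So you replace the paper's combinatorial clone reduction with a self-contained probabilistic lower bound. This costs you a concentration estimate: bounded differences give failure probability $\exp(-\Omega(\eta^2n^{r-1}))$ for each $(r-2)$-graph $G$, which beats the $2^{\binom n{r-2}}$ choices of $G$. The paper's reduction instead reuses the cited lower bound and needs no probability.

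Your outlines of the cited upper bound and of (ii) are not load-bearing, since both are quoted, but they would not work as written.
\begin{enumerate}
\item The standard regularity argument also needs a Ramsey homogenization over the partition classes. This ensures a single palette governs all the $r$-tuples of classes used when embedding a member of $\Fu$; without it, a palette read off at one location cannot embed $F$.
\item In (ii), applying the classical Ramsey theorem to a random $P$-coloring only produces sets on which the $P$-coloring itself is constant. Making the $Q$-color a function of the $P$-color while retaining every tuple of $P$ requires a structural Ramsey argument.
\item Removing the vertex order is the separate Erd\H{o}s--Szekeres step reproduced in \cref{lem:order-amplification}.
\end{enumerate}
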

The cited characterization uses the uniform weighting on palette colors. To obtain the weighted formulation, first consider a rational probability weighting $x_c=n_c/N$ and replace each color $c$ by $n_c$ clones, declaring a tuple admissible exactly when its projection belongs to $P$. The resulting palette has density $\Lambda_P(\bx)$ under the uniform weighting and maps to $P$, so it preserves every deficiency condition of $P$. Approximating arbitrary probability weightings by rational ones proves~\cref{thm:palette-tools}(i).

\begin{fact}\label{lem:finite-values}
We have $\Du_r\subseteq\Pu r\subseteq\{0\}\cup[r^{-r},1]$.
\end{fact}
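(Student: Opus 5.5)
We prove the two inclusions separately. Both rest on \cref{thm:palette-tools}(i) and on the lower bound $r^{-r}$ for positive values established by Lin, Wang, and Zhou~\cite{lwz-minimum}.

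For $\Du_r\subseteq\Pu r$, fix a finite $r$-palette $P$. We take $\Fu$ to be the family of all $r$-graphs that are not $P$-colorable, that is, the complement of $\Col(P)$ among $r$-graphs with at least one edge. Then $P$ itself is $\Fu$-deficient, so \cref{thm:palette-tools}(i) gives $\pi_{r-2}(\Fu)\ge\lambda(P)$.

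For the reverse inequality, let $Q$ be any finite $\Fu$-deficient palette. No $Q$-colorable graph lies in $\Fu$, so $\Col(Q)\subseteq\Col(P)$. By \cref{thm:palette-tools}(ii), applied with the roles $(Q,P)$, either $Q\to P$ or $Q\to\rev(P)$. In both cases the homomorphism inequality recorded above, together with $\lambda(\rev(P))=\lambda(P)$, gives $\lambda(Q)\le\lambda(P)$. Taking the supremum over $Q$ yields $\pi_{r-2}(\Fu)=\lambda(P)$.

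One degenerate case needs a check: if $P$ is empty, then $\Col(P)$ contains no graph with an edge. In that case $\Fu$ is the family of all $r$-graphs, and the value is $0=\lambda(P)$, which is consistent.

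For $\Pu r\subseteq\{0\}\cup[r^{-r},1]$, the upper bound $1$ is immediate, since $\Lambda_P(\bx)\le(\sum x_c)^r=1$ for every palette. Now suppose $\pi_{r-2}(\Fu)>0$. By \cref{thm:palette-tools}(i) there is a finite $\Fu$-deficient palette $Q$ with $\lambda(Q)>0$, so $Q$ contains some admissible tuple $(c_1,\ldots,c_r)$. We want to show $\pi_{r-2}(\Fu)\ge r^{-r}$. The natural route is to exhibit a palette with Lagrangian at least $r^{-r}$ that colors nothing outside $\Col(Q)$, and hence is also $\Fu$-deficient.

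If the $c_i$ are distinct, the single-tuple subpalette $\{(c_1,\ldots,c_r)\}$ works. It is $\Fu$-deficient because it is a subpalette of $Q$, and uniform weights on its $r$ colors give Lagrangian $r^{-r}$. If colors repeat, the single-tuple subpalette has even larger Lagrangian. For example, with $k$ distinct colors the monomial $\prod_j x_{d_j}^{m_j}$ has maximum $\prod_j(m_j/r)^{m_j}\ge r^{-r}$. It is still a subpalette of $Q$, hence $\Fu$-deficient.

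So in every case $\pi_{r-2}(\Fu)\ge r^{-r}$, and this argument avoids invoking~\cite{lwz-minimum} at all. The only step I would double-check is that subpalettes preserve deficiency. This holds because the inclusion map is a homomorphism, so $\Col$ can only shrink.

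The main subtlety overall is the first inclusion's use of the separation theorem in the correct direction, and accounting for reversal. That is exactly why $\Col$ is defined with an arbitrary vertex order.
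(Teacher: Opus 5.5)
Your proof is correct and follows the paper's argument: the canonical family of non-$P$-colorable $r$-graphs together with \cref{thm:palette-tools}(i) and (ii) gives the first inclusion, and the single-monomial bound $\prod_j(m_j/r)^{m_j}\ge r^{-r}$ together with $\Lambda_P(\bx)\le(\sum_c x_c)^r=1$ gives the second. The detour through the single-tuple subpalette is harmless but unnecessary, since $\Lambda_Q$ already dominates that monomial and so $\lambda(Q)\ge r^{-r}$ directly; as you note yourself, no appeal to Lin--Wang--Zhou is needed, so your opening sentence claiming reliance on it should be dropped.
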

\begin{proof}
For a finite palette $P$, let $\Fu_P$ consist of all finite $r$-graphs outside $\Col(P)$. The palette $P$ is $\Fu_P$-deficient. If $Q$ is also $\Fu_P$-deficient, then $\Col(Q)\subseteq\Col(P)$, since any $Q$-colorable $r$-graph outside $\Col(P)$ would belong to $\Fu_P$. Hence \cref{thm:palette-tools}(ii) gives $Q\to P$ or $Q\to\rev(P)$, so $\lambda(Q)\le\lambda(P)$. \Cref{thm:palette-tools}(i) proves $\pi_{r-2}(\Fu_P)=\lambda(P)$.

To prove the second inclusion, suppose that $P\ne\varnothing$ and choose a tuple whose distinct colors have multiplicities $a_1,\ldots,a_t$. The polynomial $\Lambda_P$ is at least the corresponding monomial. Maximizing that monomial gives
\begin{equation}\label{eq:single-tuple}
\lambda(P)\ge\prod_{i=1}^t\left(\frac{a_i}{r}\right)^{a_i}
=r^{-r}\prod_{i=1}^ta_i^{a_i}\ge r^{-r}.
\end{equation}
Since $P\subseteq C(P)^r$, we have $\Lambda_P(\bx)\le(\sum_{c\in C(P)}x_c)^r=1$ for every $\bx\in\mathbb{S}_{C(P)}$, so $\lambda(P)\le1$. Thus every finite palette has Lagrangian either zero or in $[r^{-r},1]$. By \cref{thm:palette-tools}(i), $\pi_{r-2}(\Fu)$ is the supremum of the Lagrangians of finite $\Fu$-deficient palettes. If this supremum is positive, at least one such palette has positive Lagrangian, and hence $r^{-r}\le\pi_{r-2}(\Fu)\le1$.
\end{proof}

For an $s$-graph $H$, its \emph{Lagrangian} is $\lambda(H)\coloneqq s!\max_{\bx\in\mathbb{S}_{V(H)}}\sum_{e\in H}\prod_{v\in e}x_v$. Write $\Pord s$ for the set of ordinary Tur\'an densities of possibly infinite forbidden families. The normalized Lagrangians $\lambda(H)$ belong to $\Pord s$ and are dense in it, by Brown and Simonovits~\cite{bs}; see also Pikhurko~\cite[Theorem 24]{pikhurko}.

For $r\ge k\ge3$ and a finite $k$-palette $P$, choose distinct new colors $z_1,\ldots,z_{r-k}$. Define $\operatorname{lift}_r(P)$ to have color set $C(\operatorname{lift}_r(P))\coloneqq C(P)\sqcup\{z_1,\ldots,z_{r-k}\}$. Its admissible tuples are
\[
\operatorname{lift}_r(P)\coloneqq\{(c_1,\ldots,c_k,z_1,\ldots,z_{r-k})\colon(c_1,\ldots,c_k)\in P\}.
\]

We will use the following two elementary conversions.
\begin{fact}\label{lem:orientation-lift}
The following statements hold.
\begin{enumerate}[beginpenalty=10000]
\item If $H$ is a finite simple $k$-graph and $1\le m\le k!$, selecting exactly $m$ permutations of each edge gives a $k$-palette $P_m(H)$ with $\lambda(P_m(H))=\frac{m}{k!}\lambda(H)$.
\item If $r\ge k\ge3$ and $P$ is a finite $k$-palette, then $\lambda(\operatorname{lift}_r(P))=\frac{k^k}{r^r}\lambda(P)$. The lifting operation preserves palette homomorphisms.
\end{enumerate}
\end{fact}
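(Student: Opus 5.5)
Both parts reduce to evaluating the palette polynomials directly, so the plan is to compute $\Lambda$ explicitly and compare maxima over the relevant simplices.

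For (i), I would write $H$ on vertex set $V$ and take $C(P_m(H))\coloneqq V$. For each edge $e=\{v_1,\ldots,v_k\}$ I fix a set $S_e$ of exactly $m$ bijections $\sigma\colon[k]\to e$ and declare admissible the tuples $(\sigma(1),\ldots,\sigma(k))$ with $\sigma\in S_e$. Since each edge has $k$ distinct vertices, each such tuple has distinct entries. Distinct edges give disjoint sets of tuples, because the underlying set of a tuple recovers the edge, and distinct permutations give distinct tuples. Each admissible tuple from $e$ contributes the monomial $\prod_{v\in e}x_v$, so
\[
\Lambda_{P_m(H)}(\bx)=m\sum_{e\in H}\prod_{v\in e}x_v .
\]
Maximizing over $\mathbb S_V$ and recalling that $\lambda(H)$ carries the normalization $k!$ gives $\lambda(P_m(H))=\frac{m}{k!}\lambda(H)$.

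For (ii), let $Q\coloneqq\operatorname{lift}_r(P)$. The new colors are distinct from each other and from $C(P)$, so
\[
\Lambda_Q(\bx,\bz)=\Lambda_P(\bx)\prod_{j=1}^{r-k}z_j ,
\]
where $\bx$ denotes the weights on $C(P)$. I write $s\coloneqq\sum_c x_c$. By homogeneity of degree $k$, $\max\Lambda_P(\bx)$ subject to total weight $s$ equals $s^k\lambda(P)$. Subject to $\sum z_j=1-s$, AM--GM gives $\max\prod_j z_j=\bigl((1-s)/(r-k)\bigr)^{r-k}$. It remains to maximize
\[
s^k(1-s)^{r-k}(r-k)^{-(r-k)}
\]
over $s\in[0,1]$. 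The maximum is at $s=k/r$, with value
\[
\frac{k^k(r-k)^{r-k}}{r^r(r-k)^{r-k}}=\frac{k^k}{r^r}.
\]
Hence $\lambda(Q)=\frac{k^k}{r^r}\lambda(P)$. This also covers $P=\varnothing$ and the degenerate case $r=k$. The only care needed is the decoupling: the $\bx$-part and the $\bz$-part are maximized independently once $s$ is fixed.

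For the homomorphism claim, given $\theta\colon P\to P'$, I extend it by $z_j\mapsto z'_j$. Each admissible tuple $(c_1,\ldots,c_k,z_1,\ldots,z_{r-k})$ then maps to $(\theta(c_1),\ldots,\theta(c_k),z'_1,\ldots,z'_{r-k})$, which lies in $\operatorname{lift}_r(P')$.

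None of these steps is a real obstacle; the only point needing attention is the one-variable optimization in (ii).
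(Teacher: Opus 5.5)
Your proposal is correct and follows essentially the same route as the paper. In (i) you observe that the palette polynomial is $m$ times the edge polynomial of $H$. In (ii) you separate the total weight $s$ of the old colors, use homogeneity and AM--GM to reach the optimum $s=k/r$, $z_j=1/r$ (the paper does this in a single AM--GM step), and you extend homomorphisms by sending each new color to the corresponding new color.
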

\begin{proof}
For (i), the polynomial is $m$ times the edge polynomial of $H$. For (ii), the case $r=k$ is immediate. Otherwise, if the old colors have total weight $u$ and the new colors have weights $x_{z_1},\ldots,x_{z_{r-k}}$, the maximum for these total weights is $\lambda(P)u^k\prod_{i=1}^{r-k}x_{z_i}$, subject to $u+\sum_{i=1}^{r-k}x_{z_i}=1$. AM--GM gives the optimum at $u=k/r$ and $x_{z_i}=1/r$. A homomorphism between the original palettes extends by fixing the new colors.
\end{proof}

A palette is \emph{rainbow} if every admissible tuple has pairwise distinct colors. For a rainbow $r$-palette $P$, let $H(P)$ be its underlying simple $r$-graph, and let $m_e\in\{1,\ldots,r!\}$ count the admissible permutations of each edge $e\in H(P)$. Then
\begin{equation}\label{eq:sym-polynomial}
\Lambda_P(\bx)=\sum_{e\in H(P)}m_e \prod_{v\in e}x_v.
\end{equation}

To describe $\Du_r\cap[0,4r^{-r})$, define
\[
\Lu_{r,\le3}\coloneqq
\left\{
\max_{\bx\in\mathbb{S}_n}\sum_{e\in\binom{[n]}r}m_e\prod_{i\in e}x_i\colon
n\ge1\text{ and }m_e\in\{0,1,2,3\}
\right\}.
\]
For any $n\ge1$ and coefficients $m_e\in\{0,1,2,3\}$, construct a palette $P$ on $[n]$ by selecting exactly $m_e$ distinct permutations of each $e\in\binom{[n]}r$ as admissible tuples. Then $\Lambda_P(\bx)=\sum_{e\in\binom{[n]}r}m_e\prod_{i\in e}x_i$, so the maximum in the definition of $\Lu_{r,\le3}$ is $\lambda(P)$. Hence $\Lu_{r,\le3}\subseteq\Du_r$.

\begin{fact}\label{fact:low-density-multiplicity}
If a finite $r$-palette $P$ satisfies $\lambda(P)<4r^{-r}$, then $P$ is rainbow and every underlying edge has at most 3 admissible permutations. Consequently, $\Du_r\cap[0,4r^{-r})=\Lu_{r,\le3}\cap[0,4r^{-r})$.
\end{fact}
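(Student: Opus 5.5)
We argue by contraposition, using only monomial lower bounds as in \eqref{eq:single-tuple}. Let $P$ be a finite $r$-palette with $\lambda(P)<4r^{-r}$.

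First we show that $P$ is rainbow. Suppose some admissible tuple has a repeated color, with distinct-color multiplicities $a_1,\ldots,a_t$ and some $a_i\ge2$. Then $\prod_i a_i^{a_i}\ge 2^2=4$. Since $\Lambda_P$ dominates this single monomial, \eqref{eq:single-tuple} gives $\lambda(P)\ge 4r^{-r}$, a contradiction. Hence every admissible tuple consists of $r$ distinct colors, so $P$ is rainbow and \eqref{eq:sym-polynomial} applies.

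Next we bound the multiplicities. Suppose some edge $e\in H(P)$ has $m_e\ge4$. Put weight $1/r$ on each vertex of $e$ and $0$ elsewhere. Every monomial in \eqref{eq:sym-polynomial} other than the one for $e$ is nonnegative, so
\[
\lambda(P)\ge m_e r^{-r}\ge 4r^{-r},
\]
again a contradiction. Thus $m_e\le3$ for every underlying edge. This proves the first sentence.

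For the equality of sets, the inclusion $\Lu_{r,\le3}\subseteq\Du_r$ was shown just before the statement. Intersecting with $[0,4r^{-r})$ gives one direction. Conversely, let $\lambda(P)\in\Du_r\cap[0,4r^{-r})$. By the above, $P$ is rainbow with all $m_e\in\{1,2,3\}$. Set $m_e=0$ for the $r$-subsets of $C(P)$ not in $H(P)$, and identify $C(P)$ with $[n]$. Then \eqref{eq:sym-polynomial} shows that $\lambda(P)$ is exactly the maximum appearing in the definition of $\Lu_{r,\le3}$. The empty palette gives the value $0$, which arises from all $m_e=0$.

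No step poses a real obstacle. The only point needing care is the multiplicity bound $a_i^{a_i}\ge4$ when some $a_i\ge2$, and this is immediate.
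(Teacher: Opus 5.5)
Your proof is correct and follows essentially the same route as the paper: the single-monomial bound \eqref{eq:single-tuple} excludes repeated colors, the uniform weighting $1/r$ on an underlying edge gives $\lambda(P)\ge m_e r^{-r}$ and hence $m_e\le3$, and \eqref{eq:sym-polynomial} together with $\Lu_{r,\le3}\subseteq\Du_r$ yields the set equality. Your remark that the empty palette's value $0$ is realized with all $m_e=0$ is a small detail the paper leaves implicit.
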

\begin{proof}
If an admissible tuple repeats a color, one of its multiplicities is at least 2, so~\eqref{eq:single-tuple} gives $\lambda(P)\ge4r^{-r}$, a contradiction. Thus $P$ is rainbow. Moreover, assigning weight $1/r$ to each color of an underlying edge $e$ gives $\lambda(P)\ge m_e r^{-r}$, so $m_e<4$ and hence $m_e\le3$. By~\eqref{eq:sym-polynomial}, every finite-palette Lagrangian below $4r^{-r}$ therefore belongs to $\Lu_{r,\le3}$. Together with $\Lu_{r,\le3}\subseteq\Du_r$, this proves the asserted equality.
\end{proof}

A \emph{countable $r$-palette} allows a nonempty countable color set; subpalettes, homomorphisms, and colorability are defined as before. For such a palette $P$, define $\lambda(P)\coloneqq\sup\{\lambda(P[J])\colon\varnothing\ne J\subseteq C(P),\ |J|<\infty\}$. Equivalently, take the supremum of $\Lambda_P(\bx)$ over probability weightings with only finitely many positive coordinates.

For a probability weighting $\bx$ with possibly infinite support, the tuple sum defining $\Lambda_P(\bx)$ converges, since $\sum_{(c_1,\ldots,c_r)\in P}\prod_{i\in[r]}x_{c_i}\le(\sum_{c\in C(P)}x_c)^r=1$. Truncating $\bx$ to increasing finite sets covering $C(P)$, each with positive total weight, and normalizing to total weight 1 gives finitely supported probability weightings whose polynomial values converge to $\Lambda_P(\bx)$. Hence $\Lambda_P(\bx)\le\lambda(P)$, and $\lambda(P)=\sup\{\Lambda_P(\bx)\colon x_c\ge0\text{ for all }c\in C(P),\ \sum_{c\in C(P)}x_c=1\}$, with no restriction on the support of $\bx$.

For countable palettes, this Lagrangian is an analytic extension of the finite-palette notion and need not equal the supremum of the uniform densities attainable by arbitrarily large $P$-colorable hypergraphs. Indeed, \cref{prop:countable-colorability} gives a countable $3$-palette with Lagrangian $2/9$ that colors every finite $3$-graph.

\section{Finite attainment below the threshold}\label{sec:compactness}
We prove \cref{thm:attainment} by excluding ladders of a fixed length from the deficient palettes and then taking a limit of optimal weightings.

\begin{definition}\label{def:ladder}
An \emph{$m$-ladder} in a simple $r$-graph $H$ consists of distinct vertices $b_1,\ldots,b_m$ and $(r-2)$-sets $A_1,\ldots,A_{m-1}$ such that $A_i\cap\{b_i,b_{i+1},\ldots,b_m\}=\varnothing$ for every $i\in[m-1]$ and $A_i\cup\{b_i,b_j\}\in H$ for every $1\le i<j\le m$. The sets $A_i$ may intersect one another and may contain earlier vertices $b_j$ with $j<i$. We call the vertices $b_i$ the \emph{spine vertices} and the sets $A_i$ the \emph{cores}.
A rainbow palette contains an $m$-ladder if its underlying hypergraph does.
\end{definition}

For $r=3$, write $A_i=\{a_i\}$. \Cref{fig:ladder} shows all 10 edges of a $5$-ladder.

\begin{figure}[htbp]
\centering
\begin{tikzpicture}[
  x=1cm,y=1cm,every node/.style={font=\small},
  ladder vertex/.style={circle,fill=black,inner sep=1.8pt},
  ladder core/.style={circle,draw=black,fill=white,inner sep=1.8pt},
  ladder edge/.style={line width=.65pt,line join=round}
]
% Each vertex occurs once. The spine vertices lie on a convex arc so that
% no spine vertex lies on an unrelated triangle boundary.
\foreach \i in {1,2,3,4}{
  \coordinate (a\i) at ({3*(\i-1)},1.6);
}
\foreach \i in {1,2,3,4,5}{
  \coordinate (b\i) at ({3*(\i-1)},{.2125*(\i-1)*(\i-5)});
}
\foreach \i/\j/\edgecolor in {
  1/5/blue!70!black,1/4/blue!70!black,1/3/blue!70!black,1/2/blue!70!black,
  2/5/teal!75!black,2/4/teal!75!black,2/3/teal!75!black,
  3/5/orange!85!black,3/4/orange!85!black,4/5/violet!80!black}{
  \fill[\edgecolor,fill opacity=.09] (a\i)--(b\i)--(b\j)--cycle;
}
\foreach \i/\j/\edgecolor in {
  1/5/blue!70!black,1/4/blue!70!black,1/3/blue!70!black,1/2/blue!70!black,
  2/5/teal!75!black,2/4/teal!75!black,2/3/teal!75!black,
  3/5/orange!85!black,3/4/orange!85!black,4/5/violet!80!black}{
  \draw[ladder edge,\edgecolor] (a\i)--(b\i)--(b\j)--cycle;
}
\foreach \i in {1,2,3,4}{
  \node[ladder core,label=above:{$a_{\i}$}] at (a\i) {};
}
\foreach \i in {1,2,3,4,5}{
  \node[ladder vertex,label=below:{$b_{\i}$}] at (b\i) {};
}
\end{tikzpicture}
\caption{A $5$-ladder for $r=3$. Shaded triangles represent the edges $\{a_i,b_i,b_j\}$ for $i<j$; only marked points are vertices.}
\label{fig:ladder}
\end{figure}

Every pair of spine vertices $b_i,b_j$ is contained in an edge. Consequently, if a rainbow palette containing an $m$-ladder maps to another rainbow palette, its image still contains an $m$-ladder. Indeed, the spine vertices remain distinct, and every required disjointness relation is witnessed inside a mapped edge.

The following compactness theorem is the main step. Its conclusion preserves every deficiency condition shared by the approximating palettes.

\begin{theorem}\label{thm:compactness}
Fix $r\ge3$ and an integer $M\ge2$. Let $P_n$ be finite rainbow $r$-palettes without $M$-ladders. If $\lim_{n\to\infty}\lambda(P_n)=t$, then there are a finite palette $P$ and a subsequence $(P_{n_j})$ such that $\lambda(P)=t$ and $P\to P_{n_j}$ for every sufficiently large $j$.
\end{theorem}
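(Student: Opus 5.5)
We first restrict each $P_n$ to a well-behaved optimal part, then pass to a limit along a subsequence, and finally cut the limit down to a finite palette. For each $n$, choose an optimal weighting $\bx^{(n)}\in\mathbb S_{C(P_n)}$. Pass to the induced subpalette on the support of $\bx^{(n)}$, and take the support minimal among optimal weightings. This does not increase $\lambda$. Since an induced subpalette maps into $P_n$ by inclusion, it suffices to prove the theorem for these restricted palettes. The Lagrangian first-order conditions then give, for every color $c$ in the support, $\partial_c\Lambda_{P_n}(\bx^{(n)})=r\lambda(P_n)$. If $t=0$, take $P$ to be the empty palette on one color, which maps everywhere. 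So we may assume $t>0$, and hence $\lambda(P_n)\ge r^{-r}$ eventually by \cref{lem:finite-values}.

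The key step is a uniform bound on the number of colors carrying weight at least $\delta$, together with a bound showing that small-weight colors contribute little. I would use the absence of $M$-ladders as follows. Suppose many colors $b$ carry weight at least $\delta$. Each such color lies in edges of total weighted degree $r\lambda(P_n)\ge r^{1-r}$, so pairs of heavy colors share many edges. I would then extract a ladder greedily. Pick a heavy $b_1$ and a core $A_1$ such that the link of $A_1\cup\{b_1\}$ contains a large weighted set of heavy colors. Recurse inside that set, picking $b_2,A_2$, and so on. A Ramsey/pigeonhole argument on weights produces $b_1,\ldots,b_M$ with cores satisfying the disjointness conditions, which is a contradiction. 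I expect this to be the main obstacle. The difficulty is that the weight of $\bx^{(n)}$ may be spread over unboundedly many tiny colors, and no single core need see much heavy weight.

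To handle this, I would first try a rainbow symmetrization step. In a rainbow palette without the relevant structure, two colors $u,v$ that are not covered by a common edge can be merged by moving weight from one to the other. The polynomial is linear in each such pair, so one choice does not decrease $\Lambda$, and optimality is preserved. Repeating this, we may assume every pair of support colors lies in a common edge, in the spirit of Frankl--Rödl. With pair-coverage, the $M$-ladder-freeness together with a Ramsey argument bounds the support size by a constant $K=K(r,M)$. The idea is to order the colors and find, for each $i$, a core $A_i$ covering $b_i$ with all later vertices. Making this Ramsey extraction precise is where I expect the real work. If the merging changes the palette, I would instead take a subpalette of $P_n$ (the support restriction), still mapping to $P_n$, whose optimal weighting has pairwise-covered support.

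Once $|C(P_n)|\le K$ for all $n$, the argument is finite. There are only finitely many palettes on at most $K$ colors up to isomorphism. Hence some palette $P$ occurs as $P_n$, up to isomorphism, for infinitely many $n$, and we take the subsequence $P_{n_j}\cong P$. Along this subsequence $\lambda(P_{n_j})=\lambda(P)$ is constant and converges to $t$, so $\lambda(P)=t$. The isomorphisms composed with the inclusions into the original $P_{n_j}$ give the homomorphisms $P\to P_{n_j}$ for every $j$. Because every step passes only to subpalettes, all deficiency conditions are preserved. Thus the whole proof reduces to the bounded-support lemma, which is the main obstacle.
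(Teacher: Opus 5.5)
\textbf{There is a genuine gap: the bounded-support lemma.} Your reduction to minimum-support optimal weightings and your final pigeonhole step are fine. But the whole proof rests on the lemma that the support has at most $K(r,M)$ colors, and the mechanism you propose for it cannot work.

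Pair coverage together with ladder-freeness does not bound the number of colors. Take $r=3$ and a Steiner triple system. Every pair of points lies in a triple, and any two triples share at most one point. A $3$-ladder needs the distinct edges $A_1\cup\{b_1,b_2\}$ and $A_1\cup\{b_1,b_3\}$, which share the $r-1\ge2$ vertices $A_1\cup\{b_1\}$, so no $3$-ladder exists. Steiner triple systems exist on arbitrarily many points, so they are pair-covered, $3$-ladder-free, and unbounded in size.

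The underlying reason is that pair coverage gives a core $A_{ij}$ for each pair $i<j$, but these cores range over an unbounded set. Ramsey's theorem therefore cannot produce a core $A_i$ common to all later $j$. Ramsey only helps once a ladder with common cores is already in hand, as in the lemma producing $S_m^{p,q}$. Your heavy-color heuristic has the same defect: a large weighted degree does not force two heavy colors into a common edge, let alone a common core.

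Any bound would have to use optimality of the weighting in an essential way. Moreover, what you need is much stronger than the theorem. It would force $\lambda(P_{n_j})=t$ exactly along a subsequence, and hence only finitely many Lagrangians among $M$-ladder-free rainbow palettes for each $M$. The theorem allows $\lambda(P_n)$ to decrease strictly to $t$, and the paper explicitly remarks that its argument gives no uniform bound on optimal support sizes.

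\textbf{How the paper avoids this.} It never bounds the support of $P_n$; it passes to a limit object.
\begin{enumerate}
\item Second-order optimality at a minimum-support optimum gives pairwise distinct Gram vectors $\boldsymbol q_v$ on a sphere of radius $\sqrt{r(r-1)\lambda/2}$ with $\mathsf H_{uv}=\|\boldsymbol q_u-\boldsymbol q_v\|^2$. The weighted distributions of these vectors converge weakly, along a subsequence, to a measure $\mu$.
\item Ladder-freeness shows that $\mu$ is purely atomic with all atoms on the sphere. Otherwise the limiting Hessian kernel is bounded below on a set of positive measure, and nested neighborhoods of positive measure build an $M$-ladder inside some $P_n$.
\item Sampling one color of $P_n$ from the preimage of each atom gives random palettes $Q_n$ on the countable atom set, each mapping into $P_n$. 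Their limit $Q$ attains $t$ at the atom weights, because the Hessian mass inside each atom vanishes in the limit.
\item A second ladder argument shows that the Hessian vectors of $Q$ take only finitely many values. Hence some finite subpalette of $Q$ has Lagrangian $t$. It occurs in $Q_n$ with positive probability and therefore maps to $P_n$.
\end{enumerate}
The resulting finite palette need not be isomorphic to a restriction of any $P_n$, and this is exactly the flexibility your approach lacks.
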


We first obtain the required ladder exclusion from finitely many forbidden hypergraphs. In the compactness proof, we use the Hessian to represent the colors by vectors on a sphere. Ladder exclusion forces the limiting distribution to be concentrated on countably many points of the sphere. We use these points to construct a countable palette attaining the limiting value, then find a finite subpalette with the same Lagrangian.

For distinct $p,q\in[r]$ and $m\ge2$, let $S_m^{p,q}$ have colors $b_1,\ldots,b_m$ and $c_{i,\ell}$ for $i\in[m-1]$ and $\ell\in[r]\setminus\{p,q\}$. For each $i<j$, include the tuple with $b_i$ in position $p$, $b_j$ in position $q$, and $c_{i,\ell}$ in position $\ell$.

For $1\le i<j\le m$, the tuples for $p<q$ and $q<p$ are, respectively,
\[
\begin{gathered}
(c_{i,1},\ldots,c_{i,p-1},b_i,c_{i,p+1},\ldots,c_{i,q-1},b_j,c_{i,q+1},\ldots,c_{i,r}),\\
(c_{i,1},\ldots,c_{i,q-1},b_j,c_{i,q+1},\ldots,c_{i,p-1},b_i,c_{i,p+1},\ldots,c_{i,r}).
\end{gathered}
\]
Empty blocks of $c$-colors are omitted.
Note that $S_m^{p,q}$ is an $m$-ladder palette with a fixed ordering of each edge.

The next lemma uses Ramsey's theorem~\cite[Theorem B]{ramsey} to obtain a homomorphism from some $S_m^{p,q}$ to any rainbow $r$-palette whose underlying $r$-graph contains a sufficiently long ladder.

\begin{lemma}\label{lem:ladder-ramsey}
For every $r\ge3$ and $m\ge2$, there is an integer $M\ge m$ such that every rainbow $r$-palette $P$ whose underlying $r$-graph contains an $M$-ladder admits a homomorphism $S_m^{p,q}\to P$ for some distinct $p,q\in[r]$.
\end{lemma}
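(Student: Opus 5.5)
We will find, inside the $M$-ladder, a sub-ladder of length $m$ on which all edges are ordered the same way, and then read off the homomorphism from this uniform pattern.

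\textbf{Setup.} Let $P$ be rainbow and let its underlying $r$-graph $H(P)$ contain an $M$-ladder with spine $b_1,\ldots,b_M$ and cores $A_1,\ldots,A_{M-1}$. For each $i$ fix an enumeration $A_i=\{a_{i,1},\ldots,a_{i,r-2}\}$. Each edge $e_{ij}=A_i\cup\{b_i,b_j\}$ with $i<j$ lies in $H(P)$, so at least one permutation of $e_{ij}$ is admissible in $P$; choose one.

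\textbf{The colouring.} An admissible ordering of $e_{ij}$ places $b_i$ in some position $p$, $b_j$ in some position $q\ne p$, and the core vertices $a_{i,1},\ldots,a_{i,r-2}$ in the remaining positions in some order. Encode this as a bijection $\sigma_{ij}\colon[r]\setminus\{p,q\}\to[r-2]$, where position $\ell$ receives $a_{i,\sigma_{ij}(\ell)}$. Colour the pair $\{i,j\}$ by $(p,q,\sigma_{ij})$. The number of colours is at most $r(r-1)(r-2)!=r!$.

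\textbf{Ramsey step.} Take $M-1\ge R(m+1;r!)$, the Ramsey number for $r!$ colours on $M-1$ points. It is convenient to restrict to indices $1,\ldots,M-1$, so that every spine index used has a core. This gives indices $i_1<\cdots<i_m$, all at most $M-1$, on which every pair has the same colour $(p,q,\sigma)$. Actually $m$ points suffice here, but we must also handle $b_{i_m}$, which plays only the role of a "$j$", so it needs no core. It is therefore enough to choose $m$ indices from $[M]$ whose pairs all receive the same colour; only the first $m-1$ of them need cores.

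\textbf{The homomorphism.} Define $\theta\colon C(S_m^{p,q})\to C(P)$ by
\[
\theta(b_s)=b_{i_s},\qquad \theta(c_{s,\ell})=a_{i_s,\sigma(\ell)}\quad\text{for } s\le m-1,\ \ell\in[r]\setminus\{p,q\}.
\]
The tuple of $S_m^{p,q}$ indexed by $s<t$ has $b_s$ in position $p$, $b_t$ in position $q$, and $c_{s,\ell}$ in position $\ell$. Under $\theta$ it maps exactly to the chosen admissible ordering of $e_{i_si_t}$, so $\theta$ is a homomorphism.

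Two points need care.

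1. \emph{$\theta$ need not be injective, and this is harmless.} Cores may overlap and may contain earlier spine vertices, so $\theta$ can identify colours. A homomorphism is not required to be injective, so the definition stands as long as it is a well-defined map.

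2. \emph{The choice of ordering depends only on $\{i,j\}$.} The colouring must assign exactly one colour to each pair. This holds because we fixed a single admissible permutation for each edge $e_{ij}$ in advance.

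\textbf{Where the difficulty lies.} The real content is designing the colouring so that the core positions are synchronised across all pairs sharing the same $i$. This is why we fix the enumeration of each $A_i$ once, and why the bijection $\sigma$ is recorded in the colour rather than only the spine positions $(p,q)$.

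Since $m\ge2$, the resulting bound $M$ satisfies $M\ge m$.
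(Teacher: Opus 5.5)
Your proof is correct and is essentially the paper's argument: fix an enumeration of each core and one admissible permutation of each edge $A_i\cup\{b_i,b_j\}$, colour $\{i,j\}$ by the positions of $b_i$, $b_j$ and the enumerated core vertices (at most $r!$ colours), and read the homomorphism $S_m^{p,q}\to P$ off a homogeneous set of size $m$. You state your Ramsey step in two inconsistent ways, first with $R(m+1;r!)$ on $[M-1]$ and then with $m$ indices from $[M]$, but both are valid; the paper takes $M=R_2(m;r!)$ on all of $[M]$, as in your closing remark, since only the first $m-1$ selected indices need cores.
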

\begin{proof}
Let $M\coloneqq R_2(m;r!)$, where $R_2(m;r!)$ is the $r!$-color Ramsey number for pairs. Fix an $M$-ladder in $H(P)$ with spine vertices $b_1,\ldots,b_M$ and cores $A_1,\ldots,A_{M-1}$, and order each core arbitrarily. For each $i<j$, choose an admissible permutation of $A_i\cup\{b_i,b_j\}$. Color the pair $\{i,j\}$ by the positions of $b_i$, $b_j$, and the ordered core vertices in this tuple. There are at most $r!$ colors, so Ramsey's theorem gives a homogeneous set of $m$ indices.

On this set, the earlier and later spine vertices occupy fixed positions $p\ne q$, and each ordered core vertex also occupies a fixed position. Map the spine vertices of $S_m^{p,q}$ to the selected spine vertices in increasing order, and map each $c_{i,\ell}$ to the vertex in the corresponding core occupying position $\ell$. Homogeneity makes this choice independent of the later spine vertex, so the map takes every tuple of $S_m^{p,q}$ to an admissible tuple of $P$. This gives the required homomorphism, which may identify auxiliary colors when different cores intersect.
\end{proof}

To obtain the required ladder exclusion, we compare the palettes $S_m^{p,q}$ with a single repeated-color tuple. Fix distinct positions $p,q\in[r]$, and let $R_{p,q}$ consist of a single tuple with one color $a$ in positions $p,q$ and a different color $z_\ell$ in every other position. For $p<q$, set
\[
R_{p,q}\coloneqq\{(z_1,\ldots,z_{p-1},a,z_{p+1},\ldots,z_{q-1},a,z_{q+1},\ldots,z_r)\}.
\]
Empty blocks of $z$-colors are omitted. We have $R_{p,q}=R_{q,p}$ up to the names of its colors.

\begin{lemma}\label{lem:ladder-coloring}
The following statements hold.
\begin{enumerate}[beginpenalty=10000]
\item We have $\lambda(R_{p,q})=4r^{-r}$.
\item For every $m\ge2$, we have $\lambda(S_m^{p,q})=r^{-r}$.
\item We have $\Col(R_{p,q})=\bigcup_{m=2}^{\infty}\Col(S_m^{p,q})$. More precisely, for every $m\ge2$, each $R_{p,q}$-colorable $r$-graph with at most $m$ vertices is $S_m^{p,q}$-colorable.
\end{enumerate}
\end{lemma}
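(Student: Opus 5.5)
For (i), I will compute directly. The palette $R_{p,q}$ has one tuple, whose polynomial is the monomial $x_a^2\prod_{\ell\ne p,q}x_{z_\ell}$. By AM--GM, or by~\eqref{eq:single-tuple} with multiplicities $2,1,\ldots,1$, this monomial is maximized at $x_a=2/r$ and $x_{z_\ell}=1/r$. The maximum is $(2/r)^2r^{-(r-2)}=4r^{-r}$.

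For (ii), the lower bound $r^{-r}$ comes from~\eqref{eq:single-tuple}, since $S_m^{p,q}$ is rainbow and nonempty. For the upper bound I will use the homomorphism $\theta\colon S_m^{p,q}\to R_{p,q}$ that sends every $b_i$ to $a$ and every $c_{i,\ell}$ to $z_\ell$. This only bounds $\lambda(S_m^{p,q})$ by $4r^{-r}$, so a sharper argument is needed.

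The plan is to group the colors by core. Write $y_i\coloneqq x_{b_i}$, and for each $i$ let $w_i\coloneqq\prod_\ell x_{c_{i,\ell}}$. Then
\[
\Lambda=\sum_i w_i\,y_i\sum_{j>i}y_j.
\]
Put $s_i\coloneqq\sum_\ell x_{c_{i,\ell}}$. By AM--GM, $w_i\le (s_i/(r-2))^{r-2}$. The key step is to fix the spine weights and optimize over how to split the remaining mass $1-\sum y$ among the cores. The function $(s/(r-2))^{r-2}$ is convex, so the coefficients $y_iT_i$, where $T_i=\sum_{j>i}y_j$, favour concentrating all core mass on a single index $i$. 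This reduces $\Lambda$ to at most
\[
\max_i\,\bigl(s/(r-2)\bigr)^{r-2}\,y_iT_i, \qquad s=1-\textstyle\sum_j y_j.
\]
Moreover $y_iT_i\le y_i(Y-y_i)$, where $Y=\sum_j y_j$, and this is at most $Y^2/4$. Maximizing $(1-Y)^{r-2}Y^2/4$ over $Y$ gives $Y=2/r$ and the value
\[
\Bigl(\frac{r-2}{r}\cdot\frac1{r-2}\Bigr)^{r-2}\frac{1}{r^2}=r^{-r}.
\]
The main obstacle is making the convexity concentration step rigorous, since the contribution is not linear in $s_i$. I will handle it via the bound $\sum_i \alpha_i s_i^{k}\le (\max_i\alpha_i)(\sum_i s_i)^k$ for $k\ge1$, where $\alpha_i=y_iT_i/(r-2)^{r-2}$.

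For (iii), the inclusion $\Col(S_m^{p,q})\subseteq\Col(R_{p,q})$ follows from the homomorphism $\theta$ above. For the converse, take an $R_{p,q}$-colorable $r$-graph $H$ with a vertex order and a coloring $\chi$, and suppose $H$ has at most $m$ vertices. Every edge is then colored $(z_1,\ldots,a,\ldots,a,\ldots)$: exactly the two faces omitting the $p$-th and $q$-th vertices get color $a$, and each other face gets its own $z_\ell$ according to the position of its omitted vertex. I will recolor as follows. Number the vertices $v_1<\cdots<v_n$, with $n\le m$. Give an $a$-colored face $F$ the color $b_k$, where $k$ is the index of a suitable vertex of $F$. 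Give a $z_\ell$-colored face the color $c_{k,\ell}$, where $k$ is chosen so that the two $a$-faces of each edge receive $b_i$ and $b_j$ with $i<j$ in the correct positions.

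Concretely, in edge $e=\{u_1<\cdots<u_r\}$ the $a$-faces are $e\setminus\{u_p\}$ and $e\setminus\{u_q\}$. The complement vertex of the face is recovered from the face together with the edge, which is ambiguous across edges. So I will instead index by vertices through the ladder structure: set $\chi'(F)=b_{\iota(F)}$, where $\iota(F)$ is the index of the vertex $u_q$ contained in $F=e\setminus\{u_p\}$, and similarly for the other $a$-face. I must check that these choices are consistent when one face lies in several edges. Verifying this consistency, and hence that every edge maps to a tuple of $S_m^{p,q}$ with $b_i$ in position $p$, $b_j$ in position $q$, and $i<j$, is the step I expect to need the most care.
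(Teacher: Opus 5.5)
Parts (i) and (ii) are correct. In (ii) you apply AM--GM inside each core and then use $\sum_i\alpha_i s_i^{r-2}\le(\max_i\alpha_i)(\sum_i s_i)^{r-2}$. This is a harmless variant of the paper's bound $\sum_i\prod_\ell y_{i,\ell}\le\prod_\ell Y_\ell$ followed by a single AM--GM on $B/2,B/2,Y_\ell$.

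The gap is in (iii), at exactly the step you postpone, and that step is the whole content of the reverse inclusion. A face colored $a$ can sit in position $p$ of one edge and in position $q$ of another. Its new color must therefore be computed from the face alone, and one rule must serve both roles. You fix the rule only for $e\setminus\{u_p\}$ (label it by $u_q$) and say ``similarly'' for $e\setminus\{u_q\}$. The natural symmetric reading, labelling $e\setminus\{u_q\}$ by $u_p$, is inconsistent. For example, take $r=3$, $p=1$, $q=3$, and the $R_{1,3}$-colorable $3$-graph with edges $\{1,2,3\}$ and $\{2,3,4\}$. The face $\{2,3\}$ is the position-$1$ face of the first edge and the position-$3$ face of the second, so it would receive both $b_3$ and $b_2$. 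Your rule for the $z_\ell$-colored faces is also left unspecified (``$k$ is chosen so that \dots'').

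The missing idea is to use one positional rule for every $a$-face. Recolor a face $T$ by the $b$ indexed by its vertex in position $h_p$, where $h_p=q-1$ if $p<q$ and $h_p=q$ if $p>q$.
\begin{itemize}
\item In an edge $v_1<\cdots<v_r$, this rule selects $v_q$ from the position-$p$ face. From the position-$q$ face it selects the neighbour $v_{q-1}$ (resp.\ $v_{q+1}$). So the two $a$-faces automatically get distinct indices.
\item A $z_\ell$-face can occur only in position $\ell$. Label it $c_{k,\ell}$, with $k$ given by its vertex in position $h_\ell$ ($q-1$ if $\ell<q$, $q$ if $\ell>q$). This again selects $v_q$, so the core index matches the $b$ in position $p$.
\item For $p<q$ the selected vertices satisfy $v_{q-1}<v_q$, which is the wrong order for $S^{p,q}_m$. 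The paper fixes this by relabelling with $\sigma(v)=N+1-v$.
\end{itemize}
Your proposal has none of these three ingredients. As it stands, the recoloring in (iii) is not well defined, and the inclusion $\Col(R_{p,q})\subseteq\bigcup_m\Col(S_m^{p,q})$ remains unproved.
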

\begin{proof}
For (i), the polynomial of $R_{p,q}$ is the single monomial $x_a^2\prod_{\ell\in[r]\setminus\{p,q\}}x_{z_\ell}$. Its maximum is attained at $x_a=2/r$ and $x_{z_\ell}=1/r$, giving $\lambda(R_{p,q})=4r^{-r}$.

For (ii), write $x_i$ for the weight of $b_i$, $y_{i,\ell}$ for that of $c_{i,\ell}$, and put $B\coloneqq\sum_{i\in[m]}x_i$ and $Y_\ell\coloneqq\sum_{i\in[m-1]}y_{i,\ell}$. It follows from the definition of $S_m^{p,q}$ that
\[
\Lambda_{S_m^{p,q}}
=\sum_{i\in[m-1]}x_i\left(\sum_{j=i+1}^m x_j\right)\prod_{\ell\in[r]\setminus\{p,q\}}y_{i,\ell}
\le\frac{B^2}{4}\prod_{\ell\in[r]\setminus\{p,q\}}Y_\ell
\le r^{-r}.
\]
For the first inequality, use $x_i\sum_{j=i+1}^m x_j\le B^2/4$ and $\sum_{i\in[m-1]}\prod_{\ell\in[r]\setminus\{p,q\}}y_{i,\ell}\le\prod_{\ell\in[r]\setminus\{p,q\}}Y_\ell$. The last inequality follows by applying AM--GM to $B/2,B/2$ and the $r-2$ numbers $Y_\ell$. Assigning weight $1/r$ to each color of one admissible tuple gives equality.

For (iii), the map $b_i\mapsto a$, $c_{i,\ell}\mapsto z_\ell$ gives a homomorphism $S_m^{p,q}\to R_{p,q}$. To prove the reverse inclusion, fix an $R_{p,q}$-coloring of an $r$-graph with ordered vertices $1,\ldots,N$. The conclusion is immediate for an edgeless $r$-graph, so assume $N\ge r$. For an $(r-1)$-face $T$, write $T[h]$ for its $h$th vertex. Define
\[
h_\ell\coloneqq\begin{cases}q-1,&\text{if }\ell<q,\\q,&\text{if }\ell>q,\end{cases}
\quad\text{and}\quad
\sigma(v)\coloneqq\begin{cases}N+1-v,&\text{if }p<q,\\v,&\text{if }p>q.\end{cases}
\]
Here $h_\ell$ is defined for every $\ell\in[r]\setminus\{q\}$ and belongs to $[r-1]$. Recolor each $(r-1)$-face contained in an edge according to $a\mapsto b_{\sigma(T[h_p])}$ and $z_\ell\mapsto c_{\sigma(T[h_\ell]),\ell}$. Consider an edge $v_1<\cdots<v_r$. The faces in position $p$ and in every position $\ell\ne p,q$ all select $v_q$. They therefore receive the same first index $i\coloneqq \sigma(v_q)$. The face in position $q$ selects $v_{q-1}$ if $p<q$ and $v_{q+1}$ if $p>q$. Its index $j$ satisfies $i<j$ in both cases. The resulting tuple belongs to $S_N^{p,q}$. The color assigned to a face depends only on that face, so the rule is consistent on shared faces. Unused faces can be colored arbitrarily. Finally $S_N^{p,q}$ maps to $S_m^{p,q}$ when $N\le m$.
\end{proof}

We now use \cref{lem:ladder-ramsey,lem:ladder-coloring} to exclude long ladders from deficient palettes.

\begin{lemma}\label{lem:finite-certificate}
Suppose $\pi_{r-2}(\Fu)<4r^{-r}$. There are a finite subfamily $\Fu_0\subseteq\Fu$, with $|\Fu_0|\le\binom r2$, and an integer $M\ge2$ such that every $\Fu_0$-deficient palette is rainbow and has no $M$-ladder in its underlying $r$-graph.
\end{lemma}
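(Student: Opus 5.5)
The plan is to extract finitely many forbidden hypergraphs that separate the palettes $R_{p,q}$ and $S_m^{p,q}$ from the deficient palettes, using \cref{thm:palette-tools}. Fix an unordered pair $\{p,q\}\subseteq[r]$; since $R_{p,q}=R_{q,p}$ up to renaming, there are $\binom r2$ such pairs. Because $\pi_{r-2}(\Fu)<4r^{-r}=\lambda(R_{p,q})$ by \cref{lem:ladder-coloring}(i), \cref{thm:palette-tools}(i) shows that $R_{p,q}$ is not $\Fu$-deficient. Hence some $F_{p,q}\in\Fu$ is $R_{p,q}$-colorable. Put $\Fu_0\coloneqq\{F_{p,q}\}$, which has at most $\binom r2$ members. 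Let $m\ge2$ exceed the maximum number of vertices of the members of $\Fu_0$. By \cref{lem:ladder-coloring}(iii), each $F_{p,q}$ is then $S_m^{p,q}$-colorable, and since $S_m^{q,p}$ is the reverse of $S_m^{p,q}$ up to renaming and $\Col$ is reverse-invariant, it is also $S_m^{q,p}$-colorable. Take $M$ from \cref{lem:ladder-ramsey} for this $m$.

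First, rainbowness. Let $Q$ be $\Fu_0$-deficient and suppose some admissible tuple of $Q$ repeats a color, say in positions $p<q$. I would map $R_{p,q}$ into $Q$ by sending $a$ to the repeated color and each $z_\ell$ to the entry of the tuple in position $\ell$. This map is a homomorphism, since $R_{p,q}$ has a single tuple and it is sent to the chosen tuple of $Q$. Hence $\Col(R_{p,q})\subseteq\Col(Q)$, so $F_{p,q}$ is $Q$-colorable, a contradiction. This step is elementary because homomorphisms may identify colors.

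Second, ladders. If $Q$ is $\Fu_0$-deficient, it is rainbow by the first step. If $H(Q)$ contained an $M$-ladder, \cref{lem:ladder-ramsey} would give $S_m^{p',q'}\to Q$ for some distinct $p',q'$. Then $F_{\{p',q'\}}\in\Col(S_m^{p',q'})\subseteq\Col(Q)$, where the order of $p',q'$ is immaterial by the reversal remark above. This contradicts deficiency.

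The main obstacle is keeping the order and orientation conventions straight. Specifically, one must check that $S_m^{q,p}$ and $S_m^{p,q}$ relate through $\rev$ and renaming, and that one hypergraph per unordered pair suffices for both orderings. If that identification fails, I would instead pick a separate $F$ for each ordered pair, which still gives a finite $\Fu_0$; matching the stated bound $\binom r2$ would then require the reversal argument.
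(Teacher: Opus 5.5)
Your strategy is the paper's: one witness $F_{p,q}\in\Fu\cap\Col(R_{p,q})$ for each unordered pair, repeated colors excluded through $R_{p,q}\to Q$, and long ladders excluded through \cref{lem:ladder-ramsey} and \cref{lem:ladder-coloring}(iii). The rainbow step and the ladder step are fine.

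The step that fails is your justification for the opposite orientation. In general, $S_m^{q,p}$ is \emph{not} the reverse of $S_m^{p,q}$ up to renaming. Reversal sends position $\ell$ to $r+1-\ell$, so $\rev(S_m^{p,q})\cong S_m^{r+1-p,\,r+1-q}$. For $m\ge3$ this is isomorphic to $S_m^{q,p}$ only when $p+q=r+1$. The reason is that the core colors of $S_m^{p,q}$ are functions of the color in position $p$ (the earlier spine vertex), not of the color in position $q$. Reversal moves this anchor to position $r+1-p$, whereas $S_m^{q,p}$ has its anchor at $q$. For instance, take $r=3$ and $m\ge3$. In $S_m^{2,1}$ the color in position $3$ is determined by the color in position $2$. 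In $\rev(S_m^{1,2})$, whose tuples are $(c_{i,3},b_j,b_i)$, it is not, since a fixed $b_j$ in position $2$ occurs with several $b_i$ in position $3$.

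The repair is immediate and needs no reversal. \Cref{lem:ladder-coloring}(iii) holds for every \emph{ordered} pair of distinct positions; its proof handles $p<q$ and $p>q$ separately through $\sigma$. Moreover, $R_{q,p}=R_{p,q}$ up to the names of colors, so $F_{p,q}\in\Col(R_{q,p})$. Applying (iii) to the ordered pair $(q,p)$ then gives $F_{p,q}\in\Col(S_m^{q,p})$ directly. This is exactly what the paper does by writing $F_{q,p}\coloneqq F_{p,q}$, and it is why one witness per unordered pair suffices for the bound $\binom r2$.

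Your fallback of choosing a separate witness for each ordered pair is unnecessary, and it would only give $|\Fu_0|\le r(r-1)$. With the direct application of (iii), the rest of your argument goes through unchanged.
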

\begin{proof}
For each unordered pair $\{p,q\}$, \cref{lem:ladder-coloring}(i) gives $\lambda(R_{p,q})=4r^{-r}$, so $R_{p,q}$ is not $\Fu$-deficient by~\cref{thm:palette-tools}(i). Choose $F_{p,q}\in\Fu\cap\Col(R_{p,q})$, and set
\[
\Fu_0\coloneqq\{F_{p,q}\colon 1\le p<q\le r\}.
\]
Write $F_{q,p}\coloneqq F_{p,q}$ when the positions are listed in the opposite order. Put $m\coloneqq\max\{2,|V(F_{p,q})|\colon p<q\}$ and let $M$ be given by \cref{lem:ladder-ramsey} for these $r$ and $m$.

Let $P$ be an $\Fu_0$-deficient palette. If an admissible tuple of $P$ repeats a color in positions $p,q$, there is a homomorphism $R_{p,q}\to P$; the other colors of $R_{p,q}$ may also be identified. Thus $F_{p,q}$ would be $P$-colorable, contradicting the deficiency of $P$. This excludes all repeated-color tuples.

It remains to exclude an $M$-ladder. If $P$ contains an $M$-ladder, then \cref{lem:ladder-ramsey} gives $S_m^{p,q}\to P$ for some distinct $p,q\in[r]$. By \cref{lem:ladder-coloring}(iii), $F_{p,q}$ is $S_m^{p,q}$-colorable and hence $P$-colorable, a contradiction.
\end{proof}

By \cref{lem:finite-certificate}, if a collection of rainbow palettes has underlying ladders of unbounded length, then any forbidden family for which every palette in the collection is deficient has density at least $4r^{-r}$. This conclusion is independent of the orientations of the underlying edges.

For example, fix distinct positions $p,q$. Let $\Fu$ consist of the $r$-graphs that are not $S_m^{p,q}$-colorable for any $m\ge2$. By \cref{lem:ladder-coloring}(iii), these are exactly the $r$-graphs outside $\Col(R_{p,q})$. The proof of \cref{lem:finite-values} gives $\pi_{r-2}(\Fu)=\lambda(R_{p,q})=4r^{-r}$, whereas $\sup_{m\ge2}\lambda(S_m^{p,q})=r^{-r}$.

\begin{proof}[Proof of \cref{thm:compactness}]
If $t=0$, then $\lambda(P_n)<r^{-r}$ for all sufficiently large $n$. Since every nonempty finite $r$-palette has Lagrangian at least $r^{-r}$ by \cref{lem:finite-values}, we have $P_n=\varnothing$ for all such $n$, and a one-color palette with no admissible tuple suffices. Assume $t>0$ and discard finitely many terms so that $\lambda(P_n)>0$ for every $n$.

To justify the derivative calculations for countable palettes, we first establish convergence and continuity of the defining series.
For a countable palette $P$, the tuple sum defining $\Lambda_P$ extends to $\ell^1(C(P))$, the space of real weightings with $\|\bx\|_1\coloneqq\sum_{c\in C(P)}|x_c|<\infty$. The series converges absolutely because $\sum_{(c_1,\ldots,c_r)\in P}\prod_{i\in[r]}|x_{c_i}|\le\|\bx\|_1^r$. Expanding a difference of products gives
\[
|\Lambda_P(\bx)-\Lambda_P(\by)|
\le r\max\{\|\bx\|_1,\|\by\|_1\}^{r-1}\|\bx-\by\|_1.
\]

If $P$ is rainbow, its first and second derivatives are obtained by differentiating the monomials in~\eqref{eq:sym-polynomial} term by term. For distinct colors $u,v$ and every $\bx\in\ell^1(C(P))$, the resulting series satisfy
\[
\begin{aligned}
|\partial_u\Lambda_P(\bx)|
&\le\sum_{\substack{e\in H(P)\\u\in e}}m_e\prod_{w\in e\setminus\{u\}}|x_w|
\le r\|\bx\|_1^{r-1},\\
|\partial_u\partial_v\Lambda_P(\bx)|
&\le\sum_{\substack{e\in H(P)\\u,v\in e}}m_e\prod_{w\in e\setminus\{u,v\}}|x_w|
\le r(r-1)\|\bx\|_1^{r-2},
\end{aligned}
\]
and $\partial_u^2\Lambda_P=0$. These bounds follow from $m_e\le r!$ and $\sum_{A\in\binom{C(P)}j}\prod_{w\in A}|x_w|\le\|\bx\|_1^j/j!$. Applying the same difference-of-products estimate to the derivative series shows that they depend continuously on $\bx$ in the $\ell^1$ norm, uniformly in their coordinate indices. In particular, the sums defining first and second variations along $\ell^1$ directions converge absolutely.

The real separable \emph{Hilbert spaces} used below can be viewed as finite-dimensional Euclidean spaces or as $\ell^2$, the space of real sequences with $\|\boldsymbol{a}\|^2\coloneqq\sum_{i=1}^{\infty} a_i^2<\infty$ and inner product $\langle\boldsymbol{a},\boldsymbol{b}\rangle\coloneqq\sum_{i=1}^{\infty} a_i b_i$. For finite palettes, finitely many dimensions suffice.

We first represent the entries of the Hessian at an optimal weighting as squared distances between vectors on a sphere. The passage from a quadratic-form inequality to a squared-distance representation follows Schoenberg~\cite{schoenberg}; the argument below also identifies the common radius and the weighted center.

\begin{claim}\label{lem:hessian}
Let $P$ be a finite or countable rainbow $r$-palette. Suppose a probability weighting $\bx$ has all coordinates positive and attains $\lambda(P)>0$. Let $\mathsf{H}\coloneqq(\mathsf{H}_{uv})\in\mathbb{R}^{C(P)\times C(P)}$ be the Hessian of $\Lambda_P$ at $\bx$. There are vectors $\boldsymbol{q}_v$ in a Hilbert space, indexed by $v\in C(P)$, with the following properties.
\begin{enumerate}[beginpenalty=10000]
\item $\|\boldsymbol{q}_v\|^2=r(r-1)\lambda(P)/2$ for every $v\in C(P)$.
\item $\mathsf{H}_{uv}=\|\boldsymbol{q}_u-\boldsymbol{q}_v\|^2$ for every $u,v\in C(P)$.
\item $\sum_{v\in C(P)}x_v\boldsymbol{q}_v=0$.
\item If $P$ is finite and $\bx$ has minimum support among its optimal weightings, then the $\boldsymbol{q}_v$ are pairwise distinct.
\end{enumerate}
\end{claim}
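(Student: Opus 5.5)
The plan is to read off the second-order optimality conditions for $\Lambda_P$ on the simplex and apply the Schoenberg construction to the resulting negative-semidefinite form.

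\textbf{Optimality conditions.} Since $P$ is rainbow, $\Lambda_P$ is multilinear by~\eqref{eq:sym-polynomial}, so $\mathsf H_{vv}=0$. Euler's identity for the homogeneous degree-$r$ polynomial gives
\[
\sum_u\mathsf H_{uv}x_u=(r-1)\partial_v\Lambda_P(\bx).
\]
Because $\bx$ is an interior maximizer (all coordinates positive), the first-order condition gives $\partial_v\Lambda_P(\bx)=r\lambda(P)$ for every $v$. For countable palettes this uses variations $\bx+s(\mathbf e_u-\mathbf e_v)$, justified by the $\ell^1$ derivative bounds above. Hence
\[
\sum_u\mathsf H_{uv}x_u=r(r-1)\lambda(P)\eqqcolon 2\rho \quad\text{for every } v.
\]
The second-order condition says $\sum_{u,v}\mathsf H_{uv}w_uw_v\le0$ for every finitely supported $\boldsymbol w$ with $\sum_v w_v=0$, since $\bx+s\boldsymbol w$ stays in the simplex for small $|s|$.

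\textbf{Constructing the vectors.} Define the kernel
\[
K(u,v)\coloneqq \rho-\tfrac12\mathsf H_{uv}.
\]
I claim $K$ is positive semidefinite on finitely supported vectors $\boldsymbol a$. Write $\boldsymbol a=\alpha\bx+\boldsymbol w$ with $\alpha=\sum a_v$ and $\sum w_v=0$. Here $\bx$ is not finitely supported in the countable case, so I truncate or use absolute convergence. Expanding, and using $\sum_u\mathsf H_{uv}x_u=2\rho$ together with $\sum_{u,v}x_ux_v\mathsf H_{uv}=2\rho$,
\[
\boldsymbol a^{\top}K\boldsymbol a=\rho\alpha^2-\tfrac12\bigl(\alpha^2\cdot2\rho+2\alpha\cdot 2\rho\textstyle\sum_v w_v+\boldsymbol w^{\top}\mathsf H\boldsymbol w\bigr)=-\tfrac12\boldsymbol w^{\top}\mathsf H\boldsymbol w\ge0.
\]
The indefinite cross terms cancel exactly because of the constant row sums $\mathsf H\bx$. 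So $K$ is a positive semidefinite kernel on a countable index set, and the standard Gram (Kolmogorov/Moore) construction gives vectors $\boldsymbol q_v$ in a separable Hilbert space with $\langle\boldsymbol q_u,\boldsymbol q_v\rangle=K(u,v)$. This can be done concretely by completing the space of finitely supported functions modulo null vectors.

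\textbf{Reading off (i)–(iii).} Since $\mathsf H_{vv}=0$, we get $\|\boldsymbol q_v\|^2=K(v,v)=\rho$, which is (i). Then
\[
\|\boldsymbol q_u-\boldsymbol q_v\|^2=2\rho-2K(u,v)=\mathsf H_{uv},
\]
which is (ii). For (iii), the sum $\sum_v x_v\boldsymbol q_v$ converges in norm because $\|\boldsymbol q_v\|$ is bounded and $\bx\in\ell^1$. Its squared norm is
\[
\sum_{u,v}x_ux_vK(u,v)=\rho-\tfrac12\cdot2\rho=0.
\]

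\textbf{Distinctness (iv).} Suppose $\boldsymbol q_u=\boldsymbol q_v$ with $u\ne v$. Then $\mathsf H_{uv}=0$ and $\mathsf H_{uw}=\mathsf H_{vw}$ for all $w$, since these are distances to a common point. Multilinearity gives $\partial_u\partial_v\Lambda_P\equiv0$, so no edge contains both $u$ and $v$, and $\Lambda_P$ is affine along $\mathbf e_u-\mathbf e_v$ near $\bx$ and globally. The first derivative along this direction is $\partial_u\Lambda_P-\partial_v\Lambda_P=0$ at $\bx$, so $\Lambda_P$ is constant on the segment $\bx+s(\mathbf e_u-\mathbf e_v)$, $s\in[-x_u,x_v]$. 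At the endpoint $s=x_v$ we get an optimal weighting whose support omits $v$. This contradicts minimum support, since every optimal weighting in the claim has full support.

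\textbf{Main obstacle.} The delicate points are the cancellation showing that $K$ is positive semidefinite and, in the countable case, justifying the first- and second-order conditions and the convergence of $\sum x_v\boldsymbol q_v$. Both are handled by the $\ell^1$ estimates stated before the claim.
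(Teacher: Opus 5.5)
Your proposal is correct and follows the paper's proof essentially step for step. It uses the same first- and second-order conditions with $\mathsf{H}\bx=r(r-1)\lambda(P)\one$, the same kernel $K=(r(r-1)\lambda(P)\one\one^{\mathsf T}-\mathsf{H})/2$ shown positive semidefinite by splitting off the multiple of $\bx$, the same Gram construction for (i)--(iii), and the same Motzkin--Straus weight-transfer argument for (iv). One small correction: the step from $\mathsf{H}_{uv}=0$ to ``no edge contains both $u$ and $v$'' follows from the positivity of all weights and the nonnegativity of the coefficients $m_e$, not from multilinearity.
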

\begin{proof}[Proof of the claim]
Put $d\coloneqq r(r-1)\lambda(P)$. Applying the method of Lagrange multipliers to any two coordinates $u,v$, with all other weights fixed, gives $\partial_u\Lambda_P(\bx)=\partial_v\Lambda_P(\bx)$. Since $\Lambda_P$ is homogeneous of degree $r$, Euler's identity gives $\sum_{v\in C(P)} x_v\partial_v\Lambda_P(\bx)=r\Lambda_P(\bx)=r\lambda(P)$. Since the partial derivatives are equal and $\sum_{v\in C(P)}x_v=1$, the gradient vector satisfies $\nabla\Lambda_P(\bx)=(\partial_v\Lambda_P(\bx))_{v\in C(P)}=r\lambda(P)\one$.

For each color $u$, the first derivative $\partial_u\Lambda_P$ is homogeneous of degree $r-1$. Applying Euler's identity to this function gives $\sum_{v\in C(P)} \mathsf{H}_{uv}x_v=(r-1)\partial_u\Lambda_P(\bx)=r(r-1)\lambda(P)=d$. Hence $\mathsf{H}\bx=d\one$. For every finitely supported $\bz$ with $\sum_{v\in C(P)}z_v=0$, the weighting $\bx+\varepsilon\bz$ is feasible for all sufficiently small positive and negative $\varepsilon$, since $\bx$ is positive on the support of $\bz$. Optimality therefore gives $\left.\frac{d^2}{d\varepsilon^2}\Lambda_P(\bx+\varepsilon\bz)\right|_{\varepsilon=0}=\bz^{\mathsf T}\mathsf{H}\bz\le0$.

All Hessian entries are bounded by $r!/(r-2)!=r(r-1)$. Finitely supported zero-sum vectors are dense in the zero-sum subspace of $\ell^1$, so the second-order inequality extends to every zero-sum $\ell^1$ vector. The derivative identities above are absolutely convergent.

Since the polynomial is squarefree, we have $\mathsf{H}_{vv}=0$. Set $\mathsf{G}\coloneqq(d\one\one^{\mathsf T}-\mathsf{H})/2$. For a finitely supported vector $\bz$, write $s\coloneqq\sum_{v\in C(P)}z_v$ and $\by\coloneqq\bz-s\bx$. The identity $\mathsf{H}\bx=d\one$ gives
\[
\bz^{\mathsf T}\mathsf{G}\bz
=\tfrac12\bigl(ds^2-\bz^{\mathsf T}\mathsf{H}\bz\bigr)
=-\tfrac12\by^{\mathsf T}\mathsf{H}\by\ge0.
\]
Thus $\mathsf{G}$ is a positive semidefinite matrix, also when indexed by a countable set. Its Gram representation gives vectors $\boldsymbol{q}_v$ with $\langle \boldsymbol{q}_u,\boldsymbol{q}_v\rangle=\mathsf{G}_{uv}$. Since $\mathsf{G}_{vv}=d/2$, we have $\|\boldsymbol{q}_v\|^2=d/2$ and $\|\boldsymbol{q}_u-\boldsymbol{q}_v\|^2=\mathsf{G}_{uu}+\mathsf{G}_{vv}-2\mathsf{G}_{uv}=\mathsf{H}_{uv}$, proving (i) and (ii). Moreover, the vector sum $\sum_{v\in C(P)}x_v\boldsymbol{q}_v$ converges absolutely and has squared norm $\bx^{\mathsf T}\mathsf{G}\bx=0$, so it is zero, proving (iii).

For (iv), the weight-transfer argument of Motzkin and Straus~\cite{motzkin-straus} and Frankl and R\"odl~\cite{frankl-rodl} shows that, at an optimal weighting of minimum support on a finite palette, any two colors of positive weight lie in a common underlying edge. Otherwise the polynomial is affine when weight is transferred between those colors, and optimality permits deleting one of them. A common edge, positive weights, and nonnegative coefficients give $\mathsf{H}_{uv}>0$. The distance formula then implies $\boldsymbol{q}_u\ne \boldsymbol{q}_v$.
\end{proof}

For a Borel probability measure $\nu$ on a separable Hilbert space, a point $\boldsymbol{q}$ is an \emph{atom} if $\nu(\{\boldsymbol{q}\})>0$. The set of atoms is at most countable. We call $\nu$ \emph{atomic}, or \emph{purely atomic}, if all its probability lies on these atoms, that is, $\sum_{\boldsymbol{q}\colon\nu(\{\boldsymbol{q}\})>0}\nu(\{\boldsymbol{q}\})=1$. The restriction of $\nu$ to the complement of its atoms is its \emph{nonatomic part}.

To apply \cref{lem:hessian} to each $P_n$, choose an optimal weighting of minimum support and restrict $P_n$ to this support. Denote its positive weights by $x_v^{(n)}$. Put $c_n\coloneqq r(r-1)\lambda(P_n)$ and $c\coloneqq r(r-1)t$, so $\lim_{n\to\infty}c_n=c$. By \cref{lem:hessian}, there are pairwise distinct vectors $\boldsymbol{p}_v^{(n)}$ with $\|\boldsymbol{p}_v^{(n)}\|^2=c_n/2$ and $\mathsf{H}_{uv}^{(n)}=c_n-2\langle \boldsymbol{p}_u^{(n)},\boldsymbol{p}_v^{(n)}\rangle$. Let $\mu_n\coloneqq\sum_{v\in C(P_n)}x_v^{(n)}\delta_{\boldsymbol{p}_v^{(n)}}$ be the probability distribution of these vectors. Here $\delta_{\boldsymbol{q}}$ denotes the probability measure concentrated at $\boldsymbol{q}$.

To control uniformly in $n$ the error from truncating inner products to finitely many coordinates, rotate each configuration so that its weighted second-moment matrix is diagonal. Its $(i,j)$ entry is $\sum_{v\in C(P_n)} x_v^{(n)}p_{v,i}^{(n)}p_{v,j}^{(n)}$. The matrix is real symmetric and positive semidefinite, so the spectral theorem provides an orthogonal change of coordinates that diagonalizes it and preserves all norms and inner products. Order the diagonal entries so that $\rho_{n,1}\ge\rho_{n,2}\ge\cdots\ge0$; their sum is $c_n/2$. Embed all these configurations into $\ell^2$. They lie in a common closed ball centered at the origin. In the coordinatewise topology, this ball is a closed subset of a countable product of compact intervals, since its radius bound can be checked on finite sums of squared coordinates. The ball is therefore compact and metrizable. Passing to a subsequence, $\mu_n$ converges weakly on that ball to a probability measure $\mu$, meaning that $\lim_{n\to\infty}\int\varphi\dd\mu_n=\int\varphi\dd\mu$ for every bounded continuous real-valued function $\varphi$ on the ball with its coordinatewise topology. The Skorokhod representation theorem~\cite{billingsley} gives random vectors $\boldsymbol{p}_n$ and $\boldsymbol{p}$, with distributions $\mu_n$ and $\mu$, on $([0,1],\mathrm{Leb})$ such that, for almost every $s$, we have $\lim_{n\to\infty}p_{n,j}(s)=p_j(s)$ for every $j\ge1$. Here $\mathrm{Leb}$ denotes Lebesgue measure, and for measurable subsets of $[0,1]^k$ we write $|\cdot|$ for their Lebesgue measure. For almost every $s$ and every positive integer $L$, coordinatewise convergence gives
\[
\sum_{j=1}^L p_j(s)^2
=\lim_{n\to\infty}\sum_{j=1}^L p_{n,j}(s)^2
\le\lim_{n\to\infty}\frac{c_n}{2}
=\frac c2.
\]
Letting $L$ tend to infinity shows that $\|\boldsymbol{p}(s)\|^2\le c/2$ almost everywhere. Since the finite vectors are pairwise distinct, $\boldsymbol{p}_n(s)$ determines a unique color $\phi_n(s)$ of $P_n$, with distribution $(x_v^{(n)})_v$.

We next show that if $\mu$ has a nonzero nonatomic part or an atom of squared norm less than $c/2$, then one of the finite palettes $P_n$ contains an $M$-ladder. For this purpose, we express the Hessian entries as symmetric kernels and prove their $L^2$ convergence. Define
\[
K_n(s,t)\coloneqq c_n-2\langle \boldsymbol{p}_n(s),\boldsymbol{p}_n(t)\rangle,
\quad\text{and}\quad
K(s,t)\coloneqq c-2\langle \boldsymbol{p}(s),\boldsymbol{p}(t)\rangle.
\]
\begin{claim}\label{clm:kernel-convergence}
The kernels satisfy $\lim_{n\to\infty}\|K_n-K\|_{L^2([0,1]^2)}=0$. That is, $\lim_{n\to\infty}\int_{[0,1]^2}|K_n(s,t)-K(s,t)|^2\dd s\dd t=0$.
\end{claim}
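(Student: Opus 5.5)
Since $|c_n-c|\to0$, it suffices to show that $\langle\boldsymbol{p}_n(s),\boldsymbol{p}_n(t)\rangle\to\langle\boldsymbol{p}(s),\boldsymbol{p}(t)\rangle$ in $L^2([0,1]^2)$. All these inner products are bounded in absolute value by $\sup_n c_n/2$. Pointwise coordinatewise convergence alone does not give convergence of the inner products in infinite dimensions, so the plan is to truncate to the first $L$ coordinates. Write $\Pi_L$ for the projection onto the first $L$ coordinates and split
\[
\langle\boldsymbol{p}_n(s),\boldsymbol{p}_n(t)\rangle-\langle\boldsymbol{p}(s),\boldsymbol{p}(t)\rangle
=\bigl(\langle\Pi_L\boldsymbol{p}_n(s),\Pi_L\boldsymbol{p}_n(t)\rangle-\langle\Pi_L\boldsymbol{p}(s),\Pi_L\boldsymbol{p}(t)\rangle\bigr)+E_{n,L}(s,t)-E_L(s,t),
\]
where $E_{n,L}$ and $E_L$ are the tail inner products over the coordinates $j>L$.

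\textbf{Truncated part.} For fixed $L$, coordinatewise convergence almost everywhere gives pointwise convergence of the truncated part almost everywhere on $[0,1]^2$. The terms are uniformly bounded, so dominated convergence shows that this part tends to $0$ in $L^2$ as $n\to\infty$.

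\textbf{Tails.} This is the main step, and it is where the diagonalization of the second-moment matrices is used. Since $\boldsymbol{p}_n(s)$ and $\boldsymbol{p}_n(t)$ are independent copies with law $\mu_n$, and the coordinates are diagonal,
\[
\|E_{n,L}\|_{L^2}^2=\sum_{i,j>L}\Bigl(\int p_{n,i}p_{n,j}\dd s\Bigr)^2=\sum_{j>L}\rho_{n,j}^2\le\rho_{n,L+1}\sum_{j}\rho_{n,j}\le\frac{c_n}{2}\cdot\frac{c_n}{2(L+1)}.
\]
The last inequality uses that the $\rho_{n,j}$ are nonincreasing with sum $c_n/2$, so that $\rho_{n,L+1}\le c_n/(2(L+1))$. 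This bound is uniform in $n$.

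For the limit, the second-moment matrix of $\boldsymbol{p}$ has entries $\int p_ip_j\dd s=\lim_n\int p_{n,i}p_{n,j}\dd s$. To justify this limit, dominate $p_{n,i}p_{n,j}$ by its bound and use the almost everywhere convergence, which holds coordinatewise. Hence the matrix is diagonal with entries $\rho_j=\lim_n\rho_{n,j}$, passing to a further subsequence if needed by a diagonal argument. These entries are nonincreasing and satisfy $\sum_j\rho_j=\int\|\boldsymbol{p}\|^2\le c/2$, by Fatou's lemma or by the bound $\|\boldsymbol{p}(s)\|^2\le c/2$ already proved. The same computation therefore gives
\[
\|E_L\|_{L^2}^2=\sum_{j>L}\rho_j^2\le\frac{c^2}{4(L+1)}.
\]

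\textbf{Conclusion.} Given $\eps>0$, first choose $L$ so that both tail norms are at most $\eps$ for all $n$. Then let $n\to\infty$ in the truncated part. This proves $\limsup_n\|K_n-K\|_{L^2}\le4\eps$, and since $\eps$ was arbitrary, the claim follows. The only delicate point is making sure the entries $\int p_ip_j$ converge. This holds because each integrand is bounded by $c_n/2$ in absolute value, so dominated convergence applies.
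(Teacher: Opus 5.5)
Your proposal is correct and follows essentially the same route as the paper. You truncate to the first $L$ diagonalized coordinates and bound the tails by $\sum_{j>L}\rho_j^2\le C_0^2/(L+1)$ uniformly in $n$, and likewise for the limit, whose second-moment matrix stays diagonal with $\rho_j=\lim_n\rho_{n,j}$. You then handle the truncated part by bounded convergence. The further subsequence you mention is unnecessary, since bounded convergence already shows that each limit $\lim_n\int p_{n,i}p_{n,j}$ exists.
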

\begin{proof}[Proof of the claim]
Put $C_0\coloneqq\sup_{n\ge1}c_n/2<\infty$. For every $n\ge1$ and positive integer $L$, the diagonal entries $\rho_{n,j}$ of the second-moment matrix are nonnegative and nonincreasing, with sum $c_n/2\le C_0$, so $\rho_{n,L+1}\le C_0/(L+1)$. It follows that $\sum_{j=L+1}^{\infty}\rho_{n,j}^2\le\rho_{n,L+1}\sum_{j=L+1}^{\infty}\rho_{n,j}\le C_0^2/(L+1)$.

We now apply this estimate to the error in truncating the inner product. By the diagonalization, $\int_0^1 p_{n,i}(s)p_{n,j}(s)\dd s=0$ for $i\ne j$, and $\int_0^1 p_{n,j}(s)^2\dd s=\rho_{n,j}$. Expanding the square and integrating therefore gives
\[
\iint_{[0,1]^2}\left|\langle \boldsymbol{p}_n(s),\boldsymbol{p}_n(t)\rangle-
\sum_{j=1}^L p_{n,j}(s)p_{n,j}(t)\right|^2\dd s\dd t
=\sum_{j=L+1}^{\infty}\rho_{n,j}^2
\le\frac{C_0^2}{L+1}.
\]
For each fixed $L$, the truncated sums converge in $L^2$ by the almost-everywhere coordinatewise convergence of $\boldsymbol{p}_n$ to $\boldsymbol{p}$ and bounded convergence. To obtain the corresponding tail bound for $\boldsymbol{p}$, note that, for each $j$, bounded convergence gives $\rho_j\coloneqq \int_0^1 p_j(s)^2\dd s=\lim_{n\to\infty}\rho_{n,j}$, and, for $i\ne j$, $\int_0^1 p_i(s)p_j(s)\dd s=\lim_{n\to\infty}\int_0^1 p_{n,i}(s)p_{n,j}(s)\dd s=0$. The sequence $(\rho_j)$ is nonincreasing. Moreover, for every positive integer $L$, $\sum_{j=1}^L\rho_j=\lim_{n\to\infty}\sum_{j=1}^L\rho_{n,j}\le C_0$, so $\sum_{j=1}^{\infty}\rho_j\le C_0$. Thus $\sum_{j=L+1}^{\infty}\rho_j^2\le C_0^2/(L+1)$, so the same tail estimate holds for $\boldsymbol{p}$. First choose $L$ large, then let $n$ tend to infinity, which proves the claim.
\end{proof}

In particular, convergence holds in $L^1$. The kernels $K_n$ are nonnegative, being Hessian entries, and $K$ is nonnegative because $\|\boldsymbol{p}(s)\|^2\le c/2$.

Let $m_e^{(n)}$ count the admissible permutations of $e\in H(P_n)$. Define a symmetric function $\omega_n\colon[0,1]^r\to[0,r!]$ by
\[
\omega_n(s_1,\ldots,s_r)\coloneqq
\begin{cases}
m_e^{(n)},&\text{if }e=\{\phi_n(s_1),\ldots,\phi_n(s_r)\}\in H(P_n),\\
0,&\text{otherwise}.
\end{cases}
\]
Then $0\le \omega_n\le r!$ and
\begin{equation}\label{eq:f-hessian}
\int_{[0,1]^{r-2}}\omega_n(s,t,u_3,\ldots,u_r)\dd u_3\cdots\dd u_r
=(r-2)!K_n(s,t).
\end{equation}
The factor $(r-2)!$ counts the orders of the remaining colors of an edge after the first two colors have been fixed; each Hessian monomial occurs once for every such order.

\begin{claim}\label{clm:atomic-limit}
The measure $\mu$ is purely atomic, and every atom has squared norm $c/2$.
\end{claim}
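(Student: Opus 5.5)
We argue by contradiction: we show that a nonzero nonatomic part, or an atom of squared norm less than $c/2$, forces an $M$-ladder in $H(P_n)$ for some large $n$. The key observation concerns pairs $s,t$ with $\boldsymbol p(s)=\boldsymbol p(t)$ and $\|\boldsymbol p(s)\|^2<c/2$: for them, $K(s,t)=c-2\|\boldsymbol p(s)\|^2>0$. More generally, $K(s,t)$ is bounded below by a positive constant whenever $\boldsymbol p(s)$ and $\boldsymbol p(t)$ are close together and have squared norm bounded away from $c/2$.

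Next we find a set $S\subseteq[0,1]$ of positive measure with two properties. First, $K\ge\kappa>0$ on $S\times S$. Second, $S$ is \emph{diffuse in the limit}, meaning that $\mu_n$-atoms carry only a vanishing part of it. In the atom case, take $S=\boldsymbol p^{-1}(\boldsymbol q)$ for the short atom $\boldsymbol q$. In the nonatomic case, take a small ball around a point of the support of the nonatomic part. Every point $\boldsymbol q$ in such a ball satisfies $\|\boldsymbol q\|^2\le c/2$, and two points at distance $\delta$ both of norm near $\sqrt{c/2}$ are $\delta$-apart. So $c-2\langle\boldsymbol q,\boldsymbol q'\rangle=\|\boldsymbol q-\boldsymbol q'\|^2+(c-\|\boldsymbol q\|^2-\|\boldsymbol q'\|^2)$, and positivity needs more care there. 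I would first try to use $K\ge\|\boldsymbol p(s)-\boldsymbol p(t)\|^2$ and pick $S$ as a union of $M$ well-separated small balls meeting the nonatomic support. That gives $K\ge\kappa$ between different balls, which is all the ladder construction below needs.

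By \cref{clm:kernel-convergence} and \eqref{eq:f-hessian}, for large $n$ we have $\int\omega_n(s,t,\bullet)\ge (r-2)!\,\kappa/2$ on most of $S\times S$ in measure. Here the colors $\phi_n(s)$ for $s\in S$ are spread over many distinct colors of small individual weight. In the atom case this holds because the $\boldsymbol p_v^{(n)}$ are distinct and their weights concentrate near $\boldsymbol q$ only in the limit; I must check that no single color of $P_n$ keeps a positive share. If one did, the pointwise limit would give an atom whose $K_n$-diagonal is zero, since $\mathsf H_{vv}=0$. That contradicts $K\ge\kappa$ on the diagonal of $S$ in $L^1$, because the diagonal of a single color's cell has positive $[0,1]^2$-measure. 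Iteratively we then choose spine colors $b_1,b_2,\ldots,b_M$. Choose $b_1$ typical, so that for most later $t$ the pair $\{b_1,\phi_n(t)\}$ lies in edges of total weight at least $\kappa'$. Then fix, by pigeonhole over the $(r-2)$-tuples, a single core $A_1$ that works for a positive-measure set $S_2\subseteq S$ of later points. This pigeonhole step is the main obstacle, since the cores range over infinitely many possible sets. I would handle it by a density-increment or averaging argument. For a fixed $b_1$, the function $t\mapsto\int\omega_n(s_1,t,\bullet)$ is a sum over cores $A$ of weight$(A)\cdot\ind{A\cup\{b_1,\phi_n(t)\}\in H}$. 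Choosing $A$ at random with probability proportional to its weight, the expected measure of good $t$ is at least $\kappa'/r!$, so some $A_1$ gives $|S_2|\ge\kappa'|S|/r!$. We then remove the colors of $A_1\cup\{b_1\}$, which costs negligible measure because colors have small weight, and repeat inside $S_2$. After $M$ steps the measures shrink only by constant factors depending on $M,r,\kappa$, so a large $n$ keeps everything positive. The resulting $b_i,A_i$ form an $M$-ladder in $H(P_n)$, contradicting the hypothesis.

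Hence $\mu$ has no nonatomic part, and every atom has squared norm exactly $c/2$, since all atoms have norm at most $\sqrt{c/2}$ and short atoms were excluded.
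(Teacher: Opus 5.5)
Your argument for a short atom $\boldsymbol q$ is sound. On $S=\boldsymbol p^{-1}(\boldsymbol q)$ the limit kernel is the constant $\kappa=c-2\|\boldsymbol q\|^2>0$, so $\int_{S'\times S'}K_n\ge\kappa|S'|^2-\|K_n-K\|_{L^1}$ holds simultaneously for \emph{every} measurable $S'\subseteq S$. Your averaging iteration, with the target measures fixed before $n$, then yields an $M$-ladder. (Checking that no color of $P_n$ keeps a positive share is unnecessary, since $\omega_n>0$ already forces the $r$ colors to be distinct.)

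The nonatomic case, however, has a genuine gap. Each step of your iteration needs a lower bound on $\int_{S_i\times S_i}K_n$ for the current set $S_i$. That set is arbitrary and depends on $n$, because it comes from the previous choice of spine color and core. With $S=D_1\cup\cdots\cup D_M$ the preimage of $M$ separated balls, you only control $K$ on the off-diagonal blocks $D_i\times D_j$. One core must serve all later spine vertices, and averaging over cores bounds only the total measure of the good set. So nothing prevents $S_2$ from lying inside a single $D_j$. There you have no lower bound on $K$, and the second step has nothing to work with. Re-splitting into smaller separated balls after $S_2$ is known does not help: the separation and the piece measures would then depend on $n$, and $\|K_n-K\|_{L^1}\to0$ gives no control over such $n$-dependent quantities.

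The missing observation is that separation is not needed. Since $\|\boldsymbol p(s)\|^2\le c/2$, you have $K(s,t)=\|\boldsymbol p(s)-\boldsymbol p(t)\|^2+(c-\|\boldsymbol p(s)\|^2-\|\boldsymbol p(t)\|^2)$. Hence $K(s,t)=0$ forces $\boldsymbol p(s)=\boldsymbol p(t)$ with squared norm $c/2$. Let $U$ be the set of $s$ for which $\boldsymbol p(s)$ is not an atom of squared norm $c/2$. By Fubini, $K>0$ almost everywhere on $U\times U$: a positive-measure set of such coincidences would make $\boldsymbol p(s)$ an atom of squared norm $c/2$ for some $s\in U$. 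Consequently, for each $a\in(0,|U|]$ there is $\gamma>0$ with $|\{(s,t)\in U^2\colon K(s,t)<\gamma\}|<a^2/2$. Then every measurable $S\subseteq U$ with $|S|\ge a$ satisfies $\int_{S\times S}K\ge\gamma a^2/2$, and this bound is uniform over all such $S$, including sets chosen after $n$. Fix the scales $a_0>a_1>\cdots>a_{M-1}$ and the corresponding $\gamma_i$ first, and only then choose $n$. Your averaging iteration then runs inside $U$ and treats the nonatomic part and short atoms at once; this is how the paper argues.
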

\begin{proof}[Proof of the claim]
Let $U\subseteq[0,1]$ be the set of points for which $\boldsymbol{p}(s)$ is not an atom of $\mu$ having squared norm $c/2$. We have $K(s,t)>0$ almost everywhere on $U\times U$. Indeed, $K(s,t)=0$ requires $\boldsymbol{p}(s)=\boldsymbol{p}(t)$ and both vectors to have squared norm $c/2$. The event $K(s,t)=0$ has positive product measure only at an atom with that squared norm, and all such atoms were removed in defining $U$.

Suppose $|U|>0$, and fix $a\in(0,|U|]$. Choose $\gamma>0$ such that $|\{(s,t)\in U^2\colon K(s,t)<\gamma\}|<a^2/2$. For every measurable $S\subseteq U$ with $|S|\ge a$, at least half of $S^2$ lies where $K\ge\gamma$. By \cref{clm:kernel-convergence}, for all sufficiently large $n$,
\[
\int_{S\times S}K_n
\ge\frac{\gamma a^2}{2}-\|K_n-K\|_{L^1}
\ge\frac{\gamma a^2}{4}.
\]
This bound holds uniformly for all such $S$, including sets chosen after $n$.

Since $\omega_n\le r!$, equation~\eqref{eq:f-hessian} gives
\[
\int_{S\times S\times[0,1]^{r-2}}\ind{\omega_n>0}
\ge\frac1{r(r-1)}\int_{S\times S}K_n.
\]
If $|S|\ge a$, averaging over $(t,u_3,\ldots,u_r)\in S\times[0,1]^{r-2}$ gives a choice with $\int_S\ind{\omega_n(s,t,u_3,\ldots,u_r)>0}\dd s\ge\gamma a^2/(4r(r-1))$, since the averaging domain has measure at most 1. Put $b\coloneqq\phi_n(t)$ and $A\coloneqq\{\phi_n(u_3),\ldots,\phi_n(u_r)\}$. Then $S'\coloneqq\{s\in S\colon A\cup\{b,\phi_n(s)\}\in H(P_n)\}$ has measure at least $\gamma a^2/(4r(r-1))$. Positivity ensures that $A$ has $r-2$ distinct colors and is disjoint from $b$.

Starting with $a_0\coloneqq|U|$, make the following choices successively for $0\le i<M-1$. Choose $\gamma_i>0$ as above with $a=a_i$, and then choose $0<a_{i+1}\le\min\{a_i/2,\gamma_i a_i^2/(8r(r-1))\}$. After these finitely many choices, choose a single $n$ for which $\int_{S\times S}K_n\ge\gamma_i a_i^2/4$ holds for every $0\le i<M-1$ and every measurable $S\subseteq U$ with $|S|\ge a_i$. Starting with $S_0\coloneqq U$, repeat the preceding neighborhood selection $M-1$ times to obtain nested sets $S_i$ of measure at least $a_i$, spine vertices $b_i$, and cores $A_i$. Choose $s\in S_{M-1}$ and put $b_M\coloneqq\phi_n(s)$. Every later spine vertex is the color of a point in each earlier selected neighborhood. By the definitions of these neighborhoods, it is distinct from the corresponding earlier spine vertex and from every color in its core. Thus, for every $1\le i<j\le M$, we have $A_i\cap\{b_i,\ldots,b_M\}=\varnothing$ and $A_i\cup\{b_i,b_j\}\in H(P_n)$. This is an $M$-ladder, a contradiction.

Thus $|U|=0$, which proves the claim.
\end{proof}

By \cref{clm:atomic-limit}, write the limit distribution as $\mu=\sum_{i\in I}w_i\delta_{\boldsymbol{p}_i}$, where $I$ is finite or countably infinite, the $\boldsymbol{p}_i$ are distinct with $\|\boldsymbol{p}_i\|^2=c/2$, and the weights satisfy $w_i>0$ and $\sum_{i\in I}w_i=1$.

We now use the atoms as colors while retaining the limiting Lagrangian. Sampling a point from the preimage of each atom under $\boldsymbol{p}$ and taking its color in $P_n$ gives palettes with homomorphisms into $P_n$, and passing to a limit will recover the value $t$.

Let $B_i\coloneqq\{s\colon \boldsymbol{p}(s)=\boldsymbol{p}_i\}$, so $|B_i|=w_i$. Independently for $i\in I$, choose $\xi_i$ according to normalized Lebesgue measure on $B_i$. Define a random palette $Q_n$ on $I$ by declaring $(i_1,\ldots,i_r)$ admissible exactly when $(\phi_n(\xi_{i_1}),\ldots,\phi_n(\xi_{i_r}))\in P_n$. It is therefore rainbow. For every finite $J\subseteq I$, the map $i\mapsto\phi_n(\xi_i)$ is a homomorphism $Q_n[J]\to P_n$. By the observation after \cref{def:ladder}, homomorphisms between rainbow palettes preserve ladders. Thus an $M$-ladder in $Q_n$ would give one in $P_n$, so every $Q_n$ has no $M$-ladder.

Identify a palette on $I$ with the $0/1$ indicators of its admissible tuples, indexed by $I^r$. The resulting space $\{0,1\}^{I^r}$ is compact and metrizable in the product topology. A sequence converges in this topology precisely when the indicator of each fixed tuple eventually agrees with its indicator in the limit. Passing to a subsequence, the laws of $Q_n$ converge weakly to the law of a random palette $Q$.

\begin{claim}\label{clm:sampled-optimum}
Almost surely, $Q$ is rainbow, has no $M$-ladder, and satisfies $\Lambda_Q(\bw)=\lambda(Q)=t$ for $\bw\coloneqq(w_i)_{i\in I}$.
\end{claim}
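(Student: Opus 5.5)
The rainbow and ladder-free properties pass to $Q$ almost surely. Both are determined by finitely many tuple indicators at a time, so each defines a closed subset of $\{0,1\}^{I^r}$. Indeed, having a repeated-color admissible tuple is an open condition, since it is witnessed by one tuple. Containing an $M$-ladder is also open, since it is witnessed by finitely many tuples. Every $Q_n$ lies in both closed sets, so the Portmanteau theorem gives that $Q$ lies in them almost surely. For the value, I would first prove $\lambda(Q)\le t$ almost surely. Fix a finite $J\subseteq I$. The law of $Q[J]$ is a limit of the laws of $Q_n[J]$ along the subsequence. Each $Q_n[J]$ has a homomorphism to $P_n$, so $\lambda(Q_n[J])\le\lambda(P_n)\to t$. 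The law of $Q[J]$ is supported on finitely many palettes, each a weak-limit point of palettes with $\lambda\le\lambda(P_n)$. Hence $\lambda(Q[J])\le t$ almost surely, and a countable union over $J$ gives $\lambda(Q)\le t$. Since $\Lambda_Q(\bw)\le\lambda(Q)$ holds by the discussion of countable palettes, it remains to show $\Lambda_Q(\bw)\ge t$ almost surely.

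For the lower bound, I would compute expectations. By construction,
\[
\E\,\Lambda_{Q_n}(\bw)=\sum_{(i_1,\ldots,i_r)}\prod_k w_{i_k}\,\Prb\bigl[(\phi_n(\xi_{i_1}),\ldots,\phi_n(\xi_{i_r}))\in P_n\bigr].
\]
For distinct indices, the sampled points are independent with laws uniform on the $B_{i_k}$. So this sum equals $\frac1{r!}\int_{[0,1]^r}\omega_n$ restricted to tuples whose points lie in pairwise distinct atoms. The full integral $\frac1{r!}\int\omega_n$ equals $\Lambda_{P_n}(\bx^{(n)})=\lambda(P_n)$, because $\phi_n$ has distribution $\bx^{(n)}$. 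The discrepancy comes from tuples where two points lie in the same $B_i$. That part is bounded, via \eqref{eq:f-hessian}, by a constant times $\sum_i\int_{B_i\times B_i}K_n$. By \cref{clm:kernel-convergence} this converges to $\sum_i\int_{B_i\times B_i}K$. The limit is zero, since $K=c-2\langle\boldsymbol{p}_i,\boldsymbol{p}_i\rangle=0$ on $B_i\times B_i$ by \cref{clm:atomic-limit}. Hence $\E\,\Lambda_{Q_n}(\bw)\to t$.

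Passing to $Q$ requires continuity of $\Lambda_\cdot(\bw)$ in the product topology. This holds because, for each finite $J$, the tuples meeting $I\setminus J$ carry weight at most $r\sum_{i\notin J}w_i$, uniformly in the palette. So $\Lambda_\cdot(\bw)$ is a uniform limit of continuous functions, and weak convergence gives $\E\,\Lambda_Q(\bw)=t$. Combined with $\Lambda_Q(\bw)\le\lambda(Q)\le t$ almost surely, this forces $\Lambda_Q(\bw)=\lambda(Q)=t$ almost surely.

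The main obstacle is the step $\lambda(Q[J])\le t$, together with the error estimate for coinciding atoms. A subtle point in the former is that a weak limit of laws could in principle put mass on a palette not seen in any $Q_n$. This cannot happen here, because on a finite $J$ the space of palettes is finite and discrete. For the latter, the bounded integrand and the $L^1$ convergence of $K_n$ must be combined carefully over countably many atoms. I would first fix a finite set of atoms carrying almost all the weight. I would then bound the contribution of the remaining atoms trivially by their total weight, and only afterwards let $n\to\infty$.
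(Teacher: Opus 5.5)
Your proposal is correct and follows the paper's proof essentially step for step. The steps match as follows: closedness of the rainbow and ladder-free conditions with weak convergence; clopen cylinder events on $J^r$ giving $\lambda(Q[J])\le t$; an expectation loss bounded, via \eqref{eq:f-hessian} and a union bound over pairs, by a multiple of $\sum_i\int_{B_i\times B_i}K_n$; and the uniform truncation estimate giving continuity of $\Lambda_\cdot(\bw)$.

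The truncation to finitely many atoms that you flag as the main obstacle is unnecessary. The sets $B_i\times B_i$ are pairwise disjoint and $K$ vanishes on them, so $\sum_i\int_{B_i\times B_i}K_n\le\|K_n-K\|_{L^1}$ holds directly, and this is exactly how the paper concludes.
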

\begin{proof}[Proof of the claim]
Any $M$-ladder is witnessed by finitely many admissible tuples, so it persists under sufficiently small changes in the product topology. Thus the set of palettes containing no $M$-ladder is closed. The rainbow condition is also closed, so weak convergence implies that $Q$ is almost surely rainbow with no $M$-ladder. Fix a nonempty finite $J\subseteq I$ and a palette $R$ on $J$ with $\lambda(R)>t$. If $Q_n[J]=R$, the sampling map gives $R\to P_n$, and hence $\lambda(R)\le\lambda(P_n)$. Since $\lim_{n\to\infty}\lambda(P_n)=t$, this event is impossible for all sufficiently large $n$. It depends only on the coordinates indexed by $J^r$ and is therefore both open and closed, so weak convergence gives $\Prb[Q[J]=R]=\lim_{n\to\infty}\Prb[Q_n[J]=R]=0$. There are countably many choices of $J$ and $R$, so every finite subpalette of $Q$ has Lagrangian at most $t$ almost surely. By the countable-palette convention, $\lambda(Q)\le t$ almost surely.

We show that the fixed weighting $\bw=(w_i)_{i\in I}$ attains $t$ in $Q$. Choose $s_1,\ldots,s_r$ independently and uniformly from $[0,1]$. Since each $\phi_n(s_j)$ has distribution $(x_v^{(n)})_v$, the probability that $(\phi_n(s_1),\ldots,\phi_n(s_r))\in P_n$ is $\lambda(P_n)$. The expectation of $\Lambda_{Q_n}(\bw)$ is
\[
\E[\Lambda_{Q_n}(\bw)]=\Prb\!\left[(\phi_n(s_1),\ldots,\phi_n(s_r))\in P_n\text{ and }\boldsymbol{p}(s_i)\ne\boldsymbol{p}(s_j)\text{ for every }1\le i<j\le r\right].
\]
The second condition says that the points lie in pairwise different sets $B_i$. Indeed, for distinct indices $i_1,\ldots,i_r$, the event $s_j\in B_{i_j}$ for every $j\in[r]$ has probability $w_{i_1}\cdots w_{i_r}$; conditional on this event, the points have the same joint law as $\xi_{i_1},\ldots,\xi_{i_r}$. Terms with repeated indices contribute zero because $Q_n$ is rainbow. Thus $\lambda(P_n)-\E[\Lambda_{Q_n}(\bw)]$ is the probability that the color tuple is admissible and at least 2 sampled points lie in the same $B_i$.

Since $K=0$ on each $B_i\times B_i$ and these sets are pairwise disjoint, we have
\[
\sum_{i\in I}\int_{B_i\times B_i}K_n\le\|K_n-K\|_{L^1}.
\]
By \cref{clm:kernel-convergence}, $\lim_{n\to\infty}\|K_n-K\|_{L^1}=0$. Whenever the color tuple is admissible, $\omega_n(s_1,\ldots,s_r)\ge1$. A union bound over pairs of sampled points, followed by the symmetry of $\omega_n$ and~\eqref{eq:f-hessian}, gives
\[
0\le\lambda(P_n)-\E[\Lambda_{Q_n}(\bw)]
\le\binom r2(r-2)!\sum_{i\in I}\int_{B_i\times B_i}K_n.
\]
It follows that $\lim_{n\to\infty}\E[\Lambda_{Q_n}(\bw)]=t$. To pass to the limiting law, observe that, for fixed $\bw$, the value $\Lambda_Q(\bw)$ depends continuously on the admissible-tuple indicators in the topology of coordinatewise convergence. Indeed, for a nonempty finite set $J\subseteq I$, deleting the tuples that meet $I\setminus J$ while keeping the remaining weights unchanged gives
\[
0\le\Lambda_Q(\bw)-\Lambda_{Q[J]}((w_i)_{i\in J})
\le1-\left(\sum_{i\in J}w_i\right)^r
\le r\sum_{i\in I\setminus J}w_i.
\]
The bound is independent of $Q$. As $J$ increases to $I$, the resulting polynomials depend on finitely many tuple indicators and converge uniformly to $\Lambda_Q(\bw)$, proving continuity. The preceding loss estimate and convergence of laws give $\E[\Lambda_Q(\bw)]=t$. Since $\Lambda_Q(\bw)\le\lambda(Q)\le t$ almost surely, equality holds almost surely throughout, as required.
\end{proof}

It remains to show that a finite subpalette attains the same Lagrangian as the countable palette.

\begin{claim}\label{lem:countable-compression}
Let $P$ be a countable rainbow palette with no $M$-ladder for some integer $M\ge2$. If a probability weighting $\bx$ attains a positive value $\lambda(P)$, then a finite subpalette of $P$ has Lagrangian $\lambda(P)$.
\end{claim}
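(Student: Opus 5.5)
The plan is to reduce to an optimal weighting whose support is pairwise "co-edged". Then an infinite such support will be shown to force an $M$-ladder, so the support is finite.

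First, restrict $P$ to the support of $\bx$. This does not change $\lambda(P)$ and preserves rainbowness and the absence of $M$-ladders, so we may assume every coordinate of $\bx$ is positive. \Cref{lem:hessian} then applies. It gives vectors $\boldsymbol q_v$ with $\|\boldsymbol q_v\|^2=d/2$ and $\mathsf H_{uv}=\|\boldsymbol q_u-\boldsymbol q_v\|^2$, where $d\coloneqq r(r-1)\lambda(P)$. By positivity of the weights, $\mathsf H_{uv}=0$ exactly when $u$ and $v$ lie in no common underlying edge. So the relation $\boldsymbol q_u=\boldsymbol q_v$ partitions $C(P)$ into classes, and no edge meets a class twice.

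On each class $T$, $\Lambda_P$ is therefore affine in $(x_u)_{u\in T}$ with the other weights fixed, with coefficient $\partial_u\Lambda_P$. At $\bx$ all these derivatives equal $r\lambda(P)$ by the gradient identity in the proof of \cref{lem:hessian}. Hence moving the whole weight of $T$ onto a single representative does not change $\Lambda_P$ to first order. Because $\Lambda_P$ contains no monomial with two colors of $T$, it does not change at all while the other coordinates are held fixed.

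I would perform this consolidation for all classes simultaneously. Write $\bx=\bx^{(0)},\bx^{(1)},\ldots$ for the weightings obtained by consolidating an enumeration of the classes one at a time. Each $\bx^{(k)}$ is again optimal, because its value equals $\lambda(P)$, the maximum. Consequently \cref{lem:hessian} reapplies at every stage, so the derivative identities needed for the next step hold. The $\ell^1$-continuity estimate for $\Lambda_P$ from the proof of \cref{thm:compactness} then passes the value to the limit weighting $\by$. The weighting $\by$ is optimal, and any two colors in its support lie in a common underlying edge. If that support is finite, the induced finite subpalette on it has Lagrangian $\lambda(P)$ and we are done.

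The main step is to show that an infinite support $T$, in which every two colors are co-edged, contains an $M$-ladder. I would build the ladder greedily, keeping an infinite candidate set $T_i$ at each stage.
\begin{itemize}
\item Pick $b_i\in T_i$.
\item For each $j\in T_i\setminus\{b_i\}$, choose a core $A$ with $A\cup\{b_i,j\}\in H(P)$; by rainbowness $A$ avoids $b_i$ and $j$.
\item If a single core $A_i$ serves infinitely many $j$, let $T_{i+1}$ be those $j$ with $A_i$ removed, and continue.
\end{itemize}
After $M-1$ rounds this produces an $M$-ladder, contradicting the hypothesis.

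The obstacle is that the cores may all be different, and here I would use the weights. The identity $\sum_v y_v\mathsf H_{b_iv}=d$ expresses $\mathsf H_{b_iv}$ as a sum of $y_A$ over cores $A$ joining $b_i$ and $v$. In the worst case every core serves only finitely many $j$. Then the link of $b_i$ is spread over infinitely many cores, and I would try to exploit that the unit-sphere vectors $\boldsymbol q_v$, $v\in T$, satisfy $\sum_vy_v\boldsymbol q_v=0$ while being pairwise at positive distance. I expect this to force a second-order improvement: perturb the weights along a direction supported on far-apart tail colors, in the manner of the weight-transfer argument of \cref{lem:hessian}(iv). Such an improvement contradicts optimality, so some core must serve infinitely many $j$.

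If this direct step resists, the fallback is to replace "a single core" by "a positive-measure family of cores". I would then rerun the nested-neighborhood selection from the proof of \cref{clm:atomic-limit}, now on the tail of $T$ rather than on a nonatomic part.
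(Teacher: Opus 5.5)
Your reductions are sound and use the paper's ingredients. These are: restricting to the support, grouping colors by the vectors $\boldsymbol q_v$ of \cref{lem:hessian}, consolidating each class, and building the ladder greedily once each stage has a core serving infinitely many candidates. Your $\ell^1$-limit handles infinitely many classes, although the paper consolidates only after showing that there are finitely many.

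However, the existence of a core serving infinitely many candidates is the entire content of the claim, and you leave it open. It also need not hold for an arbitrarily chosen $b_i\in T_i$. The expansion $\mathsf H_{b_ij}=\sum_A m_{A\cup\{b_i,j\}}\prod_{w\in A}x_w$ has coefficients at most $r!$ and summable positive weights. So dominated convergence shows that some core serves infinitely many $j\in T_i$ if and only if $\mathsf H_{b_ij}\not\to0$ along $T_i$. That is, it fails exactly when $\boldsymbol q_j\to\boldsymbol q_{b_i}$ in norm. Your sketch gives no reason to exclude this accumulation, and a second-order perturbation cannot detect it. By \cref{lem:hessian}, $\bz^{\mathsf T}\mathsf H\bz=-2\|\sum_u z_u\boldsymbol q_u\|^2\le0$ for every zero-sum $\bz$, whatever the configuration of the vectors. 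The fallback through \cref{clm:atomic-limit} is not worked out. It would still need a lower bound on $\mathsf H_{b_ij}$ for the chosen spine vertices, which is the same missing input.

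The missing idea is to choose spine vertices away from the at most one exceptional accumulation point, and then convert a positive limit of Hessian entries into a persistent core. The paper does this by weak compactness:
\begin{enumerate}
\item Take colors $b_n$ with pairwise distinct vectors.
\item Pass to a subsequence with $\boldsymbol q_{b_n}\rightharpoonup\boldsymbol q_*$ and, by a diagonal argument, with every coefficient $m_{K\cup\{b_n\}}$ eventually constant.
\item Then $\lim_n\mathsf H_{b_nv}=d-2\langle\boldsymbol q_*,\boldsymbol q_v\rangle$, which vanishes only if $\boldsymbol q_v=\boldsymbol q_*$ has full norm. Hence it is positive for all but at most one selected $v$.
\item Dominated convergence, applied term by term to the eventually constant coefficients, yields a fixed core $A$ with $A\cup\{v,b_n\}\in H(P)$ for all large $n$.
\end{enumerate}
This is exactly what your greedy construction needs.

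Within your own scheme, the equivalent repair is this. At each stage, choose $b_i\in T_i$ such that $(\boldsymbol q_j)_{j\in T_i}$ does not converge in norm to $\boldsymbol q_{b_i}$. By pairwise distinctness of the vectors, this excludes at most one color. Then apply the dominated-convergence equivalence above to obtain the core $A_i$.
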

\begin{proof}[Proof of the claim]
Discard zero-weight colors and apply \cref{lem:hessian} to $P$ with its optimal weighting $\bx$, writing $d\coloneqq r(r-1)\lambda(P)$ and $\boldsymbol{q}_v$ for the resulting vectors. Their common norm is $\sqrt{d/2}$. Suppose first that the vectors $\boldsymbol{q}_v$ take infinitely many distinct values. Choose colors $b_1,b_2,\ldots$ with pairwise distinct vectors. Their boundedness allows passage to a subsequence along which $\boldsymbol{q}_{b_n}$ converges weakly to $\boldsymbol{q}_*$, meaning that $\lim_{n\to\infty}\langle\boldsymbol{q}_{b_n},\bz\rangle=\langle\boldsymbol{q}_*,\bz\rangle$ for every vector $\bz$ in the Hilbert space. For each $(r-1)$-set $K$ of colors, set $m_{K\cup\{b_n\}}\coloneqq0$ when $b_n\in K$. There are only countably many such sets $K$, and each coefficient takes values in $\{0,1,\ldots,r!\}$. A diagonal argument therefore gives a further subsequence along which, for every fixed $K$, the coefficient $m_{K\cup\{b_n\}}$ is constant for all sufficiently large $n$.

For a fixed color $v$, we have $\lim_{n\to\infty}\mathsf{H}_{b_nv}=d-2\langle \boldsymbol{q}_*,\boldsymbol{q}_v\rangle$.
The right side is nonnegative. It is zero only if $\|\boldsymbol{q}_*\|=\sqrt{d/2}$ and $\boldsymbol{q}_v=\boldsymbol{q}_*$. Hence it is positive for all but at most one of the selected colors $b_i$.

Expanding the Hessian entry gives
\[
\mathsf{H}_{b_nv}=\sum_{\substack{A\subseteq C(P)\setminus\{v,b_n\}\\|A|=r-2}}
 m_{A\cup\{v,b_n\}}\prod_{w\in A}x_w.
\]
For all sufficiently large $n$, we have $b_n\ne v$. Using the zero-coefficient convention above, we may take the sum over the fixed family of $(r-2)$-sets $A\subseteq C(P)\setminus\{v\}$. For each such $A$, the preceding diagonal choice with $K=A\cup\{v\}$ makes $m_{A\cup\{v,b_n\}}$ eventually constant. Since $0\le m_{A\cup\{v,b_n\}}\le r!$ and $\sum_{A\in\binom{C(P)}{r-2}}\prod_{w\in A}x_w\le1/(r-2)!$, the dominated convergence theorem for series gives
\[
\lim_{n\to\infty}\mathsf{H}_{b_nv}
=\sum_{\substack{A\subseteq C(P)\setminus\{v\}\\|A|=r-2}}
\left(\lim_{n\to\infty}m_{A\cup\{v,b_n\}}\right)\prod_{w\in A}x_w.
\]
If $\lim_{n\to\infty}\mathsf{H}_{b_nv}>0$, at least one summand on the right is positive. Its core $A$ therefore satisfies $\lim_{n\to\infty}m_{A\cup\{v,b_n\}}\ge1$. Since this coefficient is eventually constant, we have $A\cup\{v,b_n\}\in H(P)$ for every sufficiently large $n$.

We now select indices $n_1<\cdots<n_M$, avoiding the possible single color with zero Hessian limit. After choosing $b_{n_i}$ for $i<M$, choose a core $A_i$ by the preceding argument with $v=b_{n_i}$. Then choose $n_{i+1}>n_i$ so large that $b_{n_{i+1}}\notin\bigcup_{j=1}^i A_j$ and $A_j\cup\{b_{n_j},b_{n_{i+1}}\}\in H(P)$ for every $j\in[i]$, again avoiding the exceptional color. These choices give $A_i\cap\{b_{n_i},\ldots,b_{n_M}\}=\varnothing$ for every $i\in[M-1]$, and every $A_i\cup\{b_{n_i},b_{n_j}\}$ with $i<j$ is an edge. Thus the selected spine vertices and cores form an $M$-ladder, a contradiction.

The vectors $\boldsymbol{q}_v$ thus take only finitely many distinct values. Partition the colors according to those values. If $u,v$ belong to one part, then $\mathsf{H}_{uv}=0$. Since all weights are positive and all coefficients are nonnegative, no underlying edge contains both colors. The polynomial is therefore affine in the weights of any one part when the weights in the other parts are fixed. At the current optimal weighting, the coefficients of $x_v$ in this affine function are the partial derivatives $\partial_v\Lambda_P(\bx)$, which are equal for all colors $v$ in that part by the two-coordinate first-order condition. Concentrating the total weight of that part on one color leaves the value unchanged. The resulting weighting is still optimal, so the same argument applies successively to all the other parts. Since there are only finitely many parts, we obtain a finitely supported probability weighting attaining $\lambda(P)$.
\end{proof}

By \cref{clm:sampled-optimum,lem:countable-compression}, a finite subpalette of $Q$ has Lagrangian $t$ almost surely.

To obtain a fixed finite palette, note that there are countably many finite subsets $J\subseteq I$ and only finitely many palettes on each. Hence some fixed finite $J$ and some fixed palette $P$ on $J$ satisfy $\Prb[Q[J]=P]>0$ and $\lambda(P)=t$. The event $Q[J]=P$ depends only on the finitely many coordinates indexed by $J^r$, so it is both open and closed in the product topology. Weak convergence therefore gives $\lim_{n\to\infty}\Prb[Q_n[J]=P]=\Prb[Q[J]=P]>0$. For every sufficiently large $n$ along the chosen subsequence there is therefore a realization with $Q_n[J]=P$. The homomorphism $i\mapsto\phi_n(\xi_i)$ then gives $P\to P_n$. Composing with the inclusions into the original palettes, before restriction to the supports of the chosen optimal weightings, proves the statement of the theorem.
\end{proof}

Note that the finite palette in \cref{thm:compactness} may depend on the whole convergent sequence. The proof gives no uniform bound on optimal support sizes in terms of $r$ and $M$.

\begin{proof}[Proof of \cref{thm:attainment}]
For (i), if $\pi_{r-2}(\Fu)=0$, use a palette with no admissible tuples. Otherwise, by~\cref{thm:palette-tools}(i), choose finite $\Fu$-deficient palettes $P_n$ with $\lim_{n\to\infty}\lambda(P_n)=\pi_{r-2}(\Fu)$. By \cref{lem:finite-certificate}, the palettes $P_n$ are rainbow and contain no $M$-ladder for some $M$ independent of $n$. \Cref{thm:compactness} gives a finite palette $P$ with $\lambda(P)=\pi_{r-2}(\Fu)$ and $P\to P_n$ along a subsequence. If a member of $\Fu$ were $P$-colorable, it would be $P_n$-colorable, a contradiction. Thus $P$ is $\Fu$-deficient.

For (ii), \cref{lem:finite-values} gives $\Du_r\subseteq\Pu r$. The polynomial description of the densities below $4r^{-r}$ follows from the rainbow-palette discussion in \cref{sec:palettes}.

To prove algebraicity, we use the fact that the maximum of a rational-coefficient polynomial $f$ over $\mathbb{S}_m$ is a real algebraic number. Its value $y$ is specified by
\[
\exists\bx\in\mathbb{S}_m\ \bigl(f(\bx)=y\ \wedge\ \forall\bz\in\mathbb{S}_m\ f(\bz)\le y\bigr).
\]
Quantifier elimination over real closed fields~\cite{bpr} describes this singleton by finitely many polynomial sign conditions over $\Q$. At least one nonzero polynomial in this description must vanish at $y$; otherwise all its sign conditions would remain unchanged in a neighborhood of $y$, contradicting that the set is a singleton. Thus $y$ is algebraic. Countability follows because there are countably many finite palettes.

By \cref{thm:app-fr-realization}, $\Du_r\subseteq\Pufin r\subseteq\Pu r$. Finite attainment makes these inclusions equalities below $4r^{-r}$. The palette with the single tuple $(a,a,c_3,\ldots,c_r)$, where $a,c_3,\ldots,c_r$ are pairwise distinct, has Lagrangian $(2/r)^2(1/r)^{r-2}=4r^{-r}$. The same realization theorem puts this endpoint in all three sets.
\end{proof}

\section{Nonclosedness}\label{sec:low-topology}
We prove \cref{thm:nonclosed}(iii) by localizing Pikhurko's ordinary-density construction and applying finite attainment to exclude the resulting transcendental limit points. We then give two examples of finite-palette values below $4/27$.

\begin{proof}[Proof of \cref{thm:nonclosed}(iii)]
We first locate continuum many ordinary Tur\'an densities in a short interval. A number $\alpha\in[0,1)$ is an \emph{ordinary non-jump} for $3$-graphs if $(\alpha,\alpha+\eps)\cap\Pord3\ne\varnothing$ for every $\eps>0$. Frankl and R\"odl~\cite{frankl-rodl} proved that ordinary non-jumps exist in every uniformity at least 3. For $m\ge3$, we have $\lambda(K_m^{(3)})=(m-1)(m-2)/m^2$.

\begin{claim}\label{lem:localized-ordinary}
Let $m\ge3$. If $\alpha$ is an ordinary non-jump and $\alpha<(m-1)(m-2)/m^2$, then $|\Pord3\cap[(m-1)(m-2)/m^2,(m-2)/(m+1)]|=2^{\aleph_0}$.
\end{claim}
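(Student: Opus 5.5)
The plan is to transport a continuum of ordinary $3$-graph densities lying just above $\alpha$ into the target interval by a monotone, injective, continuous "join with $K_m$" operation. Pikhurko's pattern framework, that is, densities of recursive blow-up constructions and the closedness of $\Pord3$ from~\cite[Proposition 1]{pikhurko}, justifies this operation.

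\textbf{Step 1: a continuum near $\alpha$.} Since $\alpha$ is a non-jump, there are values $\beta_1>\beta_2>\cdots$ in $\Pord3$ converging to $\alpha$. Each $\beta_i$ is realized by a family whose extremal constructions are Lagrangian blow-ups of finite $3$-graphs $G_i$, which we may take with $\lambda(G_i)=\beta_i$ after Brown--Simonovits approximation. I would then follow Pikhurko's proof of~\cite[Theorem 2]{pikhurko}. For each infinite $0/1$ sequence $\sigma$, combine the patterns $G_i$ with $\sigma_i=1$ into a single recursive pattern, arranged so that the weights at the optimum decay quickly. Let $\Fu_\sigma$ be the family of $3$-graphs not embeddable in any blow-up of this pattern. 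The density $d(\sigma)$ is then a convergent series in the $\beta_i$-contributions. Choosing the $G_i$ sparse enough makes $\sigma\mapsto d(\sigma)$ injective, exactly as in Pikhurko's binary-expansion argument. If the non-jump sequence is chosen to converge fast enough, all $d(\sigma)$ also lie in $[\alpha,\alpha+\eps]$ for any prescribed $\eps>0$. This yields $2^{\aleph_0}$ values of $\Pord3$ in $[\alpha,\alpha+\eps]$.

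\textbf{Step 2: the localizing map.} For a pattern $\mathcal P$ of density $\beta$, form the $m$-part pattern $J_m(\mathcal P)$. It contains all transversal triples across $m$ parts, and one part additionally carries $\mathcal P$ recursively. Its density is
\[
g(\beta)=\max_{\bx\in\mathbb S_m}\Bigl(6\,e_3(\bx)+\beta x_1^3\Bigr)\big/\bigl(1-\beta\text{-recursion correction}\bigr).
\]
More precisely, $g(\beta)$ is the fixed-point value of the recursion, which Pikhurko's pattern theorem places in $\Pord3$ whenever $\beta\in\Pord3$ comes from a pattern. I would verify three facts about $g$. First, $g(0)=\lambda(K_m^{(3)})=(m-1)(m-2)/m^2$. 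Second, $g$ is continuous and strictly increasing in $\beta$, by an envelope-theorem argument, since at the optimum $x_1>0$ whenever $\beta>0$. Third, $g(\beta)\le(m-2)/(m+1)$ for all $\beta\le\lambda(K_m^{(3)})$. The intended extremal comparison for the third fact is that $\beta$ equal to the $K_m$ value corresponds to the part-$1$ structure reproducing $K_m$ recursively. That behaves no better than an $(m+1)$-type configuration, whose value is $(m-2)/(m+1)$. Given these facts, $g$ maps $[\alpha,\alpha+\eps]$ injectively into $\bigl[(m-1)(m-2)/m^2,(m-2)/(m+1)\bigr]$ once $\alpha+\eps<(m-1)(m-2)/m^2$. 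Applying it to the continuum from Step 1 proves the claim.

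\textbf{Main obstacle.} I expect the main obstacle to be the explicit upper bound $g(\beta)\le(m-2)/(m+1)$ in Step 2, together with the monotonicity of $g$. This requires solving the symmetric optimization: set $x_2=\cdots=x_m$, reduce to one variable $x_1$, and compare with the recursive fixed point. I would first try to show that the maximizing $x_1$ is increasing in $\beta$, and then evaluate the boundary case $\beta=(m-1)(m-2)/m^2$ directly. If the bound turns out to be loose or false there, I would instead restrict to $\beta\le\alpha+\eps$ with $\eps$ small. There the value is arbitrarily close to $g(\alpha)<g(\lambda(K_m^{(3)}))$, and only strict inequality at the endpoint is needed.
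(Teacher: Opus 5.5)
\textbf{Gap in Step~1.} Everything else in your plan rests on Step~1, and it is not justified. Pikhurko's proof does not produce continuum many densities in $[\alpha,\alpha+\eps]$. His values $\tau_A$ come from an $m$-part construction that contains all transversal triples, with a blowup of $H_{a_1}$ in one class and the recursion in another. They therefore all satisfy $\tau_A\ge(m-1)(m-2)/m^2$, not $\tau_A\le\alpha+\eps$.

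Your sketch ("combine the patterns $G_i$ with $\sigma_i=1$ into a single recursive pattern") gives no mechanism that would place the values near $\alpha$. Suppose the pieces are combined without edges between them. Then the Lagrangian is the maximum of the pieces, because $\sum_i\beta_it_i^3\le\max_i\beta_i$ on the simplex. So $d(\sigma)=\beta_{\min\{i\colon\sigma_i=1\}}$, which gives only countably many values, and the later $\beta_i$ never contribute a convergent series. For deeper levels to contribute additively, some cross edges must carry positive weight. In Pikhurko's argument this is exactly the role of the transversal triples, and they force density at least $\lambda(K_m^{(3)})$. Your plan supplies no host of density near $\alpha$ for which the recursion contributes additively, the values stay in $[\alpha,\alpha+\eps]$, and $\sigma\mapsto d(\sigma)$ stays injective. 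There is also a secondary issue. For infinite $\sigma$ the values $d(\sigma)$ do not come from finite patterns, so $g(d(\sigma))\in\Pord3$ would need a forbidden family defined from the constructions themselves, not the pattern theorem.

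\textbf{How to fix it.} No localization step is needed. Your map $J_m$ is exactly one layer of Pikhurko's recursion. His $\tau_A$ therefore already lie above $(m-1)(m-2)/m^2$, and his comparison argument gives pairwise distinctness. The only new ingredient is the upper bound. This is not the obstacle you anticipated, and it needs neither a symmetric optimization nor monotonicity of maximizers.

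Write $E_3=\sum_{i<j<k}x_ix_jx_k$ and $Q_3=\sum_{i\ne j}x_i^2x_j$.
\begin{enumerate}
\item AM--GM applied to the six terms $x_i^2x_j$ of each triple gives $(m-2)Q_3\ge6E_3$.
\item Combined with the identity $1-\sum_ix_i^3=3Q_3+6E_3$, this yields $6E_3\le\frac{m-2}{m+1}\bigl(1-\sum_ix_i^3\bigr)$.
\item Hence $6E_3+\beta x_1^3+\beta'x_2^3\le\frac{m-2}{m+1}$ whenever $\beta,\beta'\le\frac{m-2}{m+1}$.
\item Since $\lambda(H_{a_1})<(m-1)(m-2)/m^2<\frac{m-2}{m+1}$, induction over the finite stages of the recursion gives $6|G|/N^3\le\frac{m-2}{m+1}$ for every finite construction $G$ on $N$ vertices.
\item Pikhurko identifies $\tau_A$ with the limit of these maximum densities, so $\tau_A\le\frac{m-2}{m+1}$.
\end{enumerate}
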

\begin{proof}[Proof of the claim]
Pikhurko~\cite[Section 6, proof of Theorem 2]{pikhurko} constructs distinct ordinary densities from infinite subsets of $\N$. His proof permits any ordinary non-jump $\alpha$ and any $m$ with $\alpha<(m-1)(m-2)/m^2$. It selects finite $3$-graphs $H_i$ whose normalized Lagrangians lie in $(\alpha,(m-1)(m-2)/m^2)$ and decrease sufficiently rapidly to $\alpha$. Each infinite set $A=\{a_1<a_2<\cdots\}\subseteq\N$ specifies a recursive construction on an $m$-part partition. Add every triple meeting 3 distinct classes, place a blowup of $H_{a_1}$ in the second class, and repeat the construction with $A\setminus\{a_1\}$ in the first class. Condition (29) in that proof specifies the required rate of decrease, and the comparison argument following (30) shows that the resulting ordinary densities $\tau_A$ are pairwise distinct. The transversal triples give $\tau_A\ge(m-1)(m-2)/m^2$.

We prove the additional upper bound $\tau_A\le (m-2)/(m+1)$. For $\bx\in\mathbb{S}_m$, let $E_3\coloneqq\sum_{1\le i<j<k\le m}x_ix_jx_k$ and $Q_3\coloneqq\sum_{\substack{i,j\in[m]\\i\ne j}}x_i^2x_j$. For each triple of indices, applying AM--GM to its 6 terms of the form $x_i^2x_j$ gives a sum at least $6x_ix_jx_k$. Summing over triples gives $(m-2)Q_3\ge6E_3$. Since $1-\sum_{i\in[m]} x_i^3=3Q_3+6E_3$, we obtain $6E_3\le \frac{m-2}{m+1}\left(1-\sum_{i\in[m]} x_i^3\right)$. We also have $(m-1)(m-2)/m^2<(m-2)/(m+1)$. Suppose the normalized Lagrangian of the recursive part is at most $(m-2)/(m+1)$. If $x_i$ is the total weight in class $i$, the normalized polynomial of the whole construction is at most
\[
6E_3+\frac{m-2}{m+1}x_1^3+\lambda(H_{a_1})x_2^3
\le6E_3+\frac{m-2}{m+1}\sum_{i\in[m]} x_i^3\le\frac{m-2}{m+1}.
\]
Starting with an edgeless $3$-graph at the final stage of the recursion, induction gives this bound for every finite stage. In particular, every finite $3$-graph $G$ on $N$ vertices obtained from this construction satisfies $6|G|/N^3\le (m-2)/(m+1)$. Pikhurko identifies $\tau_A$ with the limit of the maximum edge densities of these $3$-graphs as $N$ tends to infinity. Since $\lim_{N\to\infty}N^3/(6\binom N3)=1$, we obtain $\tau_A\le(m-2)/(m+1)$. The pairwise distinct values $\tau_A$ prove the claim.
\end{proof}

Liu and Mubayi~\cite[Theorem 1.1]{liumubayi} proved that $4/9$ is an ordinary non-jump. Since $4/9<12/25$, applying \cref{lem:localized-ordinary} with $m=5$ gives $|\Pord3\cap[12/25,1/2]|=2^{\aleph_0}$.

The set $\mathcal S\coloneqq\frac16(\Pord3\cap[12/25,1/2])\subseteq[2/25,1/12]$ has cardinality continuum. By ordinary Lagrangian approximation and \cref{lem:orientation-lift}, each point of $\mathcal S$ is the limit of a sequence in $\Du_3$. Since $1/12<4/27$, the sequence eventually lies in $\Du_3\cap[0,4/27]$. Thus $\mathcal S\subseteq\overline{\Du_3\cap[0,4/27]}$. Deleting the countably many algebraic numbers leaves continuum many transcendental points. Every element of $\Du_3\cap[0,4/27]$ is algebraic, so none of these points belongs to $\Du_3\cap[0,4/27]$.

We now lift these limit points to uniformity $r$ and use finite attainment to exclude them from $\Pu r$. Fix a transcendental point $t\in\mathcal S$ and finite $3$-palettes $P_n$ with $\lim_{n\to\infty}\lambda(P_n)=t$. For each fixed $r\ge3$, \cref{lem:orientation-lift} gives $\lambda(\operatorname{lift}_r(P_n))=27r^{-r}\lambda(P_n)$, and hence $\lim_{n\to\infty}\lambda(\operatorname{lift}_r(P_n))=27r^{-r}t$. Each term belongs to $\Pu r$ by \cref{lem:finite-values}, while $27r^{-r}t\in[(54/25)r^{-r},(9/4)r^{-r}]\subseteq(r^{-r},4r^{-r})$. This limit is transcendental because $27r^{-r}$ is a nonzero rational. By \cref{thm:attainment}(ii), it does not belong to $\Pu r$. Multiplication by $27r^{-r}$ is injective, so the continuum many choices of $t$ prove (iii).
\end{proof}

The earlier non-jump $12/25$ of Yan and Peng~\cite[Theorem 1.2]{yanpeng} also suffices to prove nonclosedness; the value $4/9$ gives the sharper interval in \cref{thm:nonclosed}(iii).

The proof gives continuum many missing limit points of $\Du_3\cap[0,4/27]$ in $[2/25,1/12]$ and of $\Pu r$ in $[(54/25)r^{-r},(9/4)r^{-r}]$. It does not show that these intervals lie entirely in $\overline{\Du_3\cap[0,4/27]}$ and $\overline{\Pu r}$, respectively, or that a prescribed transcendental number belongs to $\overline{\Du_3}$.

\begin{fact}\label{fact:low-density-examples}
The set $\Du_3\cap[0,4/27]$ is countably infinite, is not discrete, and contains $\sqrt3/18$.
\end{fact}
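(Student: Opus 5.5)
Three things need proving: that $\Du_3\cap[0,4/27]$ is countably infinite, that it is not discrete, and that it contains $\sqrt3/18$. Countability comes for free from \cref{thm:attainment}(ii), since there are only countably many finite palettes. To show the set is infinite and not discrete, I will exhibit a strictly monotone sequence of finite-palette Lagrangians in $[0,4/27)$ that converges to a value which is itself a finite-palette Lagrangian in that range. Every value will be computed through \cref{fact:low-density-multiplicity}: a rainbow palette with edge multiplicities $m_e\in\{1,2,3\}$ has polynomial $\sum_e m_e\prod_{v\in e}x_v$, so each value is a weighted hypergraph Lagrangian.

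For non-discreteness I would use the palette that chooses one permutation of each triple of $K_n^{(3)}$. Its Lagrangian is
\[
\max_{\bx\in\mathbb{S}_n}\sum_{e\in\binom{[n]}3}\prod_{v\in e}x_v=\frac{(n-1)(n-2)}{6n^2},
\]
attained at the uniform weighting; this is the Lagrangian of $K_n^{(3)}$ divided by $3!$, as in \cref{lem:orientation-lift}(i). These values increase strictly to $1/6$, but $1/6>4/27$, so they leave the range. I therefore need a family that accumulates below $4/27$. One candidate is a single edge with multiplicity $3$, which has value $3/27=1/9$. Adding to it a growing structure of multiplicity-$1$ edges should push the value slightly upward. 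My preferred construction is a ``star of pendant edges'': edges $\{a,b,c_i\}$ for $i\in[n]$, with multiplicity chosen so that the optimum of $x_ax_b\sum_i x_{c_i}$ stays at $1/27$ times the multiplicity. That fails, because the value does not depend on $n$.

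The better route is a sequence of palettes with homomorphisms between them whose optimal weights have shrinking support differences. Concretely, take $K_4^{(3)}$ minus an edge, or a complete $3$-graph with mixed multiplicities, and check the values numerically. The simplest reliable source is $\frac{1}{6}\lambda(H)$ for $3$-graphs $H$ whose Lagrangians accumulate at a value below $8/9$. By Frankl--Füredi-type results on small Lagrangians, $\lambda(K_4^{(3)-})$ and the Lagrangians of $K_4^{-}$-like blowups accumulate strictly from one side at $\lambda(K_4^{(3)})\cdot$const. Alternatively, the non-jump values used for \cref{thm:nonclosed}(iii) already give accumulation points inside $[2/25,1/12]$, and finitely many of the approximants suffice. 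In fact, the argument in the proof of \cref{thm:nonclosed}(iii) shows $\mathcal S\subseteq\overline{\Du_3\cap[0,4/27]}$ with $\mathcal S$ nonempty. A limit point of a set means the set is infinite, and if that limit point lies in the set, the set is not discrete. So the cleanest proof is this: $\Du_3\cap[0,4/27]$ has a limit point, hence is infinite. For non-discreteness proper, I need an accumulation point that belongs to the set. I would attempt Pikhurko's construction with a finite $A$-truncation, where the values $\tau_{A}$ for finite sets $A$ are themselves finite-palette Lagrangians, so that $\tau_{\{a_1,\dots,a_k\}}\to\tau_{\{a_1,\dots,a_{k-1}\}}$ as $a_k\to\infty$ realizes an accumulation at a member of the set. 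This is the main obstacle: it requires checking that the finitely truncated recursive construction has Lagrangian equal to the truncated limit, and that the values converge as the last index tends to infinity. That convergence should follow because $\lambda(H_{a_k})\to\alpha$ continuously.

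For $\sqrt3/18$, I would search for a small rainbow palette whose optimum has irrational weights. The natural candidate is the $3$-graph on $\{1,2,3,4\}$ with edges $123$ of multiplicity $2$ and $124$, $134$ of multiplicity $1$, or a similar configuration. I would solve the Lagrange conditions, which force the partial derivatives to be equal, and verify that the maximum equals $\sqrt3/18\approx0.0962<4/27$. For example, with edges $\{1,2,3\}$ and $\{1,2,4\}$ together with $\{3,4,\cdot\}$-type edges, one obtains a quadratic whose root is $\sqrt3$. I would also check the boundary faces of the simplex, so that the interior critical point really is the global maximum.
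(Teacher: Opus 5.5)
\textbf{There are genuine gaps: non-discreteness and the value $\sqrt3/18$ are not established.} Your countability argument is fine. Deducing infiniteness from $\mathcal S\subseteq\overline{\Du_3\cap[0,4/27]}$ in the proof of \cref{thm:nonclosed}(iii) is also valid, though heavy.

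\textbf{Non-discreteness.} You correctly note that you need an accumulation point lying in the set, but the Pikhurko truncation does not supply one. The convergence $\tau_{\{a_1,\ldots,a_k\}}\to\tau_{\{a_1,\ldots,a_{k-1}\}}$ as $a_k\to\infty$ is false. Adding $a_k$ replaces the edgeless first class at stage $k$, which carries positive weight at the optimum, by a new level containing all transversal triples among $m$ subclasses. This gain is present even if $H_{a_k}$ is deleted, so it does not vanish as $a_k\to\infty$. The actual limit is the value with $\lambda(H_{a_k})$ replaced by the non-jump $\alpha$, and nothing makes that a finite-palette Lagrangian.

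The missing idea is elementary. On colors $z,b_1,\ldots,b_m$, take the tuples $(z,b_i,b_j)$ for $i<j$. Since $\sum_{i<j}x_{b_i}x_{b_j}=\frac12\bigl((1-x_z)^2-\sum_i x_{b_i}^2\bigr)$, Cauchy--Schwarz gives equal $b$-weights at the optimum. The Lagrangian is therefore $\max_{0\le u\le1}\frac12(1-\frac1m)u(1-u)^2=\frac2{27}(1-\frac1m)$. These values increase strictly to $2/27$, which is the Lagrangian of two permutations of a single rainbow triple. This gives infiniteness and non-discreteness at once. Your pendant star failed because the link of the apex was a single pair. Making the link of one color a complete graph on $m$ colors produces the dependence on $m$, and a doubled triple realizes the limit.

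\textbf{The value $\sqrt3/18$.} You give no verified palette, and your explicit candidate fails. For $2x_1x_2x_3+x_1x_2x_4+x_1x_3x_4$, fix $x_1=a$ and $p=x_2+x_3$. The factor multiplying $a$ is at most $p^2/2+(1-a-p)p\le(1-a)^2/2$, so the Lagrangian is only $2/27$, attained with $x_4=0$.

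A palette that works takes one permutation of each of $123,124$ and two permutations of each of $134,234$. Its polynomial is $x_1x_2(x_3+x_4)+2(x_1+x_2)x_3x_4$. For fixed $t=x_3+x_4$, AM--GM shows it is maximized at $x_1=x_2=(1-t)/2$ and $x_3=x_4=t/2$, with value $(t-t^3)/4$. Its maximum over $t\in[0,1]$ is $\sqrt3/18$, at $t=1/\sqrt3$. This one-variable reduction already covers the boundary of the simplex, so no separate check of faces is needed.
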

\begin{proof}
There are only countably many finite palettes up to relabeling. For every $m\ge2$, the palette on $z,b_1,\ldots,b_m$ with tuples $(z,b_i,b_j)$ for $i<j$ has polynomial $x_z\sum_{1\le i<j\le m}x_{b_i}x_{b_j}$. For fixed $x_z$, we have $\sum_{1\le i<j\le m}x_{b_i}x_{b_j}=\frac12((1-x_z)^2-\sum_{i\in[m]}x_{b_i}^2)$. Cauchy--Schwarz therefore shows that the maximum is attained when $x_{b_i}=(1-x_z)/m$ for every $i\in[m]$. Thus its Lagrangian is
\[
\max_{0\le u\le1}\frac12\left(1-\frac1m\right)u(1-u)^2
=\frac2{27}\left(1-\frac1m\right).
\]
These values converge to $2/27$, which is realized by a palette consisting of 2 permutations of one rainbow triple. For an irrational example, select 1 permutation of each of $123,124$ and 2 permutations of each of $134,234$. The polynomial is $x_1x_2(x_3+x_4)+2(x_1+x_2)x_3x_4$. For fixed $t\coloneqq x_3+x_4$, the polynomial is maximized when $x_1=x_2=(1-t)/2$ and $x_3=x_4=t/2$, and its value is $(t-t^3)/4$. Its maximum on $[0,1]$ is $\sqrt3/18<4/27$, attained at $t=1/\sqrt3$.
\end{proof}

\section{A 12-uniform palette construction}\label{sec:compiler}
We construct a 12-uniform palette whose Lagrangian is determined by the Lagrangian of a 3-uniform palette $P$. This construction will give the case $r=12$ of \cref{thm:seed}. Under probability weightings, the base part has maximum $4\cdot12^{-12}$. At an optimal probability weighting of the constructed palette, the additional part contributes $12^{-12}\lambda(P)n^{-8}$ to leading order. Averaging the auxiliary weights will make the dependence on $\lambda(P)$ exact. \Cref{fig:twelve-palette} gives an overview of the construction; the notation is introduced below.

\begin{figure}[!htb]
\centering
\begin{tikzpicture}[x=1cm,y=1cm,font=\small,
  decomposition box/.style={draw=black!40,rounded corners=2pt,align=center,inner sep=4pt},
  decomposition branch/.style={decomposition box,text width=6.6cm,minimum height=1.05cm},
  decomposition factor/.style={decomposition box,text width=1.9cm,minimum height=1.15cm,font=\footnotesize},
  decomposition left factor/.style={decomposition factor,minimum height=1.55cm},
  decomposition link/.style={draw=black!55,line width=.55pt}]
\node[decomposition box,fill=black!3] (root) at (0,0) {$\mathcal H_{n,P}$};
\node[decomposition branch,fill=blue!5] (base) at (-3.7,-1.12)
  {$\Lambda_{\rm base}=2T(\Phi_1+\Phi_2)\prod_{\ell=1}^8t_\ell$\\[2pt]
   \footnotesize $8+3+1=12$ entries};
\node[decomposition branch,fill=orange!7] (add) at (3.7,-1.12)
  {$\Lambda_{\rm add}=\sum_{j=0}^{q-1}R_jD_j(\bz)$\\[2pt]
   \footnotesize $9+3=12$ entries};
\draw[decomposition link] (root.south) -- (0,-.46) -| (base.north);
\draw[decomposition link] (0,-.46) -| (add.north);

\node[decomposition left factor] (t) at (-6.0,-2.91)
  {$\prod_{\ell=1}^8t_\ell$\\8 $t$-colors};
\node[decomposition left factor] (input) at (-1.4,-2.91)
  {$T$\\1 color from $C(P)$};
\node[decomposition left factor] (triple) at (-3.7,-2.91)
  {$\Phi_1+\Phi_2$\\auxiliary triple\\for one monomial};
\draw[decomposition link] (base.south) -- (-3.7,-1.97) -| (t.north);
\draw[decomposition link] (-3.7,-1.97) -| (input.north);
\draw[decomposition link] (-3.7,-1.97) -- (triple.north);

\node[decomposition factor,text width=2.85cm] (connector) at (1.95,-2.91)
  {$R_j$\\$E_{j,t}\in\mathcal M_j$\\9 auxiliary colors};
\node[decomposition factor,text width=2.85cm] (image) at (5.45,-2.91)
  {$D_j(\bz)$\\triple from $P$\\mapped to $\tau_j$ by $\theta$};
\draw[decomposition link] (add.south) -- (3.7,-1.97) -| (connector.north);
\draw[decomposition link] (3.7,-1.97) -| (image.north);
\end{tikzpicture}
\par\vspace{.90cm}
\begin{tikzpicture}[x=1cm,y=1.1cm,line cap=round,line join=round,font=\small,
  every node/.style={inner sep=1pt},
  palettegroup/.style={draw=black!30,fill=black!3,rounded corners=4pt}]
% The circle records the auxiliary cyclic order; only a selected triple is shaded.
\def\paletteauxedge#1#2{%
  \foreach \v in {0,...,7}
    \coordinate (v\v) at ({90-45*\v}:\auxradius);
  \coordinate (vc) at ({-.12*\auxradius},{-.08*\auxradius});
  \coordinate (vd) at ({.43*\auxradius},{-.08*\auxradius});
  \draw[black!30,line width=.4pt] (0,0) circle[radius=\auxradius];
  \foreach \angle in {-67.5,112.5}
    \node[fill=white,inner sep=.2pt,text=black!50,rotate=22.5]
      at (\angle:\auxradius) {$\cdots$};
  \foreach \v in {0,...,7}
    \fill[black!30] (v\v) circle[radius=.7pt];
  \ifnum#1=1
    \def\edgecolor{blue!65!black}\def\edgefill{blue!10}
    \def\first{v0}\def\second{v1}\def\third{v5}
    \def\firstlabel{a_i}\def\secondlabel{b_i}\def\thirdlabel{b_k}
    \def\firstangle{90}\def\secondangle{45}\def\thirdangle{-135}
    \def\firstanchor{south}\def\secondanchor{south west}\def\thirdanchor{north east}
  \else
    \def\edgecolor{blue!65!black}\def\edgefill{blue!10}
    \def\first{v1}\def\second{v2}\def\third{v4}
    \def\firstlabel{b_i}\def\secondlabel{a_{i+1}}\def\thirdlabel{a_k}
    \def\firstangle{45}\def\secondangle{0}\def\thirdangle{-90}
    \def\firstanchor{south west}\def\secondanchor{west}\def\thirdanchor{north}
  \fi
  \ifnum#2>2 \def\second{vc}\fi
  \ifnum#2=2 \def\third{vd}\fi
  \ifnum#2=4 \def\third{vd}\fi
  \draw[draw=\edgecolor,fill=\edgefill,line width=.8pt]
    (\first)--(\second)--(\third)--cycle;
  \foreach \v in {\first,\second,\third}
    \fill[\edgecolor] (\v) circle[radius=1.5pt];
  \node[anchor=\firstanchor] at (\firstangle:\labelradius) {$\firstlabel$};
  \ifnum#2>2
    \def\canchor{north east}\def\cshift{0}
    \ifnum#1=1\ifnum#2=3
      \def\canchor{north west}\def\cshift{.08}
    \fi\fi
    \node[anchor=\canchor] at ({-.12*\auxradius+\cshift},{-.25*\auxradius}) {$c$};
  \else
    \node[anchor=\secondanchor] at (\secondangle:\labelradius) {$\secondlabel$};
  \fi
  \ifnum#2=1 \node[anchor=\thirdanchor] at (\thirdangle:\labelradius) {$\thirdlabel$};\fi
  \ifnum#2=3 \node[anchor=\thirdanchor] at (\thirdangle:\labelradius) {$\thirdlabel$};\fi
  \ifnum#2=2 \node[anchor=north west] at ({.43*\auxradius},{-.25*\auxradius}) {$d$};\fi
  \ifnum#2=4 \node[anchor=north west] at ({.43*\auxradius},{-.25*\auxradius}) {$d$};\fi
}

% Comparable tuple panels follow the left-to-right order of the decomposition tree.
\foreach \shift in {0,7.88} {
  \begin{scope}[shift={(\shift,0)}]
  \foreach \left/\right/\center/\heading in {
    .10/1.68/.89/{$t$-colors},2.28/5.39/3.835/{Auxiliary colors},6.03/7.32/6.675/{$C(P)$}} {
    \draw[palettegroup] (\left,5.12) rectangle (\right,7.26);
    \node[anchor=south] at (\center,7.43) {\heading};
  }
  \end{scope}
}
% Eight distinct t-colors, one auxiliary triple, and one input color.
\foreach \row in {0,...,3} {
  \foreach \col in {0,1} {
    \pgfmathtruncatemacro{\idx}{1+\row+4*\col}
    \fill[blue!65!black] ({.27+.77*\col},{6.74-.43*\row}) circle[radius=1.65pt];
    \node[anchor=west,font=\scriptsize] at ({.38+.77*\col},{6.74-.43*\row}) {$t_{\idx}$};
  }
}
\begin{scope}[xshift=3.95cm]
\foreach \xx/\yy in {2.31/6.88,3.12/6.76,2.36/5.44,3.08/5.55}
  \fill[black!30] (\xx,\yy) circle[radius=.8pt];
\fill[blue!65!black] (2.725,6.15) circle[radius=1.65pt];
\node[anchor=north] at (2.725,6.00) {$u$};
\end{scope}
\begin{scope}[shift={(3.835,6.12)},font=\scriptsize]
  \def\auxradius{.76}\def\labelradius{.88}
  \paletteauxedge{1}{2}
\end{scope}

\begin{scope}[shift={(-7.88,-3.23)}]
% The four monomial types in each auxiliary cubic polynomial.
\foreach \xx/\kind in {8.61/1,10.52/2,12.43/3,14.34/4} {
  \foreach \yy/\row in {6.62/1,4.90/2} {
    \begin{scope}[shift={(\xx,\yy)},font=\scriptsize]
      \def\auxradius{.49}\def\labelradius{.60}
      \paletteauxedge{\row}{\kind}
    \end{scope}
  }
}

\end{scope}

\begin{scope}[shift={(7.88,4.28)}]
\node[text=black!45] at (.89,1.85) {---};
% The three marked positions are ordered entries and may repeat a color.
\begin{scope}[xshift=3.95cm]
\draw[draw=orange!75!black,fill=orange!5,rounded corners=4pt,line width=.7pt]
  (2.27,1.30) rectangle (3.16,2.72);
\foreach \yy/\idx in {2.47/1,2.01/2,1.55/3} {
  \fill[orange!80!black] (2.50,\yy) circle[radius=1.65pt];
  \node[anchor=west,font=\scriptsize] at (2.65,\yy) {$u_{\idx}$};
}
\end{scope}
\begin{scope}[xshift=-1.73cm]
\draw[draw=orange!75!black,fill=orange!5,rounded corners=5pt,line width=.7pt]
  (4.26,1.19) rectangle (6.87,2.60);
\foreach \xx in {4.69,5.565,6.44}
  \foreach \yy in {1.47,1.89,2.31}
    \fill[orange!80!black] (\xx,\yy) circle[radius=1.65pt];
\node[font=\scriptsize,anchor=north] at (5.565,.68) {$E_{j,t}\in\mathcal M_j$};

\end{scope}

\end{scope}

% Connector structure below the additional tuple, without shrinking the labels.
\begin{scope}[shift={(7.88,1.05)},font=\scriptsize,
  connector A/.style={circle,draw=blue!45!black,fill=blue!45!black,inner sep=0pt,minimum size=3.7pt},
  connector B/.style={rectangle,draw=black!60,fill=black!60,inner sep=0pt,minimum size=3.7pt},
  connector omitted/.style={circle,draw=black!35,fill=white,inner sep=0pt,minimum size=3.7pt},
  connector accent/.style={draw=orange!65!black,line width=.55pt},
  connector brace/.style={decorate,decoration={brace,amplitude=3pt,mirror},draw=black!50,line width=.35pt}]
\node[anchor=east] at (.34,3.02) {$E_{0,t}$};
\node[anchor=east] at (.34,1.43) {$E_{j,t}$};
\node[anchor=east,text=black!60] at (.34,1.08) {$j>0$};
\foreach \yy in {3.02,1.43} {
  \draw[black!20,line width=.35pt] (.61,\yy)--(5.72,\yy);
  \foreach \i in {0,...,8} {
    \pgfmathsetmacro{\xx}{.80+.59*\i}
    \ifdim\yy pt<2pt
      \ifnum\i=4
        \node[connector omitted] at (\xx,\yy) {};
      \else
        \ifodd\i\node[connector B] at (\xx,\yy) {};
        \else\node[connector A] at (\xx,\yy) {};\fi
      \fi
    \else
      \ifodd\i\node[connector B] at (\xx,\yy) {};
      \else\node[connector A] at (\xx,\yy) {};\fi
    \fi
  }
}
\node[text=black!45] at (6.04,1.43) {$\cdots$};
\draw[black!20,line width=.35pt] (6.40,1.43)--(7.12,1.43);
\node[connector A] at (6.85,1.43) {};
\node[anchor=north] at (6.85,1.26) {$A_{t+2+9j}$};
\draw[connector accent] (3.16,3.02) circle[radius=4pt];
\draw[connector accent] (6.85,1.43) circle[radius=4pt];
\draw[connector accent,-{Stealth[length=1.3mm]},rounded corners=5pt]
  (3.16,2.85)--(3.16,2.22)--(6.85,2.22)--(6.85,1.62);
\node[anchor=south,text=orange!65!black] at (5.0,2.28) {$18j$ positions later};
\node[text=black!45] at (3.16,1.06) {omitted};
\draw[connector brace] (.63,.78)--(2.74,.78);
\draw[connector brace] (3.61,.78)--(7.08,.78);
\node[anchor=north,text=black!65] at (1.685,.59) {$2$ $A$-colors, $2$ $B$-colors};
\node[anchor=north,text=black!65] at (5.345,.59) {$3$ $A$-colors, $2$ $B$-colors};
\end{scope}
\end{tikzpicture}
\caption{Construction of $\mathcal H_{n,P}$. The lower-left triples represent monomials of $\Phi_1$ (top) and $\Phi_2$ (bottom). At lower right, $A_{t+2}$ is replaced by $A_{t+2+9j}$; circles and squares denote $A$- and $B$-colors. The hollow marker denotes a color omitted from the connector, and thin lines indicate cyclic order. The triple from $P$ may repeat a color.}
\label{fig:twelve-palette}
\end{figure}

A sequence of finite palettes with homomorphisms $P_{i+1}\to P_i$ is a \emph{projective sequence}. Its Lagrangians are nonincreasing. Li, Liu, and Pikhurko proved the following realization theorem.
\begin{theorem}[{\cite[Proposition 3.1 and Section 6]{llp}}]\label{thm:projective}
If finite $r$-palettes satisfy $P_{i+1}\to P_i$ and $\lim_{i\to\infty}\lambda(P_i)=x$, then $x\in\Pu r$.
\end{theorem}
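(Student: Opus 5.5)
The plan is to realize $x$ with the single forbidden family
\[
\Fu\coloneqq\Bigl\{F\colon F\notin\textstyle\bigcap_{i\ge1}\Col(P_i)\Bigr\}
\]
and to prove $\pi_{r-2}(\Fu)=x$ by a matching upper and lower bound. Here $F$ ranges over finite $r$-graphs, and $\Fu$ is nonempty unless the intersection contains every $r$-graph, which is a trivial case.

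\textbf{Upper bound.} Let $Q$ be a finite $\Fu$-deficient palette. Then $\Col(Q)\subseteq\Col(P_i)$ for every $i$. By \cref{thm:palette-tools}(ii), this gives $Q\to P_i$ or $Q\to\rev(P_i)$ for every $i$. In either case $\lambda(Q)\le\lambda(P_i)$, since homomorphisms and reversal do not increase the Lagrangian. Letting $i\to\infty$ gives $\lambda(Q)\le x$, and \cref{thm:palette-tools}(i) then yields $\pi_{r-2}(\Fu)\le x$.

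\textbf{Step 1: a consistent limit weighting.} For the lower bound I would build a single random construction colored by the inverse limit $\mathcal C\coloneqq\varprojlim C(P_i)$. This is the compact metrizable space of threads $(c_1,c_2,\ldots)$ with $\theta_i(c_{i+1})=c_i$. For each $j$, take an optimal weighting $\bx^{(j)}$ of $P_j$ and push it forward along the homomorphisms to weightings $\bx^{(j)}_i$ on $C(P_i)$ for $i\le j$. By the pushforward inequality of \cref{sec:palettes}, $\Lambda_{P_i}(\bx^{(j)}_i)\ge\lambda(P_j)\ge x$. Each $C(P_i)$ is finite, so a diagonal argument gives, along a subsequence, limits $\bx_i$ that are consistent under pushforward and satisfy $\Lambda_{P_i}(\bx_i)\ge x$. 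Kolmogorov extension then gives a Borel probability measure $\mu$ on $\mathcal C$ with marginals $\bx_i$.

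\textbf{Step 2: the random construction and $\Fu$-freeness.} On $[n]$ with its natural order, color every $(r-1)$-set by an independent sample from $\mu$. Let $H$ consist of those $r$-sets whose face-color tuple, listed in the order of \cref{sec:palettes}, projects into $P_i$ for every $i$. Any subgraph $F\subseteq H$ inherits, at each level $i$, a $P_i$-coloring from the projection to $C(P_i)$. Hence $F\in\bigcap_i\Col(P_i)$, so $H$ is $\Fu$-free.

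\textbf{Step 3: denseness, the main obstacle.} The delicate point is that $H$ must be $(x,\eta,r-2)$-dense uniformly over all $(r-2)$-graphs $G$, while $\mu$ may have infinite support. I would handle this by comparison with a finite level. Let $H_i\supseteq H$ consist of the edges admissible at level $i$. Then $H_i$ is exactly the standard finite-palette construction for $P_i$ with weights $\bx_i$. The standard argument behind \cref{thm:palette-tools}(i) shows that, with high probability, $H_i$ is $(\Lambda_{P_i}(\bx_i),\eta/2,r-2)$-dense, hence $(x,\eta/2,r-2)$-dense.

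It then suffices to control the loss. The expected density of $H_i\setminus H$ among $r$-sets is $\Lambda_{P_i}(\bx_i)-\Prb[\text{admissible at all levels}]$. Admissibility at all levels is a decreasing intersection of events, each involving only finitely many coordinates, so this difference tends to $0$ as $i\to\infty$. I need this to hold for the limit weighting; this should follow from $\lambda(P_i)\to x$ together with the consistency of the $\bx_i$.

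Choosing $i$ large therefore gives $\E|H_i\setminus H|\le\delta n^r$ with $\delta\ll\eta$. Markov's inequality bounds $|H_i\setminus H|$ by a small multiple of $n^r$ with probability bounded below. The loss $|H_i\setminus H|$ bounds the defect $|(H_i\setminus H)\cap\mathcal K_r(G)|$ for every $G$ simultaneously, so it is absorbed into the $\eta n^r$ error. Hence $H$ is $(x,\eta,r-2)$-dense for large $n$, which gives $\pi_{r-2}(\Fu)\ge x$ and therefore $x\in\Pu r$.
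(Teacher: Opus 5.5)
Your proof is correct, but it takes a different route from the paper. The paper does not prove this statement itself. It cites Li, Liu, and Pikhurko's Proposition~3.1, which is stated only for strictly decreasing Lagrangians, and reduces to that case. If the Lagrangians are eventually constant, the limit lies in $\Du_r\subseteq\Pu r$ by \cref{lem:finite-values}. Otherwise one passes to a strictly decreasing subsequence and composes the intermediate homomorphisms.

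You instead give a self-contained argument that uses only \cref{thm:palette-tools}:
\begin{enumerate}
\item The realizing family is explicit: all $r$-graphs outside $\bigcap_i\Col(P_i)$.
\item The upper bound comes from the separation theorem.
\item The lower bound comes from a random construction colored by the inverse limit $\varprojlim C(P_i)$, using a consistent limit measure $\mu$.
\end{enumerate}
Your route needs neither strict monotonicity nor the case split. The paper's route is shorter but depends entirely on the external result.

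The point you flag in Step~3 is automatic. The difference $\Lambda_{P_i}(\bx_i)-\Prb[\text{admissible at all levels}]$ tends to $0$ by continuity from above of $\mu^{\otimes r}$, with no further input. Moreover, the sandwich $x\le\Lambda_{P_i}(\bx_i)\le\lambda(P_i)\to x$ shows that every $r$-set lies in $H$ with probability exactly $x$.

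Consequently, the comparison with $H_i$ is optional. The bounded-differences inequality applies to $H$ directly, since each face color affects at most $n$ edges. This gives failure probability $\exp(-\Omega(\eta^2n^{r-1}))$ for each $(r-2)$-graph $G$, which beats the $2^{\binom n{r-2}}$ choices of $G$.

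There is an even shorter lower bound. Sample $N$ threads from $\mu$ and take the induced palette on them. It maps to every $P_i$, so it is $\Fu$-deficient. Its value at the uniform weighting has expectation at least $(1-\binom r2/N)x$. Then \cref{thm:palette-tools}(i) gives $\pi_{r-2}(\Fu)\ge x$ with no hypergraph construction at all.
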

The cited result is stated for strictly decreasing Lagrangians. If the Lagrangians are eventually constant, the conclusion follows from \cref{lem:finite-values}; otherwise, extract a strictly decreasing subsequence and compose the intermediate homomorphisms.

The same paper supplies projective sequences whose limiting Lagrangians fill an interval and whose maps to a fixed palette are compatible with the transition maps.
\begin{proposition}[{\cite[Theorem 5.3 and Section 6]{llp}}]\label{prop:input}
There are a finite $3$-palette $P_*$ without monochromatic tuples and constants $0<a<b<1$ such that every $x\in[a,b]$ is realized by a projective sequence $P_*\longleftarrow P_1\longleftarrow P_2\longleftarrow\cdots$ with $\lim_{i\to\infty}\lambda(P_i)=x$. There is also a $\rho\in(0,1)$ such that every point of $[\rho,1]$ has a projective realization by finite $3$-palettes.
\end{proposition}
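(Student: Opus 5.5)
This is a citation of Li, Liu, and Pikhurko, so the plan is to extract both assertions from their construction rather than to reprove their analysis. The point needing care is that the statement asks for slightly more than the published headline results: a \emph{fixed} top palette $P_*$, maps $P_1\to P_*$ compatible with the transition maps, and the absence of monochromatic tuples in $P_*$. I would proceed in three steps.

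First, I would recall the shape of the construction behind \cite[Theorem 5.3]{llp}. There one fixes a finite $3$-palette that serves as a template. Each point $x$ of an interval is then encoded by a sequence of choices, for instance digits or rational approximations. The $i$th palette $P_i$ is obtained from $P_{i-1}$ by substituting a finite gadget into designated colors, with parameters depending on the first $i$ choices. Collapsing each newly created gadget back onto the color it replaced gives a homomorphism $P_i\to P_{i-1}$. Composing these maps down to stage zero gives the compatible maps into the template.

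I would take $P_*$ to be that stage-zero template. I would set $[a,b]$ to be the interval of limits obtained in \cite[Theorem 5.3]{llp}, where $\lambda(P_i)$ converges to $x$ by their Lagrangian computation. Then I would verify by direct inspection of the listed admissible tuples that $P_*$ contains no tuple $(c,c,c)$.

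The only possible obstacle is this inspection, together with checking that the projective maps really land in one fixed $P_*$ independent of $x$ rather than in a palette depending on $x$. If the original template happens to contain a monochromatic tuple, I would replace it as follows. Take the palette $P_*'$ on the same colors with the monochromatic tuples deleted, and check that every $P_i$ already maps into $P_*'$. This holds provided the gadgets are substituted only into colors that never occur in monochromatic tuples; I expect this from the design, since such tuples would give Lagrangian contributions not accounted for in their optimization.

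Second, for the statement about $[\rho,1]$, I would invoke the proof of \cite[Theorem 1.1]{llp} as carried out in their Section~6. There every value in a terminal interval is produced by an explicit projective sequence of finite $3$-palettes, and \cref{thm:projective} turns such sequences into uniform densities. This part needs no extra compatibility requirement, only the existence of the projective sequences, so it can be quoted directly.

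Finally, I would note the degenerate cases. If some realizing sequence has eventually constant Lagrangians, it is still projective, and the statement requires nothing more. Non-strict monotonicity is harmless because the Lagrangians along any projective sequence are nonincreasing, as recorded before \cref{thm:projective}.
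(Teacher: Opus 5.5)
Your plan matches the paper, which likewise only quotes \cite[Theorem 5.3 and Section 6]{llp} and takes the maps into $P_*$ to be the compositions along each sequence. The paper adds one remark that you omit: the face order in \cite{llp} is the reverse of the one in \cref{sec:palettes}. This is harmless, because reversing every palette preserves Lagrangians, homomorphisms, and the absence of monochromatic tuples.

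Your fallback for monochromatic tuples is not needed, since the statement records that the palette from \cite{llp} has none. As written it would also fail: under your own description of the construction, a color $c$ with $(c,c,c)\in P_*$ that never receives a gadget keeps that tuple in every $P_i$, so the composed maps would not land in $P_*'$. The condition you would actually need is that no tuple of any $P_i$ maps onto a monochromatic tuple.
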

The face order in~\cite{llp} is the reverse of the order fixed in \cref{sec:palettes}. Reversing all its palettes gives the stated convention. For each projective sequence used below that starts at $P_*$, take its maps to $P_*$ to be the compositions along that sequence; they therefore commute with the transition maps.

Fix the palette $P_*$ and the interval $[a,b]$ from \cref{prop:input}. Enumerate the admissible triples of $P_*$ as $\tau_0,\ldots,\tau_{q-1}$. For a finite $3$-palette $P$ with a homomorphism $\theta\colon P\to P_*$, define
\[
D_j(\bz)\coloneqq
\sum_{\substack{(u,v,w)\in P\\(\theta(u),\theta(v),\theta(w))=\tau_j}}
z_uz_vz_w
\]
for $0\le j<q$, where $\bz\coloneqq(z_c)_{c\in C(P)}$. Each admissible triple is counted separately, even when different triples give the same monomial. Thus $\Lambda_P(\bz)=\sum_{j=0}^{q-1}D_j(\bz)$. For a nonnegative weighting $\bz$ of $P$ with total weight $T$, we have
\begin{equation}\label{eq:piece-bounds}
0\le D_j(\bz)\le\frac4{27}T^3,
\quad\text{and}\quad
\sum_{j=0}^{q-1}D_j(\bz)\le T^3.
\end{equation}
For the first bound, write $\tau_j=(d_1,d_2,d_3)$ and sum the weights of colors with the same image under $\theta$. This gives
\[
D_j(\bz)\le\prod_{\ell=1}^3\left(\sum_{\substack{c\in C(P)\\\theta(c)=d_\ell}}z_c\right).
\]
The color multiplicities of $\tau_j$ are either $1+1+1$ or $2+1$, since $P_*$ has no monochromatic triple. For these two patterns, the product is at most $T^3/27$ and $4T^3/27$, respectively, by AM--GM, proving the first bound. The second bound follows because the admissible triples are a subset of all ordered triples.

Fix an integer $n\in18\N$ with $n\ge36(q+1)$.

Take auxiliary colors $A_i,B_i$ for $i\in\mathbb Z/n\mathbb Z$, and 2 further colors with weights $c,d$. Read their subscripts modulo $n$. Write $a_i,b_i$ for the corresponding weights, and put $A\coloneqq\sum_{i=0}^{n-1} a_i$ and $B\coloneqq\sum_{i=0}^{n-1} b_i$. We use the following edge polynomials of simple $3$-graphs from~\cite[Section 2.3]{ordinary}.
\[
\begin{aligned}
\Phi_1&\coloneqq\sum_{i=0}^{n-1} a_i(b_i+c)(B-b_i+d)=\sum_{\substack{i,j\in\mathbb Z/n\mathbb Z\\j\ne i}}a_i b_i b_j
+\sum_{\substack{i,j\in\mathbb Z/n\mathbb Z\\j\ne i}}a_i b_j c
+\sum_{i=0}^{n-1}a_i b_i d
+\sum_{i=0}^{n-1}a_i c d,\\
\Phi_2&\coloneqq\sum_{i=0}^{n-1} b_i(a_{i+1}+c)(A-a_{i+1}+d)=\sum_{\substack{i,j\in\mathbb Z/n\mathbb Z\\j\ne i+1}}b_i a_{i+1}a_j
+\sum_{\substack{i,j\in\mathbb Z/n\mathbb Z\\j\ne i+1}}b_i a_j c
+\sum_{i=0}^{n-1}b_i a_{i+1}d
+\sum_{i=0}^{n-1}b_i c d.
\end{aligned}
\]
The 4 sums in each expansion give the 4 types of auxiliary edges. In the factored expressions, the distinguished variable and the 2 parenthesized factors involve pairwise disjoint sets of colors. The distinguished $a_i$ or $b_i$ determines the summand, so each $\Phi_i$ has coefficients 0 or 1.

To define the connecting families, set $v_{2i}\coloneqq A_i$ and $v_{2i+1}\coloneqq B_i$, with subscripts read modulo $2n$. For $0\le j<q$, let $\mathcal M_j$ consist of the $n$ sets
\begin{equation}\label{eq:connector-edges}
E_{j,t}\coloneqq
\{v_{2t+\ell}\colon\ell\in\{0,1,2,3,5,6,7,8\}\}
\cup\{v_{2t+4+18j}\},
\qquad t\in\mathbb Z/n\mathbb Z.
\end{equation}
The edge polynomial of $\mathcal M_j$ is
\[
R_j\coloneqq\sum_{t=0}^{n-1}
a_ta_{t+1}a_{t+2+9j}a_{t+3}a_{t+4}
b_tb_{t+1}b_{t+2}b_{t+3},
\]
where all subscripts are read modulo $n$. Each monomial corresponds to the edge $E_{j,t}$. Each edge is obtained from 9 consecutive auxiliary colors by moving their middle $A$-color through $18j$ positions; see the lower-right diagram in \cref{fig:twelve-palette}.

Take 8 further colors $t_1,\ldots,t_8$, disjoint from all auxiliary colors and the colors of $P$, using the same symbols for their weights. We construct a $12$-palette $\mathcal H_{n,P}$ with polynomial $\Lambda_{\mathcal H_{n,P}}=\Lambda_{\rm base}+\Lambda_{\rm add}$, where
\[
\Lambda_{\rm base}\coloneqq2T(\Phi_1+\Phi_2)\prod_{\ell=1}^8t_\ell,
\quad\text{and}\quad
\Lambda_{\rm add}\coloneqq\sum_{j=0}^{q-1}R_jD_j(\bz).
\]
Here $T\coloneqq\sum_{c\in C(P)}z_c$ is the total weight of the colors of $P$. The construction depends on the chosen homomorphism $\theta$, which we suppress from the notation.

The admissible tuples are specified as follows.
\begin{enumerate}[beginpenalty=10000]
\item For the base part, put 8 $t$-colors first, the 3 auxiliary colors next, and 1 color of $P$ last. Choose 4 distinct permutations of the $t$-colors, using 2 for each $\Phi_i$; fix an arbitrary ordering of every auxiliary edge. For each fixed last color from $C(P)$, an auxiliary edge whose monomial occurs in both $\Phi_1$ and $\Phi_2$ gives 4 different tuples.
\item For the additional part, take each connector $E_{j,t}\in\mathcal M_j$, fix an ordering of its colors, and append each admissible triple of $P$ whose image under $\theta$ is $\tau_j$. We verify below that the connecting families are simple and pairwise edge-disjoint, so each connecting edge identifies its index $j$. These tuples contain no $t$-colors and therefore cannot coincide with base tuples.
\end{enumerate}
For each $n$, choose the 4 permutations of the $t$-colors and the orders of the auxiliary edges and connectors once, and use them for every input palette and every homomorphism to $P_*$. Thus the prescribed tuples are distinct, including when an admissible triple of $P$ repeats a color.

Let $\phi\colon P'\to P$ be a homomorphism, and let $\theta'\colon P'\to P_*$ and $\theta\colon P\to P_*$ satisfy $\theta'=\theta\circ\phi$. Extending $\phi$ by fixing all auxiliary and $t$-colors gives a homomorphism $\mathcal H_{n,P'}\to\mathcal H_{n,P}$. The identity $\theta'=\theta\circ\phi$ preserves the partition of the admissible triples according to their images in $P_*$ and hence all admissible tuples of the constructed palettes.

We optimize over probability weightings of $\mathcal H_{n,P}$. The next proposition gives the exact dependence of $\lambda(\mathcal H_{n,P})$ on $\lambda(P)$.

\begin{samepage}
\begin{proposition}\label{prop:exact-reader}
For every sufficiently large $n\in18\N$ with $n\ge36(q+1)$, there is a continuous function $\Psi_n\colon[0,\infty)\to[0,\infty)$, strictly increasing on $(0,\infty)$, such that the following hold for every finite $3$-palette $P$ with $P\ne\varnothing$ equipped with a homomorphism $\theta\colon P\to P_*$.
\begin{enumerate}
\item Every optimal weighting of $\mathcal H_{n,P}$ is constant within each auxiliary class, that is, $a_i=A/n$ and $b_i=B/n$ for all $i$.
\item We have $\lambda(\mathcal H_{n,P})=\Psi_n(\lambda(P))$; in particular, this value is independent of the choice of $\theta$.
\item We have $\lambda(\mathcal H_{n,P})=12^{-12}(4+\lambda(P)n^{-8})+O(n^{-12})$, where the error is nonnegative and the implicit constant is uniform over these choices of $P$ and $\theta$.
\end{enumerate}
\end{proposition}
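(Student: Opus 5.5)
The plan is to reduce the optimization to finitely many aggregate variables by proving (i) first. After that, (ii) and (iii) follow from a one-parameter envelope argument.

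\textbf{Symmetry.} The whole polynomial $\Lambda_{\mathcal H_{n,P}}$ is invariant under the cyclic shift $a_i\mapsto a_{i+1}$, $b_i\mapsto b_{i+1}$. This holds because $\Phi_1$, $\Phi_2$ and every $R_j$ are sums over $i$ or $t\in\mathbb Z/n\mathbb Z$ of shift-compatible monomials. The shift fixes the $t$-colors, $c$, $d$ and $\bz$.

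\textbf{Step 1: the base problem.} Fix the class totals $A,B$, the weights $c,d$, the total $T$ and the $t_\ell$. From~\cite[Section 2.3]{ordinary}, recall the maximization of $\Phi_1+\Phi_2$. After AM--GM over the $t$-colors and $T$, the maximum of $\Lambda_{\rm base}$ is $4\cdot12^{-12}$. The maximizers form a nondegenerate family in which $A,B,c,d$ are pinned to specific values, and the uniform choice $a_i=A/n$, $b_i=B/n$ is among them.

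\textbf{Step 2: near-uniformity at an optimum.} The additional part satisfies $\Lambda_{\rm add}\le \sum_j R_j T^3$ by~\eqref{eq:piece-bounds}. Each $R_j$ is a sum of $n$ monomials of degree $9$ in auxiliary weights, so it is $O(n^{-8})$ unless the auxiliary weights concentrate. Concentration would cost a constant in $\Lambda_{\rm base}$. A stability version of the base maximization then shows two things at any optimum of $\mathcal H_{n,P}$. First, $\Lambda_{\rm base}\ge 4\cdot 12^{-12}-O(n^{-8})$. Second, the aggregate variables, and then each $a_i$ and $b_i$, are within $o(1/n)$ (relatively) of the uniform values.

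\textbf{Step 3: exact uniformity (the main obstacle).} I expect this step to be the main obstacle. Write $a_i=(A/n)(1+\alpha_i)$ and $b_i=(B/n)(1+\beta_i)$ with $\sum\alpha_i=\sum\beta_i=0$. Expand $\Lambda_{\rm base}$ to second order in $(\alpha,\beta)$. I would aim to show, by a Fourier decomposition over $\mathbb Z/n\mathbb Z$, that the quadratic form is negative definite on the zero-sum subspace with eigenvalues of order $n^{-2}$ times the base scale.

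The terms $a_ib_i$, $b_ia_{i+1}$ and the ``$-b_i$'' and ``$-a_{i+1}$'' corrections in $\Phi_1,\Phi_2$ give the nontrivial Fourier coefficients. I would check that every nonzero frequency is strictly penalized; the shift $18j$ and the choice $n\in18\N$ should ensure no accidental cancellation. By contrast, the Hessian of $\Lambda_{\rm add}$ in these coordinates is smaller by a factor $n^{-7}$ or so. The total is therefore strictly concave on the deviation subspace in the neighbourhood from Step 2.

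Hence, for fixed aggregates, the optimum is unique there. By cyclic invariance, a unique point must be shift-fixed, that is, uniform. If the spectral gap fails, I would instead try a direct majorization: average the weighting over all $n$ shifts. This leaves every $R_j$ comparable and increases $\Lambda_{\rm base}$ strictly unless the weighting is already uniform.

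\textbf{Step 4: proof of (ii).} With uniform weights, every $R_j$ equals $n\cdot(A/n)^5(B/n)^4=n^{-8}A^5B^4$, independently of $j$. Hence
\[
\Lambda_{\rm add}=n^{-8}A^5B^4\Lambda_P(\bz)\le n^{-8}A^5B^4T^3\lambda(P),
\]
with equality for a suitable $\bz$. Therefore
\[
\lambda(\mathcal H_{n,P})=\Psi_n(\lambda(P)),
\]
where $\Psi_n(\lambda)$ is the maximum of
\[
2T\,\phi(A,B,c,d)\prod_\ell t_\ell+\lambda n^{-8}A^5B^4T^3
\]
over finitely many aggregate variables with total weight $1$. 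Here $\phi$ denotes $\Phi_1+\Phi_2$ at uniform weights. The value is visibly independent of $\theta$. Continuity follows from compactness. The function is strictly increasing on $(0,\infty)$ because $A,B,T>0$ at any optimum.

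\textbf{Step 5: proof of (iii).} $\Psi_n$ is a maximum of affine functions of $\lambda$, so it is convex. Evaluating at the base optimizer $x_0$ gives
\[
\Psi_n(\lambda)\ge 4\cdot12^{-12}+\lambda n^{-8}A_0^5B_0^4T_0^3.
\]
I would check that $A_0^5B_0^4T_0^3=12^{-12}$, using the explicit base optimum. For the upper bound, optimizing a perturbation of size $n^{-8}$ against the nondegenerate base maximum moves the optimum by $O(n^{-8})$ in parameter space. This changes the value by $O(n^{-16})$, well within the claimed $O(n^{-12})$. The bound is uniform in $P$ since $\lambda(P)\le1$.
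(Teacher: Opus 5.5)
\textbf{Gap in Steps 1--2.} Step 2 rests on a false premise. Near-optimality of the base part does not force the auxiliary weights to be near-uniform, and it is false that ``concentration would cost a constant in $\Lambda_{\rm base}$''. Take $n\ge3$, $a_0=b_1=c=S/3$, $d=0$, and all other $a_i,b_i$ zero. Then
\[
\Phi_1=a_0c\,B=(S/3)^3
\quad\text{and}\quad
\Phi_2=b_1c\,A=(S/3)^3,
\]
so $\Delta=0$ and $\Lambda_{\rm base}$ attains $4\cdot12^{-12}$. More generally, every configuration with $A=B=c=S/3$, $d=0$, and $a_ib_i=b_ia_{i+1}=0$ for all $i$ is a maximizer; these are exactly the maximizers with $\rho_A=\rho_B=0$. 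So the base maximizers are not a single family with $A,B,c,d$ pinned. The uniform maximizer has $c=(S/3)(1-1/n)$ and $d=S/(3n)$, while this second family is highly concentrated. No stability statement about the base part alone can give your Step 2. You must show that near this degenerate family the connectors are too weak to compensate for the deficit.

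\textbf{What is missing, and what survives.} The missing ingredient is quantitative, and it is the heart of the paper's proof. Put $h=\max\{0,\rho_A,\rho_B\}$ and split into two regimes.
\begin{enumerate}
\item If $\Delta<h^3/28$, a cyclic-sequence argument places every $a_i,b_i$ in $(S/(27n),9S/(4n)]$.
\item If $\Delta\ge h^3/28$, then $\sum_t a_tb_ta_{t+1}b_{t+1}\le\frac{1675}{16}A\Delta$. Since every connector contains $X_t$, this gives $R_j\le\frac{1675}{1728}A^4B^2\Delta$.
\end{enumerate}
In the second regime you compare this with the retained loss $2T\Delta\prod_\ell t_\ell$. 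For the comparison to go the right way, $A,B,T,t_\ell$ must be pinned near $1/12$. The paper gets this from the Euler identities $t_\ell=(1-\eta)/12$, $S=(1+2\eta)/4$, $T=(1+2\eta)/12$ together with $\eta<10^{-5}$. The degenerate regime then gives $\lambda(\mathcal H_{n,P})\le4\cdot12^{-12}$, contradicting the test weighting. Only after this do you obtain $R_j=O(n^{-8})$ and $\Delta=O(n^{-8})$. From $E\le\Delta$ you then get $|b_i-\rho_B|,|a_{i+1}-\rho_A|=O(n^{-7/2})$, which is the near-uniform window you assumed.

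Given that window, your Step 3 is sound and needs no Fourier analysis; the shift $18j$ only enters the connectors. On a slice with fixed class totals, the base Hessian is $2T\prod_\ell t_\ell\bigl(-2\sum a_i\zeta_i^2-2\sum b_i\xi_{i+1}^2\bigr)$ plus cross terms with coefficients $4(\rho_B-b_i)$ and $4(\rho_A-a_{i+1})$. It is therefore at most $-cV/n$, while the additional part has Hessian $O(n^{-7})$. Strict concavity along the segment from an optimum to its cyclic shift, which is also optimal, then forces uniformity. This symmetry argument is a legitimate alternative to the paper's balancing comparison, which instead uses the degree regularity of $\mathcal M_j$ to cancel the first-order term. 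Steps 4 and 5 are fine once (i) is established.
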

\end{samepage}

The proof first estimates $\Phi_1+\Phi_2$ and the contributions from the connecting families. Within the resulting range of weights, balancing the auxiliary classes increases the base part by at least a constant multiple of $n^{-1}$ times the sum of squared deviations from the class averages, whereas the possible loss in the additional part is at most $O(n^{-7})$ times the same sum. Thus every optimal weighting is constant within each auxiliary class, and the maximum depends on $P$ only through $\lambda(P)$.

\begin{proof}[Proof of \cref{prop:exact-reader}]

Write $S\coloneqq A+B+c+d$ for the total auxiliary weight. Put $\rho_A\coloneqq(A+d-c)/2$ and $\rho_B\coloneqq(B+d-c)/2$, and define
\[
E\coloneqq\sum_{i=0}^{n-1} a_i(b_i-\rho_B)^2+\sum_{i=0}^{n-1} b_i(a_{i+1}-\rho_A)^2,
\quad\text{and}\quad
\Delta\coloneqq\frac{2S^3}{27}-\Phi_1-\Phi_2.
\]
Here $\Delta$ is the \emph{deficit} of $\Phi_1+\Phi_2$ from its upper bound $2S^3/27$.
Completing the square in each summand of $\Phi_1$ and $\Phi_2$ gives
\[
\Phi_1=\frac{A(S-A)^2}{4}-\sum_{i=0}^{n-1} a_i(b_i-\rho_B)^2,
\quad\text{and}\quad
\Phi_2=\frac{B(S-B)^2}{4}-\sum_{i=0}^{n-1} b_i(a_{i+1}-\rho_A)^2.
\]
In particular, if $S>0$, then we have
\begin{equation}\label{eq:cyclic-variance}
\max\{\Phi_1,\Phi_2\}\le\left(\frac{S}{3}\right)^3,
\qquad E\le\Delta,
\quad\text{and}\quad
\left(A-\frac{S}{3}\right)^2+\left(B-\frac{S}{3}\right)^2\le\frac{12\Delta}{S}.
\end{equation}
Indeed, we have
\[
\frac{S^3}{27}-\frac{A(S-A)^2}{4}
=\frac14\left(A-\frac{S}{3}\right)^2\left(\frac{4S}{3}-A\right).
\]
Since $A\le S$, we have $4S/3-A\ge S/3$. Applying the same identity with $B$ in place of $A$ gives the stated bounds. The bounds $\Phi_i\le(S/3)^3$ for $i\in\{1,2\}$ are attained simultaneously at
\begin{equation}\label{eq:cyclic-test}
a_i=b_i=\frac{S}{3n},
\qquad c=\frac S3\left(1-\frac{1}{n}\right),\qquad d=\frac S{3n}.
\end{equation}

For each $i\in\mathbb Z/n\mathbb Z$, put $X_i\coloneqq\{A_i,B_i,A_{i+1},B_{i+1}\}$. These 4 colors are distinct since $n\ge2$. Put $h\coloneqq\max\{0,\rho_A,\rho_B\}$. The next claim bounds the individual weights when $\Delta<h^3/28$ and the sum $\sum_{i=0}^{n-1} a_i b_i a_{i+1}b_{i+1}$ otherwise.

\begin{claim}\label{lem:cyclic-control}
If $S>0$ and $\Delta<h^3/28$, then $a_i,b_i\in(S/(27n),9S/(4n)]$ for all $i$. If $\Delta\ge h^3/28$, then $\sum_{i=0}^{n-1} a_i b_i a_{i+1}b_{i+1}\le\frac{1675}{16}A\Delta$.
\end{claim}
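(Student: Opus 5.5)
The claim has two regimes, and I would prove both from the two facts already recorded in \eqref{eq:cyclic-variance}: $E\le\Delta$, and the closeness of $A,B$ to $S/3$. The basic observation is that every summand of $E$ is individually at most $\Delta$:
\[
a_i(b_i-\rho_B)^2\le\Delta,\qquad b_i(a_{i+1}-\rho_A)^2\le\Delta\qquad\text{for all }i.
\]
A second elementary fact is a dichotomy for each weight: for $x\in\{b_i\}$ with $\rho=\rho_B$, or $x\in\{a_{i+1}\}$ with $\rho=\rho_A$, either $x\le 2h$ or $x\le 2|x-\rho|$. The second alternative holds because if $x>2h\ge2\rho$, then $x-\rho\ge x/2$.

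\emph{Second statement.} Assume $\Delta\ge h^3/28$, that is, $h^3\le28\Delta$. I would write
\[
\sum_{i}a_ib_ia_{i+1}b_{i+1}\le A\max_i\, b_ia_{i+1}b_{i+1}.
\]
It then suffices to bound each triple product $b_ia_{i+1}b_{i+1}$ by a constant times $\Delta$. Apply the dichotomy to $a_{i+1}$ and to $b_{i+1}$.
\begin{itemize}
\item If $a_{i+1}\le2|a_{i+1}-\rho_A|$, then $b_ia_{i+1}\cdot a_{i+1}\le 4b_i(a_{i+1}-\rho_A)^2\le4\Delta$. This controls a product involving $a_{i+1}^2$, so in this subcase I would pair it with $b_{i+1}$ via the next alternative or use $b_{i+1}\le 2h$.
\item If $b_{i+1}\le2|b_{i+1}-\rho_B|$, use $a_{i+1}(b_{i+1}-\rho_B)^2\le\Delta$ in the same way.
\item If both weights are at most $2h$, then $b_ia_{i+1}b_{i+1}\le b_i\cdot4h^2$. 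Here $b_i$ is either at most $2h$ (giving $8h^3\le224\Delta$) or at most $2|b_i-\rho_B|$.
\end{itemize}
Mixed cases should be handled by AM--GM between the cubic term $h^3$ and the quadratic-weighted terms of $E$. I expect that optimizing the split thresholds, rather than using the crude factor $2$, produces the stated constant $1675/16$. The exact constant is routine once the case tree is fixed, and I would tune the thresholds at the end.

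\emph{First statement.} Assume $\Delta<h^3/28$, so $h>0$ and every summand of $E$ is smaller than $h^3/28$. First I would locate $h$. Since $a_i$ is close to $\rho_A$ in an $a$-weighted $L^2$ sense, summing gives $A\approx n\rho_A$ and $B\approx n\rho_B$. Combined with $A,B\approx S/3$ from \eqref{eq:cyclic-variance}, this pins $\rho_A,\rho_B,h$ within constant factors of $S/(3n)$.

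Then I would run a propagation argument around the cycle. Suppose some $a_{i+1}\le S/(27n)$. Then $(a_{i+1}-\rho_A)^2$ is of order $h^2$, so $b_i(a_{i+1}-\rho_A)^2<h^3/28$ forces $b_i$ to be a small fraction of $h$. In turn $a_i(b_i-\rho_B)^2<h^3/28$ forces $a_i$ to be small. Iterating makes every weight small, contradicting $A\approx S/3$. Symmetrically, a weight exceeding $9S/(4n)$ has deviation of order at least $h$, which forces its cyclic predecessor to be small and starts the same chain.

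I expect the main obstacle to be making this propagation quantitatively non-degenerate. The smallness must not decay along the chain of length $2n$, so the contraction factor has to be bounded uniformly. I would first try to show that ``$x\le\kappa h$'' implies ``next weight $\le\kappa h$'' for a fixed $\kappa<1$ derived from the constant $28$.
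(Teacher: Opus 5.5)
There is a genuine gap in your argument for the second statement. The reduction $\sum_i a_ib_ia_{i+1}b_{i+1}\le A\max_i b_ia_{i+1}b_{i+1}$ is valid. The bound it then requires, $b_ia_{i+1}b_{i+1}=O(\Delta)$ for each $i$, is false.

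Take $d=0$ and $A=B=c=S/3$, so that $\rho_A=\rho_B=h=0$ and $\Delta=E$. Set $a_0=0$, $b_0=M^2\epsilon$, $a_1=\epsilon$, $b_1=M\epsilon$, $a_2=0$. Put the remaining $A$-mass on $a_5$ and the remaining $B$-mass on $b_{10}$, and make all other auxiliary weights zero. Then $\Delta=E=b_0a_1^2+a_1b_1^2=2M^2\epsilon^3$, whereas $b_0a_1b_1=M^3\epsilon^3$. The only summands of $E$ touching this triple are $b_0a_1^2$ and $a_1b_1^2$, so no tuning of your thresholds or AM--GM step can bound the triple product. The claim holds in this example only because $a_0=a_2=0$ kill every four-fold product: the factor $a_i$ you discarded is essential.

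The paper keeps $a_i$ and bounds each four-fold product by $A$ times one summand of $E$ attached to $X_i$.
If $b_i,a_{i+1},b_{i+1}\le5h/4$, the product is at most $(5h/4)^3a_i$.
Otherwise let $u$ be the largest of the three, and use the summand in which $u$ is squared: $a_i(b_i-\rho_B)^2$, $b_i(a_{i+1}-\rho_A)^2$, or $a_{i+1}(b_{i+1}-\rho_B)^2$. Since $\rho_A,\rho_B\le h<4u/5$, this summand is at least $1/25$ of the monomial $a_ib_i^2$, $b_ia_{i+1}^2$, or $a_{i+1}b_{i+1}^2$. The four-fold product divided by that monomial is at most $a_{i+1}$, $a_i$, or $a_i$ respectively, hence at most $A$.
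Summing over $i$, each summand of $E$ serves at most two sets $X_i$. This gives $50AE+\frac{125}{64}Ah^3\le\frac{1675}{16}A\Delta$, which is where the constant comes from.

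For the first statement your propagation idea is the right one, but the order of steps is circular. The summands of $E$ only say that $b_i$ is close to $\rho_B$ where $a_i$ is large, and $a_{i+1}$ is close to $\rho_A$ where $b_i$ is large. So $A\approx n\rho_A$ does not follow until you know the weights are spread out. Likewise, a threshold phrased as $S/(27n)$ presupposes the location of $h$.

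Instead, proceed as follows.
First show $2h/3<\rho_A,\rho_B\le h$, using $\rho_A-\rho_B=(A-B)/2$, \eqref{eq:cyclic-variance}, and $h\le S/2$; obtain $A+B>S/3$ from \eqref{eq:cyclic-variance} as well.
Then pass to the cyclic sequence $\widehat u$ alternating $a_i/\rho_A$ and $b_i/\rho_B$. Its sum exceeds $(A+B)/h>2/3$. Put $K=\sum_i\widehat u_i(1-\widehat u_{i+1})^2$, so that $K\le27E/(8h^3)<1/8$.
A fixed threshold removes your decay worry. A term $\widehat u_i\ge1/2$ followed by $\widehat u_{i+1}<1/2$ contributes more than $1/8$ to $K$. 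If all terms were below $1/2$, then $K>\frac14\cdot\frac23$. Hence every $\widehat u_i\ge1/2$.
Then $|\widehat u_i-1|\le\sqrt{2K}<1/2$ gives $h/3<a_i,b_i<3h/2$. Only at this point does summing against $S/3<A+B\le S$ place $h$ in $(S/(9n),3S/(2n))$ and yield the stated interval.
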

\begin{proof}[Proof of the claim]
For the first assertion, we have $0<h\le S/2$. By~\eqref{eq:cyclic-variance}, $|\rho_A-\rho_B|^2\le6\Delta/S<3h^2/28<h^2/9$, so $2h/3<\rho_A,\rho_B\le h$. We also have $A+B\ge2S/3-\sqrt{24\Delta/S}>S(2/3-\sqrt{3/28})>S/3$. Let $\widehat u_1,\ldots,\widehat u_{2n}$ be the cyclic sequence alternating $a_i/\rho_A$ and $b_i/\rho_B$. Read the subscripts modulo $2n$. This sequence has sum greater than $2/3$, and
\[
K\coloneqq\sum_{i\in[2n]}\widehat u_i(1-\widehat u_{i+1})^2\le\frac{27E}{8h^3}<\frac{27}{224}<\frac18.
\]
Here the two alternating types of summands in $K$ are $a_i(b_i-\rho_B)^2/(\rho_A\rho_B^2)$ and $b_i(a_{i+1}-\rho_A)^2/(\rho_B\rho_A^2)$. Their denominators are at least $(2h/3)^3$, giving the bound in terms of $E$. For any cyclic nonnegative sequence with sum at least $1/2$ and $K<1/8$, every coordinate is at least $1/2$. Otherwise, either all coordinates are below $1/2$, giving $K>\frac14\sum_{i\in[2n]}\widehat u_i\ge1/8$, or a cyclic transition from a coordinate at least $1/2$ to one below $1/2$ contributes more than $1/8$. Using $\widehat u_{i-1}\ge1/2$ in $\widehat u_{i-1}(1-\widehat u_i)^2\le K$ gives $|\widehat u_i-1|\le\sqrt{2K}<1/2$. Thus $h/3<a_i,b_i<3h/2$. Summing and using $S/3<A+B\le S$ yields $S/(9n)<h<3S/(2n)$, which gives the first assertion.

For the second assertion, write $(u_0,u_1,u_2,u_3)\coloneqq(a_i,b_i,a_{i+1},b_{i+1})$ for the consecutive weights of $X_i$. If the last 3 weights are at most $5h/4$, their product with $u_0$ is at most $(125/64)a_i h^3$. Otherwise choose a largest one, say $u_k>5h/4$ for $k\in\{1,2,3\}$. For $k\in\{1,3\}$, the sum $E$ contains $u_{k-1}(u_k-\rho_B)^2$; for $k=2$, it contains $u_1(u_2-\rho_A)^2$. In either case this term is at least $u_{k-1}u_k^2/25$, since $\rho_A,\rho_B\le h<4u_k/5$. If $u_0u_1u_2u_3=0$, the required bound is immediate. Otherwise, the ratio of $u_0u_1u_2u_3$ to $u_{k-1}u_k^2$ is at most $A$. Indeed, for $k=1,2,3$ it is bounded respectively by $u_2,u_0,u_0$. The term $a_i(b_i-\rho_B)^2$ can be used only for $X_{i-1}$ or $X_i$, whereas $b_i(a_{i+1}-\rho_A)^2$ can be used only for $X_i$. Counting each summand of $E$ at most twice therefore gives
\[
\sum_{i=0}^{n-1} a_i b_i a_{i+1}b_{i+1}\le50AE+\frac{125}{64}Ah^3
\le\left(50+\frac{125\cdot28}{64}\right)A\Delta
=\frac{1675}{16}A\Delta.\qedhere
\]
\end{proof}

We next bound the edge polynomials $R_j$, including their first derivatives at the weighting that is constant within each class.

\begin{claim}\label{lem:connectors}
The following statements hold.
\begin{enumerate}
\item The $\mathcal M_j$ are pairwise edge-disjoint simple $9$-graphs. The $9$-graph $\bigcup_{j=0}^{q-1}\mathcal M_j$ is $9$-partite, and $\sum_{j=0}^{q-1}R_j\le(S/9)^9$.
\item Every $A$-color has degree 5 and every $B$-color degree 4 in each $\mathcal M_j$. At the weighting $a_i=A/n$, $b_i=B/n$, we have $R_j=A^5B^4/n^8$, $\partial_{a_i}R_j=5(A/n)^4(B/n)^4$, and $\partial_{b_i}R_j=4(A/n)^5(B/n)^3$.
\item If $\Delta\ge h^3/28$, then $R_j\le\frac{1675}{1728}A^4B^2\Delta$.
\end{enumerate}
\end{claim}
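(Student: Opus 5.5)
The plan is to prove the three parts in order, using only the cyclic indexing $v_{2i}=A_i$, $v_{2i+1}=B_i$ modulo $2n$ and the standing assumptions $n\in18\N$ and $n\ge36(q+1)$. Parts (i) and (ii) are combinatorial bookkeeping about the index sets in~\eqref{eq:connector-edges}. Part (iii) combines the second assertion of \cref{lem:cyclic-control} with a pointwise AM--GM bound.

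For (i), I will first check that each $E_{j,t}$ has 9 distinct elements. The eight positions $2t+\ell$ with $\ell\in\{0,1,2,3,5,6,7,8\}$ are distinct, and they lie in a window of length $9<2n$. The extra position $2t+4+18j$ coincides with $2t+4$ only when $j=0$, and $2t+4$ is exactly the omitted slot. When $1\le j<q$, we have $18\le 18j<18q\le n/2$. So $2t+4+18j$ lies strictly beyond the window and before wrapping around modulo $2n$, hence it is a new element.

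For 9-partiteness, note that $2n$ is divisible by $36$, so the residue of a position modulo $9$ is well defined on $\mathbb Z/2n\mathbb Z$. Partition the auxiliary $A$- and $B$-colors by this residue. The window contributes residues $2t+\{0,1,2,3,5,6,7,8\}$, and the extra position has residue $2t+4+18j\equiv 2t+4\pmod 9$. Thus every edge meets each of the 9 classes exactly once.

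The same window structure gives simplicity and edge-disjointness. The set $E_{j,t}$ determines $t$, since its unique run of four consecutive positions $\{2t,\ldots,2t+3\}$ followed by a gap begins at $2t$. Once $t$ is known, the set determines $j$ through the position of its extra element. Hence all $qn$ sets $E_{j,t}$ are distinct.

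For the bound on $\sum_{j}R_j$, observe that $\bigcup_{j}\mathcal M_j$ is a simple $9$-partite $9$-graph. Its edge polynomial is therefore at most the product of the 9 class weight sums. Those sums total $A+B\le S$, so AM--GM gives $\sum_j R_j\le(S/9)^9$.

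For (ii), I will count incidences directly. The color $A_i=v_{2i}$ lies in $E_{j,t}$ in two ways. It can occupy window slot $2t+\ell$ for $\ell\in\{0,2,6,8\}$, which gives 4 values of $t$. It can also be the moved element, $2t+4+18j=2i$, which gives exactly one more value of $t$. So $A_i$ has degree 5. The color $B_i=v_{2i+1}$ occupies only odd window slots $\ell\in\{1,3,5,7\}$, so it has degree 4. Consequently each edge contains 5 $A$-colors and 4 $B$-colors, as the displayed formula for $R_j$ shows.

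Substituting $a_i=A/n$ and $b_i=B/n$ then gives $R_j=n(A/n)^5(B/n)^4=A^5B^4/n^8$. Differentiating term by term, the derivative with respect to a given $a_i$ or $b_i$ counts the degree of that color. This gives $\partial_{a_i}R_j=5(A/n)^4(B/n)^4$ and $\partial_{b_i}R_j=4(A/n)^5(B/n)^3$.

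For (iii), which I expect to be the only step needing care, I will factor each monomial of $R_j$ as
\[
\bigl(a_tb_ta_{t+1}b_{t+1}\bigr)\cdot\bigl(a_{t+2+9j}a_{t+3}a_{t+4}\,b_{t+2}b_{t+3}\bigr),
\]
where the first factor is the product of the weights of $X_t$.

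I will bound the second factor pointwise. Its three $A$-weights belong to distinct colors, by the distinctness established in (i), so their sum is at most $A$ and their product is at most $A^3/27$ by AM--GM. Its two $B$-weights are distinct, so their product is at most $B^2/4$. Hence the second factor is at most $A^3B^2/108$, uniformly in $t$.

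Summing over $t$ and applying the second assertion of \cref{lem:cyclic-control}, valid because $\Delta\ge h^3/28$, yields
\[
R_j\le\frac{A^3B^2}{108}\sum_{t=0}^{n-1}a_tb_ta_{t+1}b_{t+1}\le\frac{A^3B^2}{108}\cdot\frac{1675}{16}A\Delta=\frac{1675}{1728}A^4B^2\Delta.
\]
The only point to double-check is that the five colors in the second factor are pairwise distinct and disjoint from $X_t$. This follows from the position analysis in (i), since $n$ is large compared with $18q$.
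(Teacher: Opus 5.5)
Your proof follows the paper's argument. You use the same partition of $v_0,\dots,v_{2n-1}$ by index modulo $9$ to get $\sum_j R_j\le(S/9)^9$, and the same factorization of each monomial through $X_t$ with the pointwise bound $(A/3)^3(B/2)^2$ before invoking the second part of \cref{lem:cyclic-control}. Your direct slot count for the degrees in (ii) replaces the paper's translation-by-two symmetry plus incidence count, and is equally short.

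One step in (i) is wrong as stated. $E_{j,t}$ does not have a unique run of four consecutive positions followed by a gap.
\begin{itemize}
\item For $j\ge1$, both $\{2t,\dots,2t+3\}$ and $\{2t+5,\dots,2t+8\}$ are maximal runs of length four followed by a missing position.
\item For $j=0$, the run $\{2t,\dots,2t+3\}$ is followed by $2t+4\in E_{0,t}$, not by a gap.
\end{itemize}
The repair is the paper's argument, in three steps.
\begin{itemize}
\item $E_{0,t}$ is the only kind of edge that is a cyclic interval. Its starting point $2t$ is unique because $9<2n$.
\item For $j\ge1$, the extra index $2t+4+18j$ is adjacent to neither block, since $18j\ne5$ and $0<18j+5<2n$. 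So the maximal runs are exactly the two blocks of four and a singleton.
\item Exactly one of those two blocks starts at an even position, namely $2t$, which recovers $t$.
\end{itemize}
Once $t$ is recovered, your observation that the extra element determines $j$ finishes simplicity and edge-disjointness. You need both, because the bound $\sum_j R_j\le(S/9)^9$ uses that the union is a simple $9$-partite $9$-graph.
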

\begin{proof}[Proof of the claim]
For (i), observe that the 9 indices defining $E_{j,t}$ in~\eqref{eq:connector-edges} are distinct modulo $2n$. For $j=0$ they form 1 block of 9 consecutive positions. For $j>0$ they form 2 blocks of 4 consecutive positions and 1 isolated position. Of the 2 blocks of length 4, exactly 1 starts at an even index. That block starts at $2t$, which determines $t$, and the isolated index $2t+4+18j$ then determines $j$. The inequality $n\ge36(q+1)$ prevents the isolated index from being adjacent to either block in the cyclic order modulo $2n$. This proves simplicity and disjointness, both within and between the families.

Partition the auxiliary colors $v_0,\ldots,v_{2n-1}$ according to their indices modulo 9. Since $18j\equiv0\pmod9$ and $2n$ is divisible by 9, every connecting edge meets each part exactly once. Standard results on the Lagrangians of complete multipartite hypergraphs therefore give $\sum_{j=0}^{q-1}R_j\le(S/9)^9$, completing (i).

For (ii), translation by 2 positions preserves $\mathcal M_j$ and acts transitively on each of the $A$- and $B$-classes, so the degrees are constant within each class. Its $n$ edges contain 5 $A$-colors and 4 $B$-colors each, giving $5n$ and $4n$ incidences, respectively. Dividing by the $n$ colors in each class gives degrees 5 and 4. At the balanced weighting, each of the $n$ edge monomials equals $(A/n)^5(B/n)^4$, yielding the formula for $R_j$. Differentiating with respect to an $A$-color leaves 4 $A$-weights and 4 $B$-weights in each of its 5 incident edges; for a $B$-color, it leaves 5 $A$-weights and 3 $B$-weights in each of its 4 incident edges. This gives the two derivative formulas.

For (iii), $E_{j,t}$ contains $X_t$. The set $E_{j,t}\setminus X_t$ consists of 3 $A$-colors and 2 $B$-colors. Hence
\[
R_j\le\left(\frac{A}{3}\right)^3\left(\frac{B}{2}\right)^2\sum_{t=0}^{n-1} a_t b_t a_{t+1}b_{t+1}
\le\frac{1675}{1728}A^4B^2\Delta.
\]
The second inequality follows from \cref{lem:cyclic-control}.
\end{proof}

AM--GM, \eqref{eq:piece-bounds}, \eqref{eq:cyclic-variance}, and \cref{lem:connectors}(i) give
\begin{equation}\label{eq:base-add-four}
\Lambda_{\rm base}\le4T\left(\frac{S}{3}\right)^3\prod_{\ell=1}^8t_\ell\le4\cdot12^{-12},
\quad\text{and}\quad
\Lambda_{\rm add}\le4\left(\frac{S}{9}\right)^9\left(\frac{T}{3}\right)^3\le4\cdot12^{-12}.
\end{equation}
For a lower bound, take the test weighting with $t_\ell=T=1/12$, $S=1/4$, the auxiliary weights in~\eqref{eq:cyclic-test}, and an optimal probability weighting of $P$ scaled to total weight $T$. It yields
\begin{equation}\label{eq:compiler-test}
\lambda(\mathcal H_{n,P})\ge12^{-12}\bigl(4+\lambda(P)n^{-8}\bigr)>4\cdot12^{-12}.
\end{equation}
Hence both parts of the polynomial are positive at every optimal weighting, as are all $t$-weights and the group totals $S,T$.

Fix an optimal probability weighting $\bx$ and put $\eta\coloneqq\Lambda_{\rm add}/\lambda(\mathcal H_{n,P})\in(0,1)$. We first estimate the total weights of the color classes. At an optimal weighting, all positive coordinates have the same partial derivative. Euler's identity and the total weight 1 show that this common value is $12\lambda(\mathcal H_{n,P})$. Each base monomial contains every $t_\ell$ to exponent 1, whereas additional monomials contain no $t$-variables. Each base monomial has total degree 3 in the auxiliary variables and total degree 1 in the weights of the colors of $P$; for each additional monomial, these total degrees are 9 and 3, respectively. Multiplying the derivative equations by the corresponding weights and summing over each group, with zero coordinates contributing zero, gives $12t_\ell\lambda(\mathcal H_{n,P})=\Lambda_{\rm base}$, $12S\lambda(\mathcal H_{n,P})=3\Lambda_{\rm base}+9\Lambda_{\rm add}$, and $12T\lambda(\mathcal H_{n,P})=\Lambda_{\rm base}+3\Lambda_{\rm add}$. Dividing by $12\lambda(\mathcal H_{n,P})$ and using the definition of $\eta$, we obtain
\begin{equation}\label{eq:compiler-euler}
t_\ell=\frac{1-\eta}{12},\qquad S=\frac{1+2\eta}{4},\qquad T=\frac{1+2\eta}{12}.
\end{equation}
Since $\lambda(\mathcal H_{n,P})>4\cdot12^{-12}$, equation~\eqref{eq:base-add-four} gives $\eta<12^{12}\Lambda_{\rm add}/4\le12^{12}(S/9)^9(T/3)^3$. Applying AM--GM to 9 copies of $S/9$ and 3 copies of $T/3$ now gives $\eta<(S+T)^{12}=((4+8\eta)/12)^{12}$.
The function $\eta\mapsto((4+8\eta)/12)^{12}-\eta$ is strictly convex, vanishes at 1, and is negative at $10^{-5}$. Convexity makes it negative throughout $[10^{-5},1)$, whereas the preceding inequality requires it to be positive at $\eta$. Since $0<\eta<1$, we obtain $\eta<10^{-5}$.

Keeping the deficit $\Delta$ in the base polynomial gives
\begin{equation}\label{eq:base-retained}
\Lambda_{\rm base}=2T\left(\frac{2S^3}{27}-\Delta\right)\prod_{\ell=1}^8t_\ell\le4\cdot12^{-12}-2T\Delta\prod_{\ell=1}^8t_\ell.
\end{equation}
Since $\Lambda_{\rm base}=(1-\eta)\lambda(\mathcal H_{n,P})>4\cdot12^{-12}(1-\eta)$, we have $\Delta<2\cdot12^{-12}\eta/(T\prod_{\ell=1}^8t_\ell)$. Equations~\eqref{eq:compiler-euler} and~\eqref{eq:cyclic-variance} imply
\[
S<\frac{3.00006}{12},\qquad T<\frac{1.00002}{12},\qquad \prod_{\ell=1}^8t_\ell>\frac{0.9999}{12^8},\quad\text{and}\quad\max\{A,B\}<\frac{1.01}{12}.
\]
For the bound on $\max\{A,B\}$, $12^3\Delta<2\cdot10^{-5}/0.9999$ and $\max\{|A-S/3|,|B-S/3|\}\le\sqrt{12\Delta/S}$ suffice.

We now show that $\Delta<h^3/28$. If $\Delta\ge h^3/28$, then \cref{lem:connectors} and $\sum_{j=0}^{q-1}D_j(\bz)\le T^3$ give
\[
\lambda(\mathcal H_{n,P})\le4\cdot12^{-12}-\left(2T\prod_{\ell=1}^8t_\ell-\frac{1675}{1728}A^4B^2T^3\right)\Delta\le4\cdot12^{-12},
\]
since, after multiplication by $12^9$, the coefficient in parentheses is greater than $2\cdot0.9999-(1675/1728)(1.01)^6(1.00002)^3>0.94$. Here we used $T\ge1/12$ from~\eqref{eq:compiler-euler} as well as the preceding bounds. This contradicts~\eqref{eq:compiler-test}. Hence $\Delta<h^3/28$, and \cref{lem:cyclic-control} gives $a_i,b_i\in(1/(108n),7/(12n))$.
Since every connector has 9 vertices and there are $n$ connectors in each family, we have $R_j\le n(7/(12n))^9=(7/12)^9n^{-8}$. Since $\sum_{j=0}^{q-1}D_j(\bz)\le T^3$, we also have $\Lambda_{\rm add}=O(n^{-8})$. By~\eqref{eq:base-retained} and~\eqref{eq:compiler-test}, we have $2T\Delta\prod_{\ell=1}^8t_\ell<\Lambda_{\rm add}$. Since $T\prod_{\ell=1}^8t_\ell$ is bounded below by an absolute positive constant, this gives $\Delta=O(n^{-8})$. Also $\eta<12^{12}\Lambda_{\rm add}/4=O(n^{-8})$.
All implicit constants in this proof are absolute; in particular, they are independent of $P$ and $q$.

We next show that the weights are constant within each auxiliary class. Put $\xi_i\coloneqq a_i-A/n$ and $\zeta_i\coloneqq b_i-B/n$, so $\sum_{i=0}^{n-1}\xi_i=\sum_{i=0}^{n-1}\zeta_i=0$. Write $V\coloneqq\sum_{i=0}^{n-1}\xi_i^2+\sum_{i=0}^{n-1}\zeta_i^2$. Keep $A,B,c,d$, the weighting of $P$, and the $t$-weights fixed, and equalize the weights within each auxiliary class. We denote this balanced weighting by $\bar{\bx}$.

Since $b_i>1/(108n)$, we have $E\ge(108n)^{-1}\sum_{i=0}^{n-1}(a_{i+1}-\rho_A)^2\ge\frac1{108}(A/n-\rho_A)^2$, where the second inequality is Cauchy--Schwarz. Together with $E\le\Delta$, this gives $|A/n-\rho_A|\le\sqrt{108\Delta}$; the same argument gives $|B/n-\rho_B|\le\sqrt{108\Delta}$. Expanding the square-completion formulas gives
\begin{align*}
\Phi_1(\bar{\bx})-\Phi_1(\bx)
&=\sum_{i=0}^{n-1} a_i\zeta_i^2+2\left(\frac{B}{n}-\rho_B\right)\sum_{i=0}^{n-1}\xi_i\zeta_i,\\
\text{and}\qquad \Phi_2(\bar{\bx})-\Phi_2(\bx)
&=\sum_{i=0}^{n-1} b_i\xi_{i+1}^2+2\left(\frac{A}{n}-\rho_A\right)\sum_{i=0}^{n-1}\zeta_i\xi_{i+1}.
\end{align*}
Cauchy--Schwarz gives $|\sum_{i=0}^{n-1}\xi_i\zeta_i|+|\sum_{i=0}^{n-1}\zeta_i\xi_{i+1}|\le V$. The two cross terms therefore have sum at least $-2\sqrt{108\Delta}V$. Since $a_i,b_i>1/(108n)$, the sum of the two differences is at least $(1/(108n)-2\sqrt{108\Delta})V$. Using $\Delta=O(n^{-8})$, we obtain a lower bound of $V/(216n)$ for all sufficiently large $n$. Multiplying by $2T\prod_{\ell=1}^8t_\ell$ shows that the base part increases by at least $\frac{TV}{108n}\prod_{\ell=1}^8t_\ell$.

To bound the possible loss in the additional part, we use \cref{lem:connectors}(ii), which states that the partial derivatives of each $R_j$ at the balanced weighting are constant within each auxiliary class. The weight changes sum to zero in each class, so $\sum_{i=0}^{n-1}\bigl(\partial_{a_i}R_j(\bar{\bx})\xi_i+\partial_{b_i}R_j(\bar{\bx})\zeta_i\bigr)=0$. Thus the first-order term in the balancing direction vanishes at $\bar{\bx}$. Along the segment from the original point to the balanced weighting, each weight $a_i$ or $b_i$ is at most $7/(12n)$. Each color occurs in at most 5 edges of $\mathcal M_j$, with 8 other vertices per edge. For any 2 distinct colors in an edge, that edge contributes the product of the other 7 weights to the corresponding mixed second derivative of $R_j$. Thus every row sum of the Hessian of $R_j$ with respect to $(a_i,b_i)_{i=0}^{n-1}$ is at most $40(7/(12n))^7$. The Hessian is symmetric with nonnegative entries, so this also bounds its operator norm. Expanding at $\bar{\bx}$, where the first-order term vanishes, and using $\|\bx-\bar{\bx}\|_2^2=V$, Taylor's formula gives $|R_j(\bx)-R_j(\bar{\bx})|\le\frac12\cdot40(7/(12n))^7V=O(n^{-7})V$. Using $\sum_{j=0}^{q-1}D_j\le T^3$ once more, the additional part can decrease by at most $O(n^{-7})V$. Since $T\prod_{\ell=1}^8t_\ell$ is bounded below by an absolute positive constant, this loss is smaller than the gain in the base part for all sufficiently large $n$ whenever $V>0$, contradicting optimality. This proves (i).

It remains to determine the optimized value and its expansion. At the balanced weighting, every edge polynomial $R_j$ equals $A^5B^4/n^8$, so we have
\[
\sum_{j=0}^{q-1}R_jD_j(\bz)=\frac{A^5B^4}{n^8}\Lambda_P(\bz).
\]
By homogeneity, optimizing over the weights of the colors of $P$ at fixed total $T$ gives
\[
\max_{\substack{z_c\ge0\ (c\in C(P))\\\sum_{c\in C(P)}z_c=T}}\Lambda_P(\bz)=T^3\lambda(P).
\]
Substituting the balanced auxiliary weights, define
\[
\Psi_n(x)\coloneqq\max\Biggl\{2TA\left(\frac{B}{n}+c\right)\left(B-\frac{B}{n}+d\right)\prod_{\ell=1}^8t_\ell+2TB\left(\frac{A}{n}+c\right)\left(A-\frac{A}{n}+d\right)\prod_{\ell=1}^8t_\ell+\frac{A^5B^4T^3}{n^8}x\Biggr\},
\]
where the maximum is over $(A,B,c,d,T,t_1,\ldots,t_8)\in\mathbb{S}_{13}$. This is a compact domain. Every maximizing weighting of $\mathcal H_{n,P}$ is constant within each auxiliary class, and every point in this domain can be completed by an optimal weighting of $P$ scaled to total weight $T$. Hence $\lambda(\mathcal H_{n,P})=\Psi_n(\lambda(P))$.

On every compact interval of values of $x$, the objective is uniformly continuous jointly in $x$ and the feasible weights. Hence $\Psi_n$ is continuous. For $x>0$, the test weighting gives $\Psi_n(x)\ge12^{-12}(4+xn^{-8})>4\cdot12^{-12}$. At any maximizer the coefficient of $x$ must be positive, because the base part is at most $4\cdot12^{-12}$. For $0<x<y$, testing a maximizing weighting for $x$ at the parameter $y$ gives $\Psi_n(y)\ge\Psi_n(x)+\frac{A^5B^4T^3}{n^8}(y-x)>\Psi_n(x)$, proving strict monotonicity.

Finally, \eqref{eq:cyclic-variance} and~\eqref{eq:compiler-euler}, together with $\Delta=O(n^{-8})$ and $\eta=O(n^{-8})$, give $A,B=1/12+O(n^{-4})$ and $T=1/12+O(n^{-8})$. Hence $A^5B^4T^3=12^{-12}+O(n^{-4})$. Since the base part is at most $4\cdot12^{-12}$ and $\lambda(P)\le1$, we obtain $\lambda(\mathcal H_{n,P})\le12^{-12}(4+\lambda(P)n^{-8})+O(n^{-12})$. The test weighting in~\eqref{eq:compiler-test} gives the reverse bound $\lambda(\mathcal H_{n,P})\ge12^{-12}(4+\lambda(P)n^{-8})$, proving both the expansion and the nonnegativity of its error in (iii).
\end{proof}

\section{Intervals of uniform densities}\label{sec:interval}
We first prove \cref{thm:seed} by varying $P$ and then $n$ in the preceding construction, and lifting the resulting interval to higher uniformities. We then combine this interval with finite-palette values supplied by ordinary densities to prove \cref{thm:main}.

\begin{proof}[Proof of \cref{thm:seed}]
We first prove the case $r=12$. Fix a sufficiently large $n\in18\N$ with $n\ge36(q+1)$. For $x\in[a,b]$, take the projective sequence of palettes equipped with homomorphisms to $P_*$ from \cref{prop:input}. The palettes $\mathcal H_{n,P_i}$ form a projective sequence because the construction preserves homomorphisms commuting with the maps to $P_*$. \Cref{prop:exact-reader} and continuity give $\lim_{i\to\infty}\lambda(\mathcal H_{n,P_i})=\Psi_n(x)$. \Cref{thm:projective} implies $[L_n,U_n]\subseteq\Pu{12}$, where $L_n\coloneqq\Psi_n(a)$ and $U_n\coloneqq\Psi_n(b)$. Each point in this interval has the stated projective realization. Passing the uniform estimate in \cref{prop:exact-reader}(iii) along each such sequence yields
\[
L_n=12^{-12}(4+an^{-8})+O(n^{-12}),\quad\text{and}\quad U_n=12^{-12}(4+bn^{-8})+O(n^{-12}).
\]
Both endpoints exceed $4\cdot12^{-12}$ and tend to $4\cdot12^{-12}$. Moreover, we have
\[
U_{n+18}-L_n=12^{-12}(b-a)n^{-8}+O(n^{-9})>0
\]
for all sufficiently large $n\in18\N$, and the same estimate holds for $U_n-L_{n+18}$. Thus the consecutive intervals $[L_n,U_n]$ and $[L_{n+18},U_{n+18}]$ overlap for all sufficiently large admissible $n$. The union of $[L_n,U_n]$ over all sufficiently large $n\in18\N$ contains $(4\cdot12^{-12},(4+\eps)12^{-12}]$ for some $\eps>0$.

The palette consisting of the single tuple $(c,c,c_3,\ldots,c_{12})$, where $c,c_3,\ldots,c_{12}$ are pairwise distinct, has Lagrangian $4\cdot12^{-12}$. Its constant sequence realizes the left endpoint.

For every $r\ge12$, the lifting operation in \cref{lem:orientation-lift} gives $r^r\lambda(\operatorname{lift}_{r}(P))=12^{12}\lambda(P)$ and preserves homomorphisms. Applying it to each palette in the realizing sequences gives projective realizations of every point in $[4r^{-r},(4+\eps)r^{-r}]$. By \cref{thm:projective}, these points belong to $\Pu r$. The same $\eps$ works for every $r\ge12$.
\end{proof}

To extend the 12-uniform interval, we need finite-palette values that are dense in higher uniformities. Recall the ordinary Lagrangian $\lambda(H)$ from \cref{sec:palettes}. By the ordinary-density theorem of Li, Liu, and Mubayi~\cite[Theorem 1.1]{ordinary}, we may fix an integer $s_0\ge5$ such that
\begin{equation}\label{eq:ordinary-input}
\Pord s=\{0\}\cup\left[\frac{s!}{s^s},1\right]\qquad\text{for every }s\ge s_0.
\end{equation}
We use this choice of $s_0$ throughout the remaining arguments. The following lemma uses this identity to show that finite-palette Lagrangians are dense in $[s^{-s},1]$ for every $s\ge s_0$.

\begin{lemma}\label{lem:dense-finite-values}
For every $s\ge s_0$, the set $\Du_s\cap[s^{-s},1]$ is dense in $[s^{-s},1]$, and $\{s^{-s},1\}\subseteq\Du_s$.
\end{lemma}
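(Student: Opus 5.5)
The two endpoints are explicit. The palette with one rainbow tuple has Lagrangian $s^{-s}$. The palette $[1]^s$ consisting of all tuples on one color has polynomial $x_1^s$, so its Lagrangian is $1$. Both values therefore lie in $\Du_s$.

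For density, I would transfer ordinary Lagrangians into palette Lagrangians via \cref{lem:orientation-lift}(i). For a finite simple $s$-graph $H$ and $1\le m\le s!$, that result gives $\lambda(P_m(H))=\frac{m}{s!}\lambda(H)$. By Brown and Simonovits, the normalized Lagrangians $\lambda(H)$ of finite $s$-graphs are dense in $\Pord s$. By~\eqref{eq:ordinary-input}, $\Pord s$ is $\{0\}\cup[s!/s^s,1]$ for $s\ge s_0$. Hence $\{\lambda(H)\}$ is dense in $[s!/s^s,1]$, and
\[
\Du_s\supseteq\bigcup_{m=1}^{s!}\tfrac{m}{s!}\{\lambda(H)\}
\]
is dense in $\bigcup_{m=1}^{s!}\bigl[m s^{-s},\tfrac{m}{s!}\bigr]$.

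The intervals $I_m\coloneqq[ms^{-s},m/s!]$ for $m=1,\ldots,s!$ need to cover $[s^{-s},1]$. The first is $I_1=[s^{-s},1/s!]$ and the last is $I_{s!}=[s!/s^s,1]$. Consecutive intervals overlap exactly when $(m+1)s^{-s}\le m/s!$, i.e.\ $m(s^s/s!-1)\ge1$. This holds for every $m\ge1$ because $s^s>2\cdot s!$ when $s\ge3$. So the union is the full interval $[s^{-s},1]$, which gives density.

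The only substantive input is the density of finite Lagrangians in $\Pord s$, which the paper already cites from Brown and Simonovits and Pikhurko. The main point to check is the overlap chain, which holds easily; the rest is bookkeeping.
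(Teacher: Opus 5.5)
Your proof is correct and follows essentially the same route as the paper. You scale the ordinary Lagrangians, which are dense in $[s!/s^s,1]$ by \eqref{eq:ordinary-input} and Brown--Simonovits, via the oriented palettes $P_m(H)$ into the overlapping intervals $[ms^{-s},m/s!]$, and you realize the endpoints by a single rainbow tuple and a one-color tuple. Your overlap check $s^s\ge 2\,s!$ is equivalent to the paper's inequality $s!/s^s<1/2\le m/(m+1)$.
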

\begin{proof}
For $1\le m\le s!$, \cref{lem:orientation-lift} gives $\lambda(P_m(H))=(m/s!)\lambda(H)$ for each finite simple $s$-graph $H$.
By~\eqref{eq:ordinary-input} and the density of normalized Lagrangians of finite $s$-graphs, we have $[ms^{-s},m/s!]\subseteq\overline{\Du_s}$. Since $s!/s^s<1/2\le m/(m+1)$, we have $m/s!\ge(m+1)s^{-s}$, so consecutive intervals overlap. The union of these intervals over $1\le m\le s!$ is $[s^{-s},1]$. A single rainbow tuple gives $s^{-s}$, and a one-color palette containing its sole tuple gives 1.
\end{proof}

The following products combine the intervals already constructed with these finite-palette values. Let $P$ be a $k$-palette and $Q$ an $s$-palette on disjoint color sets. Define two $(k+s)$-palettes as follows.
\begin{enumerate}
\item The product $P\star Q$ has color set $C(P)\sqcup C(Q)$ and consists of the concatenations of admissible tuples of $P$ and $Q$.
\item The product $P\diamond Q$ has color set $C(P)\times C(Q)$. A tuple $((c_i,d_i))_{i=1}^{k+s}$ is admissible if $(c_1,\ldots,c_k)$ is admissible in $P$ and $(d_{k+1},\ldots,d_{k+s})$ is admissible in $Q$; the unused components are unrestricted.
\end{enumerate}

\begin{lemma}\label{lem:finite-products}
The two products satisfy $\lambda(P\star Q)=\frac{k^ks^s}{(k+s)^{k+s}}\lambda(P)\lambda(Q)$ and $\lambda(P\diamond Q)=\lambda(P)\lambda(Q)$.
If $P'\to P$ and $Q'\to Q$, with $P'$ a $k$-palette and $Q'$ an $s$-palette on disjoint color sets, then $P'\star Q'\to P\star Q$ and $P'\diamond Q'\to P\diamond Q$.
\end{lemma}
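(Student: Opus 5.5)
We compute each polynomial directly and then optimize over how the total weight splits between the two factors. For $P\star Q$, an admissible tuple is a concatenation of a tuple of $P$ and a tuple of $Q$, so $\Lambda_{P\star Q}(\bx,\by)=\Lambda_P(\bx)\Lambda_Q(\by)$ whenever $\bx$ weights $C(P)$ and $\by$ weights $C(Q)$. Suppose the total weights are $u$ and $1-u$. Homogeneity gives the maximum $\lambda(P)\lambda(Q)u^k(1-u)^s$ for that split. AM--GM, or one-variable calculus, puts the optimum at $u=k/(k+s)$, which yields the factor $k^ks^s/(k+s)^{k+s}$. The degenerate cases where $P$ or $Q$ is empty give $0$ on both sides.

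For $P\diamond Q$, take a weighting $\bw$ on $C(P)\times C(Q)$ and write $x_c=\sum_d w_{(c,d)}$ and $y_d=\sum_c w_{(c,d)}$ for its marginals. Fix an admissible $(c_1,\ldots,c_k)\in P$ and an admissible $(d_{k+1},\ldots,d_{k+s})\in Q$, and sum $\prod_i w_{(c_i,d_i)}$ over the unrestricted components, namely $d_1,\ldots,d_k$ and $c_{k+1},\ldots,c_{k+s}$. The sum factorizes as $\prod_{i\le k}x_{c_i}\prod_{i>k}y_{d_i}$. Hence $\Lambda_{P\diamond Q}(\bw)=\Lambda_P(\bx)\Lambda_Q(\by)$. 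Both marginals are probability vectors, so the value is at most $\lambda(P)\lambda(Q)$. For the reverse bound, take the product weighting $w_{(c,d)}=x_cy_d$ with $\bx,\by$ optimal. Its marginals are $\bx$ and $\by$, so it attains $\lambda(P)\lambda(Q)$.

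For homomorphisms $\phi\colon P'\to P$ and $\psi\colon Q'\to Q$, use $\phi\sqcup\psi$ for $\star$ and $\phi\times\psi$ for $\diamond$. For $\star$, the map is defined because the color sets are disjoint, and a concatenated tuple maps to the concatenation of the image tuples. For $\diamond$, admissibility constrains only the first components of the first $k$ entries and the second components of the last $s$ entries. These entries map to admissible tuples of $P$ and $Q$, and the unrestricted components stay unrestricted.

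The only step that needs care is the marginal factorization in the $\diamond$ case. The point to check is that every unrestricted coordinate ranges freely over its whole factor, independently of the others. I expect no real obstacle beyond this bookkeeping.
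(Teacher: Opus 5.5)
Your proposal is correct and follows the paper's proof essentially step for step. You split the weight as $u$ and $1-u$ and optimize $u^k(1-u)^s$ for $\star$, factorize through the marginals and use the product weighting $x_cy_d$ for $\diamond$, and take $\phi\sqcup\psi$ and $\phi\times\psi$ as the homomorphisms; your explicit summation over the unrestricted components simply spells out what the paper compresses into the remark that the two admissibility conditions use disjoint sets of positions.
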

\begin{proof}
For the first formula, assign total weight $u$ to $C(P)$ and $1-u$ to $C(Q)$. For fixed $u$, the maximum over weightings with these two total weights is $u^k(1-u)^s\lambda(P)\lambda(Q)$. Maximizing over $u$ gives the optimal split $u=k/(k+s)$.

For the second formula, let $(z_{c,d})$ be a probability weighting and let $x_c\coloneqq\sum_{d\in C(Q)}z_{c,d}$ and $y_d\coloneqq\sum_{c\in C(P)}z_{c,d}$ be its marginals. The two conditions use disjoint sets of tuple positions, so $\Lambda_{P\diamond Q}(\bz)=\Lambda_P(\bx)\Lambda_Q(\by)$. Conversely, any optimal pair of marginals can be simultaneously attained by $z_{c,d}=x_cy_d$.

Given homomorphisms $\theta\colon P'\to P$ and $\phi\colon Q'\to Q$, apply $\theta$ to the colors of $P'$ and $\phi$ to the colors of $Q'$ for the $\star$ product. For the $\diamond$ product, use $(c,d)\mapsto(\theta(c),\phi(d))$. These maps take admissible tuples to admissible tuples in the respective target products.
\end{proof}

We also use the elementary covering identity
\begin{equation}\label{eq:interval-cover}
\bigcup_{d\in D}[Ad,Bd]=[A\ell,Bu]
\end{equation}
whenever $0<A<B$, $D$ is dense in $[\ell,u]\subset(0,\infty)$, and $D$ contains both endpoints. For $t\in(A\ell,Bu)$, the permissible interval $[t/B,t/A]\cap[\ell,u]$ has positive length unless $d=\ell$ or $d=u$ already gives $t\in[Ad,Bd]$; choose $d\in D$ there. Thus a dense set of scaling factors covers every point of the stated interval.

\begin{lemma}\label{lem:terminal-27}
For every $r\ge s_0+3$, we have $[27r^{-r},1]\subseteq\Pu r$.
\end{lemma}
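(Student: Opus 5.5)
The plan is to combine a $3$-uniform interval of projectively realized values with dense $s$-uniform finite-palette values, for $s=r-3\ge s_0$, using the product $\diamond$ or $\star$ and the covering identity \eqref{eq:interval-cover}. By \cref{prop:input} there is $\rho\in(0,1)$ such that every $x\in[\rho,1]$ is the limit of a projective sequence of finite $3$-palettes $P_1\leftarrow P_2\leftarrow\cdots$. Fix $s\coloneqq r-3\ge s_0$. For any finite $s$-palette $Q$ with colors disjoint from those of the $P_i$, \cref{lem:finite-products} makes $P_i\star Q$ a projective sequence of $r$-palettes, using the identity map on $Q$. Its Lagrangians are
\[
\lambda(P_i\star Q)=\frac{27s^s}{r^r}\lambda(P_i)\lambda(Q)\longrightarrow\frac{27s^s}{r^r}x\lambda(Q).
\]
By \cref{thm:projective}, every such limit lies in $\Pu r$. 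Hence
\[
\Pu r\supseteq\bigcup_{d\in\Du_s\cap[s^{-s},1]}\Bigl[\tfrac{27s^s}{r^r}\rho d,\ \tfrac{27s^s}{r^r}d\Bigr].
\]

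Next apply \eqref{eq:interval-cover} with $A=27s^s\rho r^{-r}$, $B=27s^sr^{-r}$, and $D=\Du_s\cap[s^{-s},1]$. By \cref{lem:dense-finite-values}, $D$ is dense in $[s^{-s},1]$ and contains both endpoints. This yields $[27\rho r^{-r},27s^sr^{-r}]\subseteq\Pu r$. The left endpoint $27\rho r^{-r}$ is at most $27r^{-r}$, which is what we need. The right endpoint $27s^s/r^r=27(r-3)^{r-3}/r^r$ is far below $1$, so the $\star$ product alone does not reach the top of the interval.

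To cover the remaining range, I would use the $\diamond$ product, which multiplies Lagrangians without the normalizing factor. Take the $3$-palettes $P_i$ as above and an $s$-palette $Q$. By \cref{lem:finite-products}, $\lambda(P_i\diamond Q)\to x\lambda(Q)$, and the transition maps $(c,d)\mapsto(\theta(c),d)$ make $P_i\diamond Q$ a projective sequence. Thus $x\lambda(Q)\in\Pu r$ for all $x\in[\rho,1]$ and $\lambda(Q)\in D$. Applying \eqref{eq:interval-cover} with $A=\rho$ and $B=1$ gives $[\rho s^{-s},1]\subseteq\Pu r$. Since $\rho<1$ and
\[
s^{-s}=(r-3)^{-(r-3)}\le 27\,\frac{(r-3)^{r-3}}{r^r}\quad\text{whenever }(r-3)^{2(r-3)}\ge r^r/27,
\]
which holds for $r\ge s_0+3\ge8$, the two intervals overlap. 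Together they cover $[27\rho r^{-r},1]\supseteq[27r^{-r},1]$. In fact the $\diamond$ interval alone may already suffice, provided $\rho s^{-s}\le27r^{-r}$. If not, the $\star$ interval fills the gap as described.

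The main obstacle is the bookkeeping for the $\diamond$ product. First, one must check that the required homomorphisms exist: \cref{lem:finite-products} provides them once $Q$ is fixed along the sequence. Second, one must confirm that the endpoint inequalities are compatible with the unknown size of $\rho$, since $\rho$ enters only as a multiplicative constant on the left endpoints. I would first try the inequality $s^{-s}\le27s^sr^{-r}$ for $r\ge8$ to close the gap between the two intervals.
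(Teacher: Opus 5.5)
Your proposal is correct and follows essentially the same route as the paper: both products are applied to the projective sequences realizing $[\rho,1]$ and to finite $s$-palettes with $s=r-3$, the covering identity \eqref{eq:interval-cover} then gives $[\rho s^{-s},1]$ and $[27\rho r^{-r},27s^sr^{-r}]$, and these overlap because $s^{-s}<27s^sr^{-r}$. You assert the overlap inequality $s^{2s}\ge r^r/27$ for $r\ge8$ without checking it; the paper verifies it via $\frac{27s^s}{r^r}=\frac{27}{(s+3)^3}(1+3/s)^{-s}>\frac{1}{(s+3)^3}>s^{-s}$, using $(1+3/s)^s<e^3<27$ and $(s+3)^3<s^s$ for $s\ge5$.
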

\begin{proof}
Put $s\coloneqq r-3\ge s_0$, and take the projectively realized interval $[\rho,1]$ from \cref{prop:input}. For each $y\in\Du_s\cap[s^{-s},1]$, apply both products in \cref{lem:finite-products} to its finite realizing palette and to each sequence realizing $x\in[\rho,1]$. \Cref{thm:projective} gives the intervals $[\rho y,y]$ and $\frac{27s^sy}{r^r}[\rho,1]$ in $\Pu r$. \Cref{lem:dense-finite-values} and~\eqref{eq:interval-cover} give the intervals $[\rho s^{-s},1]$ and $[27\rho r^{-r},27s^sr^{-r}]$ in $\Pu r$.
These intervals overlap. Indeed, we have
\[
\frac{27s^s}{r^r}=\frac{27}{(s+3)^3}\left(1+\frac{3}{s}\right)^{-s}
>\frac1{(s+3)^3}>s^{-s}.
\]
Here $(1+3/s)^s<e^3<27$, and $s\ge s_0\ge5$ gives the second inequality. Their union contains $[27\rho r^{-r},1]$, and hence the claimed interval.
\end{proof}

\begin{proof}[Proof of \cref{thm:main}]
Let $r\ge s_0+12$ and put $s\coloneqq r-12$. By \cref{lem:dense-finite-values}, the relative Lagrangians $d\coloneqq s^s\lambda(Q)\in[1,s^s]$ of finite $s$-palettes are dense in this interval and include its endpoints. Apply the $\star$ product to the projective sequences constructed in the proof of \cref{thm:seed} for $r=12$ and a fixed finite palette of relative Lagrangian $d$. The identity $r^r\lambda(P\star Q)=(12^{12}\lambda(P))(s^s\lambda(Q))$ shows that the limits of the relative Lagrangians fill $[4d,(4+\eps)d]$. The product preserves the projective maps, so \cref{thm:projective} realizes the corresponding uniform densities. Taking the union over these values of $d$ and applying~\eqref{eq:interval-cover} proves $[4r^{-r},(4+\eps)s^sr^{-r}]\subseteq\Pu r$.
Since $s\ge5$, the right endpoint exceeds $27r^{-r}$. Also $r\ge s_0+3$, so \cref{lem:terminal-27} extends this interval to $[4r^{-r},1]$. \Cref{thm:attainment}(ii) describes the whole set below $4r^{-r}$. Together with \cref{lem:finite-values}, these statements prove \cref{thm:main}.
\end{proof}

\begin{proof}[Proof of \cref{thm:nonclosed}(i) and (ii)]
For (i), \cref{lem:dense-finite-values} and the inclusions $\Du_r\subseteq\Pu r\subseteq\{0\}\cup[r^{-r},1]$ from \cref{lem:finite-values} imply the closure identity for every $r\ge s_0$. \Cref{thm:attainment}(ii) shows that $\Pu r\cap[0,4r^{-r})$ is countable and consists of algebraic numbers. For $r\ge s_0$, the set $\Du_r\cap[r^{-r},4r^{-r})$ is dense in $[r^{-r},4r^{-r})$. For (ii), countability excludes every nondegenerate interval below $4r^{-r}$ for all $r\ge3$. For $1\le C<4$, any interval $[Cr^{-r},1]$ contains such a nondegenerate subinterval, so it cannot be a subset of $\Pu r$.
\end{proof}

Together with \cref{thm:projective}, the proof of \cref{thm:main} shows that, for every $r\ge s_0+12$, $\Pu r$ is exactly the set of limits of the Lagrangians along projective sequences of finite $r$-palettes.

\section{Concluding remarks}\label{sec:concluding}
GPT produced proofs of the following two results for the intermediate uniform density levels. For $1\le j\le r-3$, define $\pi_{j,\mathrm{unif}}(\Fu)$ by replacing $r-2$ with $j$ in the density definitions in \cref{sec:intro}, and write $\Pi_{j,\mathrm{unif},\infty}^{(r)}\coloneqq\{\pi_{j,\mathrm{unif}}(\Fu)\colon\Fu\text{ is a possibly infinite family of }r\text{-graphs}\}$.

A linear ordering of $V(F)$ is \emph{$q$-position-consistent} if every $q$-set contained in two edges occupies the same set of positions when each edge is listed in increasing order.

\begin{theorem}[{\cite[Theorem 1.2]{hierarchy}}]\label{thm:intermediate-zero}
For every $r\ge4$, every $1\le j\le r-3$, and every $r$-graph $F$, we have $\pi_{j,\mathrm{unif}}(F)=0$ if and only if $F$ has a $(j+1)$-position-consistent ordering.
\end{theorem}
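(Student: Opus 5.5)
The plan is to reduce both directions to palette statements for the $j$-level density. This needs a $j$-level analogue of \cref{thm:palette-tools}(i), which I will take as available in the form established by Lin, Sun, Wang, and Zhou for general levels. In that formulation a palette colors the $(j+1)$-subsets of an ordered $r$-graph. Each edge carries the tuple of colors of its $(j+1)$-subsets, indexed by the position sets in $\binom{[r]}{j+1}$. Then $\pi_{j,\mathrm{unif}}(F)$ is the supremum of Lagrangians of finite palettes that do not color $F$. Positive Lagrangian means there is at least one admissible pattern, and a single admissible pattern already has Lagrangian bounded below by a constant depending only on $r$ and $j$. So it suffices to show that $F$ is colorable by every nonempty such palette if and only if $F$ has a $(j+1)$-position-consistent ordering.

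For the ``if'' direction, fix a $(j+1)$-position-consistent ordering of $V(F)$ and a nonempty palette with an admissible pattern $\pi\colon\binom{[r]}{j+1}\to C$. For each $(j+1)$-set $Q$ contained in some edge, color $Q$ by $\pi(I_Q)$, where $I_Q$ is its position set inside that edge. Consistency says $I_Q$ does not depend on the edge chosen, so this is well defined. Every edge then sees exactly the pattern $\pi$, hence is admissible, and the remaining $(j+1)$-sets may be colored arbitrarily. Thus $F$ is colorable by every nonempty palette, so no palette of positive Lagrangian is $F$-deficient, and $\pi_{j,\mathrm{unif}}(F)=0$.

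For the ``only if'' direction, use the ``universal'' palette $U$. Its colors are the position sets $I\in\binom{[r]}{j+1}$, and its only admissible pattern is the identity map $I\mapsto I$. It is nonempty, so its Lagrangian is positive. If $\pi_{j,\mathrm{unif}}(F)=0$, then $U$ cannot be $F$-deficient, so $F$ is $U$-colorable under some vertex ordering. In that coloring, every $(j+1)$-subset $Q$ of every edge receives the color equal to its own position set in that edge. A $(j+1)$-set shared by two edges receives a single color, so its position sets in the two edges agree. This is exactly $(j+1)$-position-consistency.

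The main obstacle is the first step. It needs the precise form of the palette characterization at level $j$, in particular that colors live on $(j+1)$-sets and that the ordering is chosen once for $F$. It also needs the uniform positive lower bound on Lagrangians of nonempty palettes, the analogue of \cref{lem:finite-values}. For that bound, weight each position-class color pattern by a product measure over a random vertex partition and check that the density is at least $|C|^{-\binom{r}{j+1}}$-type. The hypothesis $j\le r-3$ is presumably what makes this characterization valid. The case $j=r-2$ is handled separately, as in \cref{sec:palettes}, by the ordered-face convention.
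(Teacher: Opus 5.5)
The paper gives no proof of this statement; it only quotes it from \cite{hierarchy}. Judged on its own, your combinatorial core is correct. With a $(j+1)$-position-consistent ordering, coloring each $(j+1)$-set by the entry of a fixed admissible pattern at its position set is well defined. Conversely, colorability by the one-pattern palette with color set $\binom{[r]}{j+1}$ forces consistency. This would even survive if the right palette notion at level $j$ carried colors on all levels $j+1,\dots,r-1$, because the position set of a larger subset of an edge is the union of the position sets of its $(j+1)$-subsets.

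The gap is the step you defer, and it is the entire content of the ``if'' direction. The implication ``colorability by every nonempty level-$j$ palette forces $\pi_{j,\mathrm{unif}}(F)=0$'' is the upper-bound half of a level-$j$ palette characterization, and nothing here supplies it. \Cref{thm:palette-tools} concerns only $\pi_{r-2}$, with colors on $(r-1)$-sets. The paper credits \cite{lswz} with extending the characterization to all uniformities at that level, not to all levels. The levels $1\le j\le r-3$ appear only in \cref{sec:concluding}, as results of \cite{hierarchy}. So you have reduced the theorem to an unproved statement at least as strong as the theorem itself.

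To close the gap you must show that every $(d,\eta,j)$-dense $r$-graph with $d>0$ on enough vertices contains $F$. One route is the following.
\begin{enumerate}
\item Regularize with partitions of the $i$-sets for $i\le r-1$.
\item Use the $j$-level density to find dense choices of cells at levels $j+1,\dots,r-1$ over ordered $r$-sets of clusters.
\item Apply Ramsey's theorem to get many clusters on which these dense patterns form the same nonempty palette.
\item Embed $F$ with the counting lemma; the consistent ordering is what makes the cell of each $(j+1)$-set of $F$ well defined.
\end{enumerate}

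The ``only if'' direction, by contrast, needs only the easy half of the characterization, and you can prove it directly. Color the $(j+1)$-subsets of $[n]$ independently and uniformly from $\binom{[r]}{j+1}$, and keep the $r$-sets in which every $(j+1)$-subset receives its own position set. Each color affects at most $n^{r-j-1}$ of the $r$-sets, so bounded differences give failure probability $\exp(-\Omega(\eta^2n^{j+1}))$ for each $j$-graph. This beats a union bound over the $2^{\binom nj}$ $j$-graphs, so the construction is $(N^{-N},\eta,j)$-dense with $N=\binom r{j+1}$. Any copy of $F$ in it exhibits a consistent ordering, so when no such ordering exists the construction is $F$-free.

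Two smaller points. No uniform lower bound on Lagrangians is needed, since positivity for a nonempty palette is immediate. Also, $j\le r-3$ is not a validity condition: at $j=r-2$, your one-tuple argument combined with \cref{thm:palette-tools} already yields the $(r-1)$-position-consistency criterion.
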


\begin{theorem}[{\cite[Theorem 1.5]{hierarchy}}]\label{thm:intermediate-spectrum}
There exists $r_0\in\N$ such that, for all $r\ge r_0$ and $1\le j\le r-3$, we have $\Pi_{j,\mathrm{unif},\infty}^{(r)}=\{0\}\cup[N^{-N},1]$, where $N\coloneqq\binom r{j+1}$.
\end{theorem}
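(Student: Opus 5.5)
The plan is to reduce the theorem to the ordinary-density theorem of Li, Liu, and Mubayi applied in uniformity $N=\binom r{j+1}$, following the strategy of \cref{lem:dense-finite-values} but working directly with possibly infinite families rather than finite palettes. The reason is that the phenomenon behind \cref{thm:nonclosed} shows that limits of finite-palette Lagrangians need not be uniform densities, so the interval must come from an exact transfer of whole Tur\'an problems.

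\textbf{Palette framework.} I first set up the level-$j$ analogue of \cref{thm:palette-tools}. Colors are assigned to the $(j+1)$-subsets of the vertex set. Each edge of an ordered $r$-graph then reads off an $N$-tuple, with positions indexed by $\binom{[r]}{j+1}$ in a fixed order. An $N$-palette colors $F$ if some vertex order and some coloring make every edge admissible. I assume, as in~\cite{lswz}, that
\[
\pi_{j,\mathrm{unif}}(\Fu)=\sup\{\lambda(P)\colon P\text{ is a finite }\Fu\text{-deficient }N\text{-palette}\}.
\]
The inclusion $\Pi_{j,\mathrm{unif},\infty}^{(r)}\subseteq\{0\}\cup[N^{-N},1]$ then follows exactly as in \cref{lem:finite-values}: a nonempty palette contains a tuple, and AM--GM bounds its monomial below by $N^{-N}$. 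The value $0$ comes from the empty family's complement, for instance a family of zero density supplied by \cref{thm:intermediate-zero}.

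\textbf{Transfer step.} For the reverse inclusion I aim for the following. For every family $\Gu$ of $N$-graphs and every $1\le m\le N!$, there is a family $\Fu$ of $r$-graphs with
\[
\pi_{j,\mathrm{unif}}(\Fu)=\frac{m}{N!}\,\pi(\Gu).
\]
The construction takes an $N$-graph $H$ and orients each edge in $m$ fixed ways, giving the palette $P_m(H)$ of \cref{lem:orientation-lift}. The family $\Fu$ consists of all $r$-graphs that are not colorable by any palette $P_m(H)$ with $H$ $\Gu$-free.

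I then need two facts.
\begin{enumerate}
\item The maximal $\Fu$-deficient palettes are, up to homomorphism, exactly these palettes $P_m(H)$. This is proved by a separation argument in the style of \cref{thm:palette-tools}(ii).
\item The supremum of their Lagrangians is $(m/N!)\sup_H\lambda(H)=(m/N!)\pi(\Gu)$. This uses the Brown--Simonovits density of Lagrangians in $\Pord N$.
\end{enumerate}
This is where $j\le r-3$ enters. For such $j$, distinct edges can share $(j+1)$-sets in many incompatible position patterns, because \cref{thm:intermediate-zero} shows position-consistency is a genuine restriction. I will use this flexibility to build, for every $N$-tuple pattern not induced by some $P_m(H)$, an $r$-graph gadget that detects the pattern. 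This should rule out the repeated-color and ladder-type palettes that created the gap at level $r-2$. The missing intervals there arose from the single repeated-pair tuple $R_{p,q}$; here I expect gadgets to force any palette with repeated colors to be replaceable by a rainbow one of at least equal Lagrangian.

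\textbf{Covering.} Assume $N\ge s_0$, which holds once $r\ge r_0$, since $N\ge r$ for $1\le j\le r-3$. By~\eqref{eq:ordinary-input}, $\Pord N=\{0\}\cup[N!/N^N,1]$. The transfer therefore puts $[mN^{-N},m/N!]$ into $\Pi_{j,\mathrm{unif},\infty}^{(r)}$ for each $m$. Consecutive intervals overlap because $N!/N^N<1/2$, exactly as in \cref{lem:dense-finite-values}, and their union over $1\le m\le N!$ is $[N^{-N},1]$.

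\textbf{Main obstacle.} I expect the transfer step to be the main difficulty, specifically the upper bound $\pi_{j,\mathrm{unif}}(\Fu)\le(m/N!)\pi(\Gu)$. It requires showing that every $\Fu$-deficient palette maps into some $P_m(H)$ with $H$ $\Gu$-free. I would first try to prove this by a Ramsey-plus-gadget argument modelled on \cref{lem:ladder-ramsey,lem:ladder-coloring}. Homogenize the position patterns on a large structure in the palette, then exhibit a finite $r$-graph that is colorable by the offending pattern but by no admissible $P_m(H)$. If a uniform bound of this kind fails, the fallback is to pass through projective sequences and \cref{thm:projective} with $m$ fixed.
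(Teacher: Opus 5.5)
The paper only quotes this theorem from its companion paper, so there is no internal proof to compare with. Judged on its own, your plan fails at the transfer step, and both routes you offer for that step are blocked.

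\textbf{The transfer is false as stated.} Color every $(j+1)$-set $T$ of an ordered $r$-graph $F$ by $T$ itself. Each edge $e$ then reads off the $N$ distinct colors $\binom{e}{j+1}$, so $F\in\Col(P_1(\partial F))$. Here $\partial F$ is the $N$-graph with edges $\binom{e}{j+1}$, $e\in F$, each given the orientation read off from $e$. Two edges of $\partial F$ share $\binom{|e\cap e'|}{j+1}\ne 2$ vertices, because $j+1\ge 2$.

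Take $\mathcal G=\{G_2\}$, where $G_2$ consists of two $N$-edges meeting in exactly two vertices. Every $\partial F$ is $\mathcal G$-free, so your $\mathcal F$ is empty and $\pi_{j,\mathrm{unif}}(\mathcal F)=1$. Yet $G_2$ is $N$-partite, so $\pi(\mathcal G)=0$. Your item (ii) also fails for this $\mathcal G$, since a single edge is $\mathcal G$-free; it needs blowup-closed classes.

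Passing to blowup-closed classes repairs (ii) but not the underlying problem. By the same identity coloring, a palette $Q$ is $\mathcal F$-deficient whenever the admissible class contains the shadows of all $Q$-colorable $r$-graphs, however large $\lambda(Q)$ is. So the upper bound $\pi_{j,\mathrm{unif}}(\mathcal F)\le\frac{m}{N!}\pi(\mathcal G)$ requires separating every $Q$ of larger Lagrangian from an \emph{infinite} class of targets by a single $r$-graph. \Cref{thm:palette-tools}(ii) only separates two finite palettes. \Cref{lem:ladder-coloring}(iii) shows that at level $r-2$ an infinite union of colorability classes of rainbow palettes with Lagrangian $r^{-r}$ can equal $\Col(R_{p,q})$, whose Lagrangian is $4r^{-r}$. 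Your gadget paragraph names this as the obstacle but gives no argument, and this uniform separation is the whole content of the theorem.

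\textbf{The projective-sequence fallback cannot reach the critical range,} even granting a level-$j$ version of \cref{thm:projective}. The proof of \cref{prop:projective-stabilizes} concerns palettes only and applies verbatim to $N$-palettes. By \eqref{eq:single-tuple} with $N$ in place of $r$, a Lagrangian below $4N^{-N}$ forces a palette to be rainbow. The fibers of a homomorphism to a rainbow palette meet each admissible tuple at most once. Hence every projective sequence with limit below $4N^{-N}$ stabilizes at a finite-palette Lagrangian, and these values form a countable set of algebraic numbers.

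The theorem asserts all of $[N^{-N},4N^{-N})$. So for transcendental values there, the supremum over deficient palettes, all of which are rainbow, must be unattained. In other words, finite attainment in the sense of \cref{thm:attainment}(i) must fail at level $j\le r-3$.

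Your account of level $r-2$ also points the wrong way. Ladder palettes are harmless, since $\lambda(S_m^{p,q})=r^{-r}$. Finite attainment there arises because the members excluding repeated colors automatically exclude long ladders (\cref{lem:finite-certificate}). The recoloring behind this uses that every position other than $q$ indexes an $(r-1)$-set containing the $q$th vertex. For $j+1\le r-2$, no element of $[r]$ lies in all but one $(j+1)$-subset. A proof presumably has to exploit a difference of this kind, both to build families whose deficient palettes are rainbow with unattained supremum and to prove the matching upper bound. Your proposal contains neither.
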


\section*{Declaration on the use of AI}
The authors used generative AI tools to assist in discussing proof strategies, checking proofs, and improving exposition.

\bibliographystyle{abbrv}
\AddToHookNext{cmd/thebibliography/after}{\setlength{\parsep}{1pt}\setlength{\parskip}{1pt}}
\bibliography{uniform_turan_structure}

\newpage 

\appendix
\crefalias{section}{appendix}
\section{Realization by finite forbidden families}\label{app:finite-family-realization}
We extend the finite-family realization theorem of King, Piga, Sales, and Sch\"ulke~\cite[Theorem 1.1]{kpss} from $3$-palettes to $r$-palettes for every $r\ge3$. This supplies the finite-family assertion in \cref{thm:attainment}(ii). Following their rigidity and counting argument~\cite[Sections 5--6]{kpss}, we combine testability of hereditary properties of directed hypergraphs with the palette characterization and separation in \cref{thm:palette-tools}. The proof also gives stability and equality statements for palettes. We include the details needed to handle tuples with repeated colors.

\begin{theorem}\label{thm:app-fr-realization}
For every $r\ge3$ and every finite $r$-palette $P_0$, there is a finite family $\Gu$ of $r$-graphs with $\pi_{r-2}(\Gu)=\lambda(P_0)$. Consequently, $\Du_r\subseteq\Pufin r$.
\end{theorem}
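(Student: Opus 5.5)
Following the strategy of King, Piga, Sales, and Schülke, I would realize $\lambda(P_0)$ by a finite family $\Gu\subseteq\Fu_{P_0}$, where $\Fu_{P_0}$ denotes the set of all $r$-graphs outside $\Col(P_0)$ as in \cref{lem:finite-values}. Every subfamily satisfies $\pi_{r-2}(\Gu)\ge\pi_{r-2}(\Fu_{P_0})=\lambda(P_0)$, since $P_0$ is $\Fu_{P_0}$-deficient. The whole problem is therefore the upper bound. We need finitely many non-$P_0$-colorable $r$-graphs such that every finite $\Gu$-deficient palette $Q$ satisfies $\lambda(Q)\le\lambda(P_0)$. Then \cref{thm:palette-tools}(i) finishes. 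We may discard the trivial case $P_0=\varnothing$, where a single edge works. We may also replace $P_0$ by a minimal palette (a core) among those with the same $\Col$, so that every self-homomorphism is a bijection.

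First I would prove a \emph{stability} statement. Fix the optimal weightings of $P_0$ and consider a palette $Q$ with $\lambda(Q)\ge\lambda(P_0)$ whose optimal weighting has minimum support. Via the Motzkin--Straus/Frankl--Rödl weight transfer used in \cref{lem:hessian}(iv), every two support colors of $Q$ lie in a common admissible tuple. Using the uniform description of densities through ordered $(r-1)$-face colorings, I would show the following dichotomy, with $\delta>0$ depending only on $P_0$. Either $Q$ is "close" to a blow-up of $P_0$ (or of $\rev(P_0)$), or a random $N$-vertex $r$-graph sampled from the $Q$-construction contains, with positive probability, a member of $\Fu_{P_0}$ on at most $N_0(P_0)$ vertices. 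Here "close" means that, after deleting colors of small total weight, $Q$ admits a homomorphism to $P_0$ or $\rev(P_0)$.

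The second step is the rigidity argument. I would view $P_0$-colorability of ordered $r$-graphs as a hereditary property of directed $r$-graphs, with the faces oriented by position. By testability of hereditary properties of directed hypergraphs, there is a finite bound $N_0$ with the following property. If an ordered construction is far from being $P_0$-colorable, then a random sample of size $N_0$ is non-colorable, and such samples are supplied by the finite set $\Gu_0$ of non-$P_0$-colorable $r$-graphs on at most $N_0$ vertices. Next, \cref{thm:palette-tools}(ii) converts "$\Col(Q)\subseteq\Col(P_0)$ up to small weight" into an actual homomorphism $Q'\to P_0$ or $Q'\to\rev(P_0)$ from a large-weight subpalette $Q'$. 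Finiteness is used here: there are only finitely many palettes on $|C(P_0)|$ colors, so only finitely many separating $r$-graphs are needed. These are added to $\Gu$.

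Finally, a counting and perturbation argument shows that a palette $Q$ which is $\Gu$-deficient and $\delta$-close to a blow-up of $P_0$ has $\lambda(Q)\le\lambda(P_0)$. The extra colors and extra tuples outside the homomorphic image must be forbidden locally. Each possible way an extra color can attach to a blow-up of $P_0$ either creates a bounded-size non-$P_0$-colorable configuration, which goes into $\Gu$, or maps into $P_0$ and so does not increase the Lagrangian. Strict local optimality of the core weighting then shows that a small-weight perturbation cannot raise $\lambda$.

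I expect the main obstacle to be this last step for general $r$ with repeated-color tuples. In that case the Hessian is no longer zero on the diagonal. A color appearing twice in a tuple contributes $x_c^2$ terms, so the first- and second-order analysis near the optimum must also control these diagonal terms. I would handle this by splitting each repeated color into clones, analyzing the resulting rainbow palette, and then verifying that the homomorphisms recombine the clones.
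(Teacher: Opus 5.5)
Your outline has the right ingredients (testability, separation witnesses, a counting step), but the step that gives the \emph{exact} upper bound is missing, and two supporting steps would not work as stated.

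\textbf{The testability step.} Testability applied to host $r$-graphs only yields an approximate bound of the form $\lambda(Q)\le\lambda(P_0)+\eps$. Moreover, \cref{thm:palette-tools}(ii) is an exact statement about whole palettes. It cannot turn ``$\Col(Q)\subseteq\Col(P_0)$ up to small weight'' into a homomorphism from a heavy subpalette. The paper instead applies Austin--Tao testability to the palette $Q$ itself. It views $Q$ as a directed $r$-graph on $C(Q)$, after discarding its $O(n^{r-1})$ repeated-color tuples, with the hereditary property ``admits a homomorphism to $P$ or $\rev(P)$''. The local hypothesis $Q\in\mathcal C_k(P)$ is enforced exactly by finitely many witnesses $F_R\in\Col(R)\setminus\Col(P)$, one for each palette $R$ on at most $k$ colors with $R\nrightarrow P$ and $R\nrightarrow\rev(P)$. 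This is where finiteness enters, not through palettes on $|C(P_0)|$ colors.

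\textbf{The final step.} Even granting strict local optimality, a perturbation argument cannot close the gap. The approximating blowup $S$ differs from $Q$ in $o(n^r)$ tuples, and extra tuples among the blown-up colors change the density at first order. No second-order analysis of $\Lambda_{P_0}$ controls this. The missing idea is the exchange inequality $|S\setminus Q|\ge4|Q\setminus S|$ for a blowup $S$ minimizing $|Q\setminus S|$. The paper proves it with a rigid root $R_*$, the rainbow part of a blowup of $P$ with parts of size $rm$. Every homomorphism $R_*\to P$ is constant on parts, and it forces the new color of a one-color extension into the part of the replaced color. Random embeddings of this root give two bounds:
\begin{enumerate}
\item every color lies in at most $\gamma n^{r-1}$ tuples of $Q\setminus S$;
\item each $b\in Q\setminus S$ forces at least $cn^{r-1}$ tuples of $S\setminus Q$ meeting $b$ in exactly one color.
\end{enumerate}
Double counting then gives $d(Q)\le d(S)\le\lambda(P)$, and rational blowups transfer this to all weightings.

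\textbf{The reduction.} Embedding $R_*$ requires every part of $S$ to have size at least $\beta n$. That is, every color of the target must carry weight bounded below at near-optimal weightings. This holds for a Lagrangian-minimal palette but not for a homomorphism core. For example, take $r=3$ and $P_0=\{(1,1,2)\}\sqcup\{(a,b,c),(c,d,e)\}$.
\begin{enumerate}
\item The second piece maps neither to $\{(1,1,2)\}$ nor to $\{(2,1,1)\}$, because $c$ occupies positions $3$ and $1$.
\item The first piece, having a repeated color, cannot map into the rainbow second piece.
\item So $P_0$ is its own core, yet its unique optimal weighting vanishes on $a,\ldots,e$.
\end{enumerate}
The paper instead passes to a Lagrangian-minimal induced subpalette $P$ with $\lambda(P)=\lambda(P_0)$. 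This may shrink $\Col$, so its witnesses lie outside $\Col(P)$ but possibly inside $\Col(P_0)$. The lower bound comes from $P$ being $\Gu$-deficient. Your requirement $\Gu\subseteq\Fu_{P_0}$ is neither needed nor established by your argument.

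\textbf{Repeated colors.} These are not the main obstacle. The paper handles them combinatorially, for instance by lifting tuples of $P$ with repeated entries to distinct representatives in parts of size $rm$; no Hessian is involved. Your clone-splitting would also lose exactness. The palette $\{(a,a,b)\}$ has Lagrangian $4/27$, whereas splitting $a$ into $k$ clones and keeping the rainbow part gives $(1-1/k)\cdot4/27$.
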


We prove that finitely many local homomorphism constraints bound the Lagrangian by that of a fixed target palette. Fix $r\ge3$. For a finite $r$-palette $Q$, write $d(Q)\coloneqq |Q|/|C(Q)|^r$ for its density under the uniform weighting. A \emph{complete blowup} $S$ of $P$ on a nonempty finite set $V$ is specified by a map $\phi\colon V\to C(P)$, with admissible tuples
\[
S\coloneqq\{(v_1,\ldots,v_r)\in V^r\colon (\phi(v_1),\ldots,\phi(v_r))\in P\}.
\]
The map $\phi$ need not be surjective. If $C(P)=[m]$ and $V_i\coloneqq \phi^{-1}(i)$, then $d(S)=\Lambda_P((|V_i|/|V|)_{i\in[m]})\le\lambda(P)$. All tuple counts in this appendix are ordered and include tuples with repeated colors. Each tuple is counted once.

Call $P$ \emph{Lagrangian-minimal} if $\lambda(P[J])<\lambda(P)$ for every nonempty proper subset $J\subsetneq C(P)$. Every finite palette has a Lagrangian-minimal induced subpalette with the same Lagrangian.

For a finite $r$-palette $P$ and a positive integer $k$, let $\mathcal C_k(P)$ be the class of finite $r$-palettes $Q$ satisfying $Q[J]\to P$ or $Q[J]\to\rev(P)$ for every nonempty $J\subseteq C(Q)$ with $|J|\le k$. The choice between $P$ and $\rev(P)$ may depend on $J$. Moreover, this class is closed under restriction, deletion of tuples, and taking complete blowups. For the last assertion, project at most $k$ colors of a blowup to their image in $Q$, and compose with a homomorphism for that image.

\begin{proposition}\label{prop:app-fr-stability}
Let $P$ be a Lagrangian-minimal $r$-palette with $0<\lambda(P)<1$. There are positive integers $k,N$ and a constant $\delta>0$, depending only on $P$, such that every $Q\in\mathcal C_k(P)$ on $n\ge N$ colors with $d(Q)\ge\lambda(P)-\delta$ admits a complete blowup $S$ of $P$ or $\rev(P)$ on $C(Q)$ satisfying
\[
d(Q)\le d(S)\le\lambda(P),
\quad\text{and}\quad
|Q\triangle S|\le\frac53\bigl(d(S)-d(Q)\bigr)n^r\le\frac53\bigl(\lambda(P)-d(Q)\bigr)n^r.
\]
Every $Q\in\mathcal C_k(P)$, regardless of its number of colors or the value of $d(Q)$, satisfies $\lambda(Q)\le\lambda(P)$.
\end{proposition}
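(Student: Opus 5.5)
The plan is to follow the rigidity-and-counting scheme of King, Piga, Sales, and Sch\"ulke, adapted to ordered $r$-tuples with repeated colors. First I would prove the final assertion $\lambda(Q)\le\lambda(P)$ for every $Q\in\mathcal C_k(P)$, taking the stability statement as given. Suppose some $Q\in\mathcal C_k(P)$ had $\lambda(Q)>\lambda(P)$. Choose a rational weighting with $\Lambda_Q>\lambda(P)$ and take the clone blowup $Q'$ on many colors, as after \cref{thm:palette-tools}. Then $Q'\in\mathcal C_k(P)$, since the class is closed under complete blowups, and $d(Q')>\lambda(P)$. Taking $Q'$ with at least $N$ colors, the stability statement gives $d(Q')\le d(S)\le\lambda(P)$, a contradiction. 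So everything reduces to the stability statement.

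For stability I would argue in three steps.

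\textbf{Step 1: local rigidity from $k$-local homomorphisms.} Fix $k$ larger than $|C(P)|$ plus a constant, and $\delta$ small. Since $d(Q)$ is close to $\lambda(P)$ and every $k$-subset of colors maps to $P$ or $\rev(P)$, averaging shows the following. A random $k$-set $J$ almost always induces a subpalette $Q[J]$ whose density, after weighting by sizes, is near $\lambda(P)$. Lagrangian-minimality of $P$, together with a compactness argument on $\mathbb{S}_{C(P)}$, then forces three things:
\begin{enumerate}
\item the optimal weightings of $P$ form a compact set bounded away from the faces;
\item any homomorphism $Q[J]\to P$ achieving near-optimal density is essentially surjective, hitting each color class proportionally;
\item the homomorphism is unique up to automorphisms of $P$ on the bulk of $J$.
\end{enumerate}
I would use these facts to define, for each color $v$ of $Q$, a "type" $\phi(v)\in C(P)$, namely the majority image of $v$ over random $k$-sets containing it. I would also fix a global orientation choice between $P$ and $\rev(P)$, using that the two options cannot both be nearly optimal consistently on overlapping sets unless they agree.

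\textbf{Step 2: counting against the blowup.} Let $S$ be the complete blowup defined by $\phi$. Then $d(S)=\Lambda_P(\text{class proportions})\le\lambda(P)$. I would compare $|Q\setminus S|$ with $|S\setminus Q|$ by a local exchange argument. Each tuple of $Q$ not in $S$ lies, with its $r$ colors plus representatives of every class, inside a $k$-set. That $k$-set must then map to $P$, which forces a missing $S$-tuple nearby. Averaging these local witnesses should give $|Q\setminus S|\le\tfrac13|S\setminus Q|$, up to a small error. Combined with $|Q|=|S|-|S\setminus Q|+|Q\setminus S|$, this yields $d(S)\ge d(Q)$ and $|Q\triangle S|\le\tfrac53(d(S)-d(Q))n^r$.

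\textbf{Step 3: cleaning.} Colors whose type is ambiguous, or which lie in many "wrong" tuples, are reassigned by choosing the type maximizing their link. This reassignment only increases agreement, since $\partial_i\Lambda_P$ is maximal and equal on the support at optimal weightings.

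The main obstacle is Step 2, specifically getting the sharp constant $\tfrac53$ with ordered tuples that may repeat colors. Here a single bad tuple can involve fewer than $r$ distinct colors, so the local charging must be done carefully over the pattern of repeated positions. I would first try to charge each bad tuple to at least three missing blowup tuples by varying one coordinate within its class. Failing that, I would accept any constant and note that only some linear bound is used later.
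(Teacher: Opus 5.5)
Your derivation of $\lambda(Q)\le\lambda(P)$ from the stability statement via rational complete blowups matches the paper. The stability argument itself, however, has a genuine gap at the local-to-global step. In Step~1, a homomorphism $Q[J]\to P$ or $Q[J]\to\rev(P)$ is determined at best up to an automorphism of $P$ and a reversal. So the \emph{majority image} of a color over random $k$-sets is undefined until all these maps are normalized against a common anchor. Moreover, the claimed uniqueness ``on the bulk of $J$'' does not follow from near-optimal density plus compactness; it is essentially as strong as the stability you are trying to prove.

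The paper does not globalize by hand. After discarding the at most $\binom r2n^{r-1}$ tuples with a repeated color, it applies the Austin--Tao testability theorem for hereditary properties of directed $r$-graphs. This gives a complete blowup $S$ of $P$ or $\rev(P)$ with $|Q\setminus S|\le\delta n^r$. Separately, it builds a rigid anchor: the rainbow part $R_*$ of the $P$-blowup with parts $U_i$ of size $rm$, where $m=|C(P)|$. Lagrangian-minimality makes every endomorphism of $P$ an automorphism, and makes each substitution $i\mapsto j$ break some tuple. This forces three things: every homomorphism $R_*\to P$ is constant on parts and induces an automorphism; every one-color extension at $u\in U_i$ sends the new color to $i$; and $R_*\nrightarrow\rev(P)$ unless $P\cong\rev(P)$. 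You need either such a testing input or a worked-out substitute, and the sketch provides neither.

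Step~2 also cannot deliver the stated exact inequality. The constant $\frac53$ is equivalent to $|S\setminus Q|\ge4|Q\setminus S|$. Your target $|Q\setminus S|\le\frac13|S\setminus Q|$ only yields the constant $2$. An additive ``small error'' would destroy both $d(Q)\le d(S)$ and the bound on $|Q\triangle S|$, and exactness is what later forces $Q=S$ in the extremal applications. Charging each bad tuple to a bounded number of missing tuples is also the wrong scale unless you control how many bad tuples share a missing one.

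The paper fixes $S$ to minimize $|Q\setminus S|$ and proves two estimates.
\begin{itemize}
\item Every color $x$ lies in at most $\gamma n^{r-1}$ tuples of $B=Q\setminus S$. Otherwise, for each of the $m$ possible reassignments of $x$, take one tuple containing $x$ that stays outside the reassigned blowup. A random embedding of $R_*$ plus these tuples gives an $(rm^2+1)$-color subpalette with no homomorphism to $P$ or $\rev(P)$.
\item Every $b\in B$ forces at least $cn^{r-1}$ rainbow tuples of $A=S\setminus Q$ that meet the colors of $b$ in exactly one color. This uses the gadget $T_b$ (the root, one-color extensions at the colors of $b$, and a copy of $b$), which has no homomorphism to $P$ or $\rev(P)$, together with a random embedding.
\end{itemize}
Double counting with $\gamma=c/(4r)$ then gives $|A|\ge4|B|$ exactly. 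Your cleaning step gestures at the first estimate but never turns it into a degree bound, and the second estimate is missing.
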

\begin{proof}
Label the colors of $P$ by $[m]$. We first establish rigidity for a fixed finite subpalette of a complete $P$-blowup. Hereditary testability then gives an approximate partition into $m$ parts. Two counting claims compare the tuples violating this partition with the admissible tuples missing from $Q$.

\begin{claim}\label{cl:app-fr-minimal}
The following statements hold.
\begin{enumerate}
\item Every endomorphism of $P$ is an automorphism, and $P\to\rev(P)$ implies $P\cong\rev(P)$.
\item For each $i\ne j$, there is a tuple in $P$ and a position occupied by $i$ such that replacing that occurrence of $i$ by $j$ gives a tuple outside $P$.
\item There are $\beta,\eps_0>0$ such that $y_i\ge\beta$ for every $i\in[m]$ whenever $\by\in\mathbb{S}_m$ and $\Lambda_P(\by)\ge\lambda(P)-\eps_0$.
\end{enumerate}
\end{claim}
\begin{proof}[Proof of the claim]
For (i), a nonsurjective endomorphism has image $J\subsetneq[m]$. Monotonicity of the Lagrangian under homomorphisms would give $\lambda(P)\le\lambda(P[J])<\lambda(P)$, a contradiction. A bijective endomorphism permutes the finite admissible set and is therefore an automorphism. Since $\Lambda_{\rev(P)[J]}=\Lambda_{P[J]}$, the same argument makes any homomorphism $P\to\rev(P)$ surjective. The two palettes have the same number of admissible tuples, so it is an isomorphism.

For (ii), suppose that replacing any one occurrence of $i$ by $j$ always preserves admissibility. Replacing all occurrences in succession then shows that the map sending $i$ to $j$ and fixing all other colors is an endomorphism. This contradicts (i).

For (iii), minimality implies that every optimal weighting has positive coordinates. By compactness of the set of optimal weightings, choose $\beta>0$ so that every such coordinate exceeds $2\beta$. The maximum of $\Lambda_P$ on the compact set $\{\by\in\mathbb{S}_m\colon\min_{i\in[m]}y_i\le\beta\}$ is then strictly less than $\lambda(P)$. Choosing $\eps_0>0$ smaller than this gap ensures that $\Lambda_P(\by)\ge\lambda(P)-\eps_0$ implies $\min_{i\in[m]}y_i>\beta$, as required.
\end{proof}

Take pairwise disjoint sets $U_1,\ldots,U_m$, each of size $rm$, and let $R_*$ be the rainbow part of the complete $P$-blowup on these parts. We use $R_*$ as the root of the extensions below and write $h\coloneqq |C(R_*)|=rm^2$. Thus a tuple of distinct colors belongs to $R_*$ exactly when the tuple of its part indices belongs to $P$. For $u\in C(R_*)$, a \emph{one-color extension at $u$} retains $R_*$, adds one new color $z$, and adds all tuples obtained by replacing $u$ by $z$ in an admissible tuple of $R_*$. Every added tuple contains $z$ exactly once.

\begin{claim}\label{cl:app-fr-rigidity}
Every homomorphism $\theta\colon R_*\to P$ is constant on each $U_i$, and these constant values define an automorphism of $P$. If $u\in U_i$, then in every homomorphism from the one-color extension at $u$ to $P$, the new color has the same image as $U_i$. If $P\not\cong\rev(P)$, then $R_*\nrightarrow\rev(P)$.
\end{claim}
\begin{proof}[Proof of the claim]
For each $i$, at least $r$ members of $U_i$ have the same image $a_i$. Every admissible tuple of $P$, including one with repeated part indices, can be lifted to distinct representatives among these members. Consequently, $i\mapsto a_i$ is an endomorphism of $P$, and hence an automorphism by \cref{cl:app-fr-minimal}(i). Postcomposing $\theta$ with the inverse of this automorphism, assume that $a_i=i$ for all $i$.

Suppose that $u\in U_i$ has image $j\ne i$. Take an arbitrary admissible tuple containing $i$, and lift one occurrence of $i$ to $u$. Lift the other positions to distinct representatives from the subsets of $U_\ell$ whose colors map to $\ell$. Their images show that replacing this occurrence of $i$ by $j$ preserves admissibility. Since the tuple and occurrence were arbitrary, this contradicts \cref{cl:app-fr-minimal}(ii). Thus $\theta$ is constant on each $U_i$.

For a one-color extension at $u\in U_i$, postcompose the homomorphism with an automorphism of $P$ so that every color in $U_\ell$ maps to $\ell$ for each $\ell\in[m]$. If the new color has image $j\ne i$, lift any admissible tuple containing $i$ by using $u$ at one chosen occurrence and distinct original colors at the other positions, avoiding $u$. Replace that occurrence of $u$ by the new color. The resulting tuple belongs to the extension and again contradicts \cref{cl:app-fr-minimal}(ii). There are enough original colors even when the target tuple repeats $i$, since $|U_i|\ge r$.

Finally, the same choice of representatives applied to a homomorphism $R_*\to\rev(P)$ gives $P\to\rev(P)$. \Cref{cl:app-fr-minimal}(i) then gives the last assertion.
\end{proof}

\begin{claim}\label{cl:app-fr-approximation}
For every $\eta>0$, there are positive integers $k_0,N_0$ such that every $Q\in\mathcal C_{k_0}(P)$ on $n\ge N_0$ colors admits a complete blowup $S$ of $P$ or $\rev(P)$ on $C(Q)$ with $|Q\setminus S|\le\eta n^r$.
\end{claim}
\begin{proof}[Proof of the claim]
Delete all tuples of $Q$ with a repeated color, at a cost of at most $\binom r2n^{r-1}$ tuples. Regard the remaining palette as a directed $r$-graph, with its relation defined on ordered tuples of distinct vertices. The property of admitting a homomorphism to $P$ or to $\rev(P)$ is invariant under relabeling and preserved under taking induced subgraphs. Notice that the source tuple has distinct colors, but its image in $P$ may repeat colors.

The testability theorem of Austin and Tao~\cite[Theorem 1.5 in the arXiv version]{austin-tao} supplies $k_0,N_0$ such that, when every restriction to at most $k_0$ vertices has this property, the directed $r$-graph differs in at most $(\eta/2)n^r$ ordered tuples from one that has the property globally. Their edit distance counts unordered $r$-sets on which the induced structure changes; each such set supports at most $r!$ ordered tuples, so tolerance $\eta/2$ gives at most $(\eta/2)r!\binom nr\le(\eta/2)n^r$ ordered-tuple changes.

The defining condition for $\mathcal C_{k_0}(P)$ supplies the required local property. Let $R$ be the modified directed $r$-graph, and let $S$ be the complete blowup defined by a homomorphism from $R$ to $P$ or $\rev(P)$. Since $R\subseteq S$, the rainbow tuples of $Q$ lying outside $S$ are among the edits. Increasing $N_0$ so that $\binom r2n^{r-1}\le(\eta/2)n^r$ proves the claim.
\end{proof}

We now choose the constants for the two counting claims. The root has at most $E\coloneqq h^r$ tuples, so its union with at most $r$ one-color extensions has at most $(r+1)E$ tuples. The constants $c$ and $\gamma$ will be the thresholds in the missing-tuple and degree estimates. Take $\beta,\eps_0$ from \cref{cl:app-fr-minimal}(iii), and put $c\coloneqq (\beta/2)^{r-1}/(2(r+1)E)$ and $\gamma\coloneqq c/(4r)$. Put $D_0\coloneqq rm^{r-1}$ and $p_0\coloneqq (\gamma/(4D_0))^m$. Choose $\alpha>0$ so that $\alpha<\eps_0$ and $E\alpha(2/\beta)^r<\min\{p_0/2,1/2\}$. Set $\delta\coloneqq \alpha/4$. Apply \cref{cl:app-fr-approximation} with $\eta=\delta$, and take $k\ge\max\{k_0,h+r\}$. Finally, choose an integer $N\ge\max\{N_0,k\}$ such that
\begin{equation}\label{eq:app-fr-size}
 \beta N\ge2(h+r),\qquad
 r\binom r2\le\frac{\gamma N}{2},\quad\text{and}\quad
 (r-1)h\le\frac{\gamma N}{4D_0}.
\end{equation}

Let $Q\in\mathcal C_k(P)$ have $n\ge N$ colors and satisfy $d(Q)\ge\lambda(P)-\delta$. \Cref{cl:app-fr-approximation} supplies a complete blowup of $P$ or $\rev(P)$. Replacing $P$ by $\rev(P)$ if necessary, assume that the target of this blowup is $P$. We have $\Lambda_{\rev(P)}=\Lambda_P$, and the preceding constants are unchanged. Among all complete $P$-blowups on $C(Q)$, choose $S$ minimizing $|Q\setminus S|$. Put $B\coloneqq Q\setminus S$ and $A\coloneqq S\setminus Q$, and write the parts of $S$ as $V_1,\ldots,V_m$. Then $|B|\le\delta n^r$ and $|A|=|B|+|S|-|Q|\le2\delta n^r\le\alpha n^r$.
Moreover, we have $d(S)\ge d(Q)-|B|/n^r\ge\lambda(P)-2\delta$. Hence \cref{cl:app-fr-minimal}(iii) gives $|V_i|\ge\beta n$ for every $i\in[m]$.

We will repeatedly choose a uniformly random injective map $\psi\colon C(R_*)\to C(Q)$ with $\psi(U_i)\subseteq V_i$, avoiding a prescribed set of at most $r$ colors. For any fixed ordered tuple of $t\le r$ distinct root colors, the probability of a specified image is at most $(\beta n/2)^{-t}$. Indeed, the denominator is a product of falling factorials, and each factor is at least $\beta n-h-r\ge\beta n/2$. Since $|R_*|\le E$, the probability that some root tuple maps to $A$ is at most
\begin{equation}\label{eq:app-fr-root-failure}
 E|A|\left(\frac{\beta n}{2}\right)^{-r}\le E\alpha\left(\frac{2}{\beta}\right)^r.
\end{equation}

For $x\in C(Q)$, define $d_B(x)\coloneqq|\{(v_1,\ldots,v_r)\in B\colon x\in\{v_1,\ldots,v_r\}\}|$. Thus each tuple is counted once even if $x$ occurs in several positions.

\begin{claim}\label{cl:app-fr-bad-degree}
For every $x\in C(Q)$, we have $d_B(x)\le\gamma n^{r-1}$.
\end{claim}
\begin{proof}[Proof of the claim]
Suppose that $d_B(x)>\gamma n^{r-1}$. For each $i\in[m]$, change only the part assigned to $x$, placing it in $V_i$. Let $B_i(x)$ be the tuples of $Q$ containing $x$ that lie outside the resulting blowup. The choice of $S$ gives $|B_i(x)|\ge d_B(x)>\gamma n^{r-1}$. There are at most $r\binom r2n^{r-2}$ tuples containing $x$ and a repeated color. This bound follows by specifying a position occupied by $x$ and a pair of positions with equal entries. By~\eqref{eq:app-fr-size}, at least $(\gamma/2)n^{r-1}$ members of each $B_i(x)$ are rainbow.

For each $i\in[m]$, classify the rainbow tuples in $B_i(x)$ by the position of $x$ and the part indices of the other $r-1$ entries. There are at most $D_0$ types. Hence, for every $i$, choose a set $\mathcal B_i\subseteq B_i(x)$ of one type with $|\mathcal B_i|\ge(\gamma/(2D_0))n^{r-1}$. The corresponding tuple of part indices lies outside $P$ when $x$ is assigned to part $i$. Reserve $r-1$ root colors in the corresponding parts $U_j$ for this type, using disjoint reserved colors for different $i$. There are $(r-1)m$ reserved colors in total, while each $U_j$ has $rm$ colors.

Choose $\psi$ as above, avoiding $x$, and expose these reserved groups in order. Suppose that the previously exposed images form a set $W$. Then $|W|\le h$, and at most $(r-1)|W|n^{r-2}$ members of $\mathcal B_i$ meet $W$. By~\eqref{eq:app-fr-size}, at least $(\gamma/(4D_0))n^{r-1}$ candidates remain. Conditional on the previously exposed images, the current group is uniformly distributed over its permissible ordered images, of which there are at most $n^{r-1}$. Its conditional probability of giving a tuple in $\mathcal B_i$, together with $x$, is therefore at least $\gamma/(4D_0)$. This bound holds for every previous exposure, so all $m$ groups succeed with probability at least $p_0$.

By~\eqref{eq:app-fr-root-failure} and the choice of $\alpha$, the probability that some admissible tuple of $R_*$ is mapped outside $Q$ is less than $p_0/2$. Thus some such injection maps every admissible tuple of $R_*$ into $Q$, and the images of the $m$ reserved groups give tuples in $\mathcal B_1,\ldots,\mathcal B_m$, respectively, when combined with $x$.

Consider the subpalette of $Q$ on $\psi(C(R_*))\cup\{x\}$ consisting of the image of $R_*$ and the $m$ selected tuples from the sets $\mathcal B_i$. It has $h+1\le k$ colors but admits no homomorphism to $P$. Indeed, \cref{cl:app-fr-rigidity} fixes the root labels up to an automorphism. After postcomposing a homomorphism with the inverse of the induced automorphism of $P$, each possible image $i$ of $x$ is excluded by the selected tuple from $\mathcal B_i$. There is no homomorphism to $\rev(P)$ either. If the targets are not isomorphic, the root already excludes such a homomorphism; otherwise, compose it with an isomorphism to $P$. Any homomorphism from the induced palette on these colors would also be a homomorphism from this subpalette. This contradicts $Q\in\mathcal C_k(P)$.
\end{proof}

We next give a lower bound on the number of missing tuples associated with each $b\in B$. For $b\in B$, let $X$ be the set of colors occurring in $b$, and let $A_1(b)$ consist of the rainbow tuples in $A$ containing exactly 1 color from $X$.

\begin{claim}\label{cl:app-fr-missing}
For every $b\in B$, we have $|A_1(b)|\ge cn^{r-1}$.
\end{claim}
\begin{proof}[Proof of the claim]
Write $X=\{x_1,\ldots,x_\ell\}$, where $\ell\le r$. For each $j$, choose an original root color in $U_i$ where $x_j\in V_i$, and form its one-color extension with new color $z_j$. Take the union of these extensions with their common root $R_*$. Do not add other tuples containing 2 distinct new colors. Finally, add the tuple obtained from $b$ by replacing every occurrence of $x_j$ by $z_j$.

Call the resulting palette $T_b$. It has at most $h+r\le k$ colors. By \cref{cl:app-fr-rigidity}, any homomorphism $T_b\to P$ can be postcomposed with an automorphism of $P$ so that each $U_i$ maps to $i$; it must then send $z_j$ to the part label of $x_j$. The image of the tuple corresponding to $b$ therefore lies outside $P$, since $b\notin S$. As in the preceding claim, there is no homomorphism to $\rev(P)$. Consequently, there is no injective homomorphism $T_b\to Q$. Indeed, its image $J$ would have at most $k$ colors, so $Q\in\mathcal C_k(P)$ would give $Q[J]\to P$ or $Q[J]\to\rev(P)$. Composing with $T_b\to Q[J]$ would contradict the preceding exclusions.

Fix $z_j\mapsto x_j$ and choose a uniformly random injection of the root mapping each $U_i$ into $V_i$, avoiding $X$. The tuple corresponding to $b$ maps to $b\in Q$. Every other tuple of $T_b$ maps into $S$ and is rainbow; its image meets $X$ in 0 or 1 color. Since no such injection is a homomorphism, the image of at least one of these tuples lies outside $Q$. By \eqref{eq:app-fr-root-failure}, with probability greater than $1/2$ every admissible tuple of $R_*$ is mapped into $Q$. Thus with probability at least $1/2$ some tuple of $T_b$ maps to $A_1(b)$.

Besides the tuple corresponding to $b$, $T_b$ has at most $(r+1)E$ tuples. Fix $a\in A_1(b)$ and a tuple of $T_b$ containing one new color. Its new color has a prescribed image, and its other $r-1$ colors are distinct root colors. The probability that its image is $a$ is at most $(\beta n/2)^{-(r-1)}$. A union bound gives $\frac12\le |A_1(b)|(r+1)E(\beta n/2)^{-(r-1)}$, which proves the claim. The construction also applies when $b$ repeats colors, since it uses one new color for each member of $X$, with repeated occurrences using that same new color. All the other added tuples still contain exactly 1 new color.
\end{proof}

By \cref{cl:app-fr-bad-degree,cl:app-fr-missing}, double counting the pairs $(a,b)$ with $b\in B$ and $a\in A_1(b)$ gives
\[
 cn^{r-1}|B|
 \le\sum_{b\in B}|A_1(b)|
 \le r\gamma n^{r-1}|A|.
\]
For the upper bound, each tuple $a\in A$ has at most $r$ colors, and at most $\gamma n^{r-1}$ tuples of $B$ contain any one of them. Thus $|A|\ge4|B|$, so $|S|-|Q|=|A|-|B|\ge0$ and we have
\[
 |Q\triangle S|=|A|+|B|
 \le\frac53(|A|-|B|)
 =\frac53\bigl(d(S)-d(Q)\bigr)n^r.
\]
Since $d(S)\le\lambda(P)$, this proves the stability assertion.

To prove $\lambda(Q)\le\lambda(P)$ for every $Q\in\mathcal C_k(P)$, suppose instead that some such $Q$ has $\lambda(Q)>\lambda(P)$. Choose a rational probability weighting $\bx\in\mathbb{S}_{C(Q)}$ with $\Lambda_Q(\bx)>\lambda(P)$. Complete blowups of $Q$ in these proportions, on arbitrarily many colors, belong to $\mathcal C_k(P)$ and have density $\Lambda_Q(\bx)$ under the uniform weighting. These blowups satisfy the hypotheses of the density assertion just proved, which bounds their densities by $\lambda(P)$, a contradiction.
\end{proof}

\begin{proof}[Proof of \cref{thm:app-fr-realization}]
If $\lambda(P_0)=0$, forbid a single-edge $r$-graph. If $\lambda(P_0)=1$, use the empty forbidden family. Otherwise choose a Lagrangian-minimal induced subpalette $P\subseteq P_0$ with $\lambda(P)=\lambda(P_0)$, and take $k$ from \cref{prop:app-fr-stability}.

Up to relabeling, there are only finitely many palettes $R$ on at most $k$ colors with $R\nrightarrow P$ and $R\nrightarrow\rev(P)$. For each one, \cref{thm:palette-tools}(ii) gives an $r$-graph $F_R\in\Col(R)\setminus\Col(P)$. Let $\Gu$ consist of these witnesses. It is finite, and $P$ is $\Gu$-deficient. Each $F_R$ has an edge, since every edgeless $r$-graph is $P$-colorable.

If $Q$ is a finite $\Gu$-deficient palette, every induced subpalette on at most $k$ colors maps to $P$ or to $\rev(P)$. Otherwise that subpalette colors its corresponding $F_R$, and hence so does $Q$. Thus $Q\in\mathcal C_k(P)$, and \cref{prop:app-fr-stability} gives $\lambda(Q)\le\lambda(P)$. The characterization in \cref{thm:palette-tools}(i), together with the deficiency of $P$, now gives $\pi_{r-2}(\Gu)=\lambda(P)=\lambda(P_0)$.
\end{proof}

Note that the palette $P_0$ need not be $\Gu$-deficient. Passing to $P$ preserves the Lagrangian, but may change the class of colorable $r$-graphs.

In the case $0<\lambda(P_0)<1$, the realizing family can be chosen as a finite truncation of the canonical forbidden family of the minimal target $P$. Choose an integer $M$ at least the number of vertices of every member of $\Gu$, and let $\Fu_{\le M}(P)$ consist of one representative of each isomorphism class of $r$-graphs $F$ with $|V(F)|\le M$ and $F\notin\Col(P)$. This finite family contains $\Gu$, while $P$ and its blowups remain deficient. Hence $\pi_{r-2}(\Fu_{\le M}(P))=\lambda(P)$. This truncation concerns the canonical family associated with $P$; it need not be a subfamily of a previously given forbidden family.

We next extend the stability estimate to arbitrary weightings. Its equality cases determine the optimal supports, and the unweighted estimate determines the palettes with the maximum number of admissible tuples. Fix a Lagrangian-minimal palette $P$ on $[m]$ with $0<\lambda(P)<1$. Let $k,N,\delta$ be as in \cref{prop:app-fr-stability}, and let $\Gu$ be the finite family constructed from $P$ in the preceding proof. When a blowup of $Q$ is approximated by a blowup of $P$ or $\rev(P)$, the clones of one color of $Q$ may lie in several parts of that blowup. The numbers $z_{v,i}$ below record these portions of its weight.

\begin{samepage}
\begin{proposition}\label{prop:app-fr-consequences}
The following statements hold.
\begin{enumerate}
\item Let $Q\in\mathcal C_k(P)$ and let $\bx\in\mathbb{S}_{C(Q)}$ satisfy $\Lambda_Q(\bx)\ge\lambda(P)-\delta/2$. There are $P'\in\{P,\rev(P)\}$ and nonnegative numbers $z_{v,i}$, with $\sum_{i\in[m]}z_{v,i}=x_v$ for each $v\in C(Q)$, such that
\begin{equation}\label{eq:app-fr-split}
 \sum_{\substack{\boldsymbol v\in C(Q)^r\\\boldsymbol i\in[m]^r}}
 \left|\ind{\boldsymbol v\in Q}-\ind{\boldsymbol i\in P'}\right|
 \prod_{a\in[r]}z_{v_a,i_a}
 \le\frac53\bigl(\lambda(P)-\Lambda_Q(\bx)\bigr).
\end{equation}
\item Let $Q\in\mathcal C_k(P)$ satisfy $\lambda(Q)=\lambda(P)$, and let $\bx$ be an optimal weighting with support $J$. Then $Q[J]$ is a complete blowup of $P$ or $\rev(P)$, with all $m$ parts nonempty. If $\bx$ has minimum support among all optimal weightings of $Q$, then $Q[J]$ is isomorphic to $P$ or $\rev(P)$, and $|J|=m$.
\item For every sufficiently large $n$, every palette maximizing the number of admissible tuples among all $n$-color $\Gu$-deficient palettes is a complete blowup of $P$ or $\rev(P)$.
\end{enumerate}
\end{proposition}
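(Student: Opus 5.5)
The plan is to derive all three parts from \cref{prop:app-fr-stability} by passing from a weighting to large complete blowups and taking limits.

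For (i), approximate $\bx$ by rational weightings $\bx^{(N')}$ with common denominator $N'\to\infty$. For each one, form the complete blowup $Q_{N'}$ of $Q$ in which color $v$ receives $N'x^{(N')}_v$ clones. Since $\mathcal C_k(P)$ is closed under complete blowups, $Q_{N'}\in\mathcal C_k(P)$. Moreover, $d(Q_{N'})=\Lambda_Q(\bx^{(N')})\to\Lambda_Q(\bx)\ge\lambda(P)-\delta/2$, so for large $N'$ the density hypothesis $d\ge\lambda(P)-\delta$ holds. \Cref{prop:app-fr-stability} then gives a complete blowup $S_{N'}$ of some $P'_{N'}\in\{P,\rev(P)\}$ with $|Q_{N'}\triangle S_{N'}|\le\frac53(\lambda(P)-d(Q_{N'}))N'^r$. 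Let $z^{(N')}_{v,i}$ be the fraction of all $N'$ colors that are clones of $v$ assigned to part $i$. Then the normalized symmetric difference is exactly the left side of \eqref{eq:app-fr-split}, with these $z$'s. Pass to a subsequence on which $P'$ is constant and the $z^{(N')}$ converge, using compactness of $[0,1]^{C(Q)\times[m]}$. The left side is a polynomial in the $z$'s, so the inequality passes to the limit, and the marginal constraint $\sum_iz_{v,i}=x_v$ holds in the limit as well.

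For (ii), apply (i) with $\Lambda_Q(\bx)=\lambda(P)$. The right side is then $0$, so every pair $(\boldsymbol v,\boldsymbol i)$ with positive weight satisfies $\ind{\boldsymbol v\in Q}=\ind{\boldsymbol i\in P'}$. The key step is to show that each $v\in J$ has $z_{v,i}>0$ for exactly one $i$, giving a map $\phi\colon J\to[m]$ with $Q[J]$ the complete blowup via $\phi$. Suppose instead that $z_{v,i},z_{v,j}>0$ with $i\ne j$. Using the marginal weighting $y_i=\sum_vz_{v,i}$, which is optimal for $P'$ since $\Lambda_{P'}(\by)=\Lambda_Q(\bx)=\lambda(P)$ by the vanishing, every $y_\ell>0$ by \cref{cl:app-fr-minimal}(iii). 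We may then substitute colors from all parts into any tuple. So whether a tuple containing $v$ lies in $Q$ would have to agree with both $P'$-membership with $v$ read as $i$ and with $v$ read as $j$, in every occurrence. By \cref{cl:app-fr-minimal}(ii) some single replacement of $i$ by $j$ changes membership, a contradiction. Repeated occurrences of $v$ need care here: I would choose the witness occurrence and fill the other occurrences of $i$ with colors other than $v$, which is possible because other colors of part $i$ carry weight or, failing that, because $v$ alone carries $y_i$. This case analysis is the main obstacle. Once $\phi$ is defined, all parts are nonempty because $y_\ell>0$.

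For the minimum-support statement in (ii), merging the colors of one part onto a single color keeps the weighting optimal, since the polynomial factors through the part sums. Minimality therefore forces one color per part, so $|J|=m$ and $Q[J]\cong P'$.

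For (iii), let $Q$ be an extremal $n$-color $\Gu$-deficient palette. As in the proof of \cref{thm:app-fr-realization}, $Q\in\mathcal C_k(P)$. Comparing with a balanced optimal blowup of $P$ gives $d(Q)\ge\lambda(P)-o(1)$, so \cref{prop:app-fr-stability} gives a blowup $S$ with $|Q|\le|S|$. Since $S$ is also $\Gu$-deficient, maximality gives $|Q|=|S|$. Then $|Q\triangle S|\le\frac53(|S|-|Q|)=0$, so $Q=S$.
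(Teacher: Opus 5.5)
Your treatment of (i), (iii), and the minimum-support part of (ii) matches the paper's argument. In (i) you use rational approximation, complete $Q$-blowups lying in $\mathcal C_k(P)$, the stability proposition, the exact identification of the normalized symmetric difference with the left side of \eqref{eq:app-fr-split}, and compactness. In (iii) you compare with a balanced blowup of $P$ and conclude from $|Q\triangle S|\le\frac53(|S|-|Q|)=0$. Both are essentially verbatim the paper's proof.

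The real difference is in the first half of (ii). You first prove that each $v\in J$ has $z_{v,i}>0$ for a unique $i$. This uses the identity $\Lambda_{P'}(\by)=\Lambda_Q(\bx)$ for the marginals, \cref{cl:app-fr-minimal}(iii) to get every $y_\ell>0$, and \cref{cl:app-fr-minimal}(ii) to reach a contradiction; you then read surjectivity off $y_\ell>0$. The paper skips uniqueness entirely. It picks an arbitrary label $\phi(v)$ with $z_{v,\phi(v)}>0$ and observes that $\prod_a z_{v_a,\phi(v_a)}>0$ for every $\boldsymbol v\in J^r$. Vanishing of \eqref{eq:app-fr-split} then gives $\boldsymbol v\in Q\Leftrightarrow(\phi(v_1),\ldots,\phi(v_r))\in P'$ at once. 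Surjectivity comes from Lagrangian-minimality: otherwise $\lambda(Q[J])\le\lambda(P'[\phi(J)])<\lambda(P)$. Your route is longer and re-invokes both parts of \cref{cl:app-fr-minimal}, but it is valid and yields the extra fact $z_{v,i}=x_v\ind{i=\phi(v)}$. The paper's route needs only positivity and minimality.

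The ``main obstacle'' you flag about repeated occurrences of $v$ is not actually an obstacle. In \eqref{eq:app-fr-split} the labels $i_a$ are attached to positions, not to colors. Take the witness tuple $\boldsymbol i\in P'$ and position $a$ from \cref{cl:app-fr-minimal}(ii). Put $v$ at position $a$ and fill each position $b\ne a$ with any color $v_b$ satisfying $z_{v_b,i_b}>0$, possibly $v$ itself. Then compare the two labelings that differ only at position $a$. Both terms have positive weight, namely $z_{v,i}\prod_{b\ne a}z_{v_b,i_b}$ and $z_{v,j}\prod_{b\ne a}z_{v_b,i_b}$, and they force opposite memberships of $\boldsymbol v$ in $Q$. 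The apparent difficulty, and the muddled fallback ``because $v$ alone carries $y_i$'', come only from reading $v$ as a single label ``in every occurrence''. No case analysis about filling the other occurrences of $i$ is needed.
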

\end{samepage}
\begin{proof}
For (i), approximate $\bx$ by rational weightings $\bx^{(j)}$, and take complete $Q$-blowups $Q_j$ in these proportions, with $n_j\coloneqq |C(Q_j)|$ chosen so that $\lim_{j\to\infty}n_j=\infty$. They belong to $\mathcal C_k(P)$ and satisfy $d(Q_j)=\Lambda_Q(\bx^{(j)})$, so $\lim_{j\to\infty}d(Q_j)=\Lambda_Q(\bx)$. For all sufficiently large $j$, we may therefore apply \cref{prop:app-fr-stability} and let $S_j$ be the resulting complete blowup of $P$ or $\rev(P)$. Pass to a subsequence with a fixed target $P'$. If the parts of $S_j$ are $V_1^{(j)},\ldots,V_m^{(j)}$, put $z_{v,i}^{(j)}\coloneqq|\{u\in V_i^{(j)}\colon u\text{ is a clone of }v\}|/n_j$ for $v\in C(Q)$ and $i\in[m]$. Pass to a convergent subsequence of this finite-dimensional array. Its limit satisfies $\sum_{i=1}^m z_{v,i}=\lim_{j\to\infty}\sum_{i=1}^m z_{v,i}^{(j)}=\lim_{j\to\infty}x_v^{(j)}=x_v$ for every $v\in C(Q)$.

At $\bz^{(j)}$, the left side of~\eqref{eq:app-fr-split} is exactly $|Q_j\triangle S_j|/n_j^r$. Indeed, for fixed $\boldsymbol v,\boldsymbol i$, the number of ordered tuples of colors in $Q_j$ whose $a$th entry is a clone of $v_a$ in $V_{i_a}^{(j)}$ for every $a\in[r]$ is $\prod_{a\in[r]}(n_jz_{v_a,i_a}^{(j)})$. This count also holds when entries repeat, since the same color of $Q_j$ may occupy more than one position. Passing to the limit in the estimate from~\cref{prop:app-fr-stability} proves the result.

For (ii), apply (i). The right side of \eqref{eq:app-fr-split} is zero. For every $v\in J$, choose a label $\phi(v)$ with $z_{v,\phi(v)}>0$. For each $(v_1,\ldots,v_r)\in J^r$, including tuples with repeated colors, we have $\prod_{a\in[r]}z_{v_a,\phi(v_a)}>0$. All summands in~\eqref{eq:app-fr-split} are nonnegative, so $(v_1,\ldots,v_r)\in Q\Longleftrightarrow(\phi(v_1),\ldots,\phi(v_r))\in P'$. Thus $Q[J]$ is a complete blowup of $P'$. Since $J$ supports an optimal weighting of $Q$, we have $\lambda(Q[J])=\lambda(Q)=\lambda(P)$. The map $\phi$ is therefore surjective. Otherwise, monotonicity of the Lagrangian under homomorphisms and minimality would give $\lambda(Q[J])\le\lambda(P'[\phi(J)])<\lambda(P)$.

If, in addition, $\bx$ has minimum support, no part can contain two positive-weight colors. Indeed, the polynomial of a complete blowup depends only on the total weight in each part, so concentrating that weight on one color preserves the polynomial value and reduces the size of the support. Thus each part contains exactly 1 positive-weight color, proving the minimum-support assertion in (ii).

For (iii), choose an optimal weighting $\by\in\mathbb{S}_m$ of $P$. For each $n$, choose nonnegative integers $n_1,\ldots,n_m$ summing to $n$ with $|n_i/n-y_i|\le1/n$ for every $i\in[m]$. The vector $(n_i/n)_{i\in[m]}$ differs from $\by$ by at most $m/n$ in the $\ell^1$ norm. The difference-of-products bound in the proof of \cref{thm:compactness} therefore gives $\Lambda_P((n_i/n)_{i\in[m]})\ge\lambda(P)-rm/n$. This is the density of the complete $P$-blowup with part sizes $n_1,\ldots,n_m$. All complete blowups of $P$ or $\rev(P)$ are $\Gu$-deficient. A $\Gu$-deficient palette $Q$ with the maximum number of tuples therefore belongs to $\mathcal C_k(P)$ and satisfies the density hypothesis of \cref{prop:app-fr-stability} for sufficiently large $n$. Let $S$ be the complete blowup supplied by that proposition. Since $S$ is also $\Gu$-deficient, the choice of $Q$ gives $d(Q)\ge d(S)$. The two inequalities in \cref{prop:app-fr-stability} imply $d(Q)=d(S)$ and $Q=S$.
\end{proof}

In particular, for all sufficiently large $n$, the maximum number of admissible tuples in an $n$-color $\Gu$-deficient palette is
\[
 \max_{\substack{n_1,\ldots,n_m\in\mathbb Z_{\ge0}\\n_1+\cdots+n_m=n}}
 \sum_{(i_1,\ldots,i_r)\in P}\prod_{a\in[r]}n_{i_a}.
\]
Reversal preserves the polynomial at every vector of part sizes, so $P$ and $\rev(P)$ give the same maximum.

The conclusions of \cref{prop:app-fr-stability,prop:app-fr-consequences} also hold for palettes deficient for the finite truncation $\Fu_{\le M}(P)$ defined after the proof of \cref{thm:app-fr-realization}, with $M\ge\max\{|V(F)|\colon F\in\Gu\}$. Indeed, this truncation contains $\Gu$, and all complete blowups of $P$ or $\rev(P)$ remain deficient. All stability and equality statements in this appendix concern palettes, rather than their host $r$-graphs.

\section{Countable palettes and uniform density}\label{app:countable-colorability}
We give a countable palette with Lagrangian $2/9$ that colors every finite $3$-graph. Its admissible tuples are closed under all coordinate permutations.

Let $P$ have color set $C(P)\coloneqq\binom{\N}{2}$, viewed as the edge set of the countable complete graph. A triple of colors is admissible if its entries are the 3 edges of a triangle, in any order. For each $m\ge3$, let $P_m\coloneqq P[\binom{[m]}2]$. Thus $P_3\subseteq P_4\subseteq\cdots$ and $P=\bigcup_{m\ge3}P_m$.

\begin{proposition}\label{prop:countable-colorability}
The following statements hold.
\begin{enumerate}
\item For every $m\ge3$, we have $\lambda(P_m)=\lambda(P)=2/9$.
\item Every finite $3$-graph is $P$-colorable, but for each fixed $m\ge3$, all sufficiently large complete $3$-graphs fail to be $P_m$-colorable.
\end{enumerate}
\end{proposition}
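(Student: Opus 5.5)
For (i), I would start from the observation that every admissible triple of $P$ lists the three edges of a triangle in $\N$. The polynomial $\Lambda_P$ is therefore $6\sum_{\{i,j,k\}}x_{ij}x_{jk}x_{ik}$, which is six times the triangle-count polynomial of the edge-weighted complete graph. The lower bound comes from $P_3$: its three colors $12,13,23$ with weight $1/3$ each give $6\cdot 27^{-1}=2/9$. So $\lambda(P_m)\ge 2/9$ for every $m\ge3$, and hence $\lambda(P)\ge2/9$.

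For the upper bound it suffices, by the countable-palette convention, to show $\lambda(P_m)\le 2/9$ for every $m$. I would use the classical inequality that a weighted graph with total edge weight $1$ has weighted triangle sum at most $(\sqrt2/3)^{3}\cdot\frac{1}{\sqrt{\ }}$-type bounds. Concretely, I would prove
\[
\sum_{\{i,j,k\}}x_{ij}x_{jk}x_{ik}\le\frac1{27}\Bigl(\sum x_e\Bigr)^3 .
\]
One route is to regard the weights as an edge-weighting $W$ and note that the triangle sum equals $\frac16\operatorname{tr}(W^3)$. Then $\operatorname{tr}(W^3)\le\bigl(\operatorname{tr}W^2\bigr)^{3/2}/\sqrt{2}$, since $W$ has zero trace and the cubic sum of eigenvalues with zero sum is maximized as in Kruskal–Katona-type spectral bounds. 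This gives $6\cdot\frac16\cdot\|W\|_F^3/\sqrt2$ with $\|W\|_F^2=2\sum x_e^2\le 2(\sum x_e)^2$, which is too weak as stated. The cleaner option is an inductive Motzkin–Straus style weight-shift. At an optimum of minimum support, any two positive-weight edges lie in a common triangle (the Frankl–Rödl transfer, as in \cref{lem:hessian}(iv)), so the support is a set of pairwise "triangle-adjacent" edges. Such an edge set either lies inside a single triangle, giving at most $6/27$ by AM–GM, or is a star or another configuration with no triangle at all, contributing $0$. Checking that pairwise triangle-adjacency forces containment in one triangle (two edges sharing no vertex lie in no common triangle, so all support edges pairwise share a vertex, hence form a star or a triangle, and a star with $\ge3$ edges contains no triangle among its edges) gives $\lambda(P_m)=2/9$.

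For (ii), I would color a finite $3$-graph $H$ with vertices ordered $1<\dots<N$ by injecting its vertex pairs. Each pair $\{u,v\}$ receives a face color, but a face of a $3$-graph edge is a pair, so I assign to the face $\{u,v\}$ the color $\{u,v\}\in\binom{\N}2$ itself. Every edge $\{u,v,w\}$ then has face colors $vw,uw,uv$, the three edges of a triangle, and these are admissible in any order. For the failure of $P_m$-colorability, I would argue by Ramsey. A $P_m$-coloring of $K_N^{(3)}$ assigns to each pair of $[N]$ one of $\binom m2$ colors, and every triple of $[N]$ receives three distinct pair-colors forming a triangle of $K_m$. For $N\ge R_2(3;\binom m2)$ there is a triple of $[N]$ whose three pairs share the same color, and that monochromatic triple is inadmissible. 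The main obstacle is the upper bound in (i); if the support argument has a gap, for instance in using the minimum-support transfer in this specific palette, I would instead verify directly that the Hessian condition of \cref{lem:hessian} forces all support edges to pairwise share triangles.
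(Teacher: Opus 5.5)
Your proposal is correct and follows the paper's proof. The minimum-support weight transfer forces any two positive-weight colors to share a vertex, so the support is a star (value $0$) or a single triangle (value at most $6\cdot 3^{-3}=2/9$ by AM--GM); part (ii) uses the identity coloring $\{i,j\}\mapsto\{i,j\}$ and Ramsey's theorem for a monochromatic triangle, whose tuple $(e,e,e)$ is inadmissible, exactly as the paper does, and the spectral detour (with its malformed bound) plays no role and can simply be deleted.
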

\begin{proof}
For (i), write $x_{ij}$ for the weight of the color $\{i,j\}$. Since each triangle gives 6 admissible tuples, the polynomial of $P_m$ is
\[
\Lambda_{P_m}(\bx)=6\sum_{1\le i<j<k\le m}x_{ij}x_{ik}x_{jk}.
\]
Assigning weight $1/3$ to each edge of one triangle gives $\lambda(P_m)\ge2/9$. For the reverse bound, choose an optimal probability weighting with minimum support. If 2 disjoint edges have positive weight, no triangle contains both, so the polynomial is affine when weight is transferred between them. Concentrating their total weight on one of the 2 edges cannot decrease the value and reduces the support, a contradiction. Thus the edges of positive weight are pairwise intersecting. Since the polynomial value is positive, they contain a triangle. Any other edge is disjoint from at least one edge of this triangle, so the support consists of exactly its 3 edges. AM--GM therefore gives $\lambda(P_m)\le6(1/3)^3=2/9$. Every finite subset of $C(P)$ is contained in $\binom{[m]}2$ for some $m$, so the definition of the countable-palette Lagrangian gives $\lambda(P)=\sup_{m\ge3}\lambda(P_m)=2/9$.

For (ii), label the vertices of a finite $3$-graph by $[n]$ and color each pair by itself, that is, $\chi(\{i,j\})\coloneqq\{i,j\}$. For every edge $\{i<j<k\}$, the face colors in the prescribed order are $(\{j,k\},\{i,k\},\{i,j\})$, which form an admissible tuple of $P$. Hence every finite $3$-graph is $P$-colorable.

In contrast, for a fixed $m\ge3$, Ramsey's theorem~\cite[Theorem B]{ramsey} implies that every coloring of the pairs of a sufficiently large vertex set with the $\binom m2$ colors of $P_m$ contains a monochromatic triangle. Its 3 face colors form a tuple $(e,e,e)$, which is not admissible in $P_m$. Thus no vertex order and pair coloring can make the complete $3$-graph on this vertex set $P_m$-colorable.
\end{proof}

In particular, $P$ colors complete $3$-graphs of arbitrarily large order, so the supremum of uniform densities attainable by $P$-colorable $3$-graphs is 1, although $\lambda(P)=2/9$. By contrast, the values at the uniform weightings on the finite truncations satisfy $|P_m|/|C(P_m)|^3=6\binom m3/\binom m2^3=8(m-2)/(m^2(m-1)^2)$, and hence $\lim_{m\to\infty}|P_m|/|C(P_m)|^3=0$.

\section{Products and projective sequences}\label{sec:products}
We extend the palette products from \cref{sec:interval} to arbitrary uniform densities and then consider projective sequences below $4r^{-r}$.

\begin{theorem}\label{thm:general-products}
Let $k,s\ge3$, $a\in\Pu k$, and $b\in\Pu s$. Then $ab\in\Pu{k+s}$ and $\frac{k^ks^sab}{(k+s)^{k+s}}\in\Pu{k+s}$.
\end{theorem}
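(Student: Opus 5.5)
The plan is to realize each product value by an explicit forbidden family built from $\star$ and $\diamond$, and then to compute its density with \cref{thm:palette-tools}(i). Fix families $\Fu$ of $k$-graphs and $\Gu$ of $s$-graphs with $\pi_{k-2}(\Fu)=a$ and $\pi_{s-2}(\Gu)=b$. If $a=0$ or $b=0$, the target value $0$ is realized by forbidding a single edge, so assume $a,b>0$. Let $\mathcal P$ be the set of finite $\Fu$-deficient $k$-palettes and $\mathcal Q$ the set of finite $\Gu$-deficient $s$-palettes. By \cref{thm:palette-tools}(i), $\sup_{P\in\mathcal P}\lambda(P)=a$ and $\sup_{Q\in\mathcal Q}\lambda(Q)=b$.

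For $\diamond$, define $\Hu_\diamond$ as the family of all $(k+s)$-graphs that are not colorable by any $P\diamond Q$ with $P\in\mathcal P$ and $Q\in\mathcal Q$. Every such product is then $\Hu_\diamond$-deficient by construction. By \cref{lem:finite-products}, the lower bound $\pi_{k+s-2}(\Hu_\diamond)\ge\sup\lambda(P)\lambda(Q)=ab$ follows. The $\star$ case is identical, using the factor $k^ks^s/(k+s)^{k+s}$.

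For the upper bound, take a finite $\Hu_\diamond$-deficient palette $R$. Then $\Col(R)\subseteq\bigcup_{P,Q}\Col(P\diamond Q)$. The goal is a homomorphism from $R$ into a single product, or its reverse, since monotonicity of the Lagrangian then gives $\lambda(R)\le ab$.

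The main obstacle is this step, because a union of colorability classes need not be controlled by one member. I would first try a universal-hypergraph argument. \Cref{thm:palette-tools}(ii) provides, for each finite palette $R$, a single $(k+s)$-graph $F_R$ that is $R$-colorable and whose colorability by any palette $T$ forces $R\to T$ or $R\to\rev(T)$. One takes $F_R$ as a sufficiently large generic blowup. This is the content of the separation theorem when applied to all $T$ at once, and the plan is to extract it from its proof. Granting this, $F_R\in\Col(R)$ is not in $\Hu_\diamond$, so $F_R$ is colored by some $P\diamond Q$. Hence $R\to P\diamond Q$ or $R\to\rev(P\diamond Q)$.

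If extracting such an $F_R$ fails, I would instead redefine the family so that deficiency is checked factorwise. This route uses a projection argument: from an $R$-coloring, the first $k$ positions of each tuple yield a $k$-palette $R_1$ and the last $s$ positions yield $R_2$, with $R\to R_1\diamond R_2$. The family would be chosen so that $R$ being deficient forces $R_1$ to be $\Fu$-deficient and $R_2$ to be $\Gu$-deficient. To arrange this, use hypergraphs obtained from members $F\in\Fu$ by a fixed link construction, namely joining $F$ with a rainbow $s$-part, which embeds $k$-graph colorings into $(k+s)$-graph ones. Then $\lambda(R)\le\lambda(R_1)\lambda(R_2)\le ab$ by \cref{lem:finite-products}. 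I expect verifying that this link construction transports colorability in both directions, in particular the choice of vertex orders, to be the hardest part.
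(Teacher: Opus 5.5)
Neither of your routes, as written, proves the upper bound.

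\textbf{The first route.} The universal witness does not exist. Take $R=\{(c,\ldots,c)\}$. Every finite $(k+s)$-graph $F$ is colored, face by itself, by the rainbow palette whose colors are the $(k+s-1)$-subsets of $V(F)$ and whose tuples are the face tuples of the edges of $F$. But $R$ maps neither to this palette nor to its reverse. So the witness in \cref{thm:palette-tools}(ii) necessarily depends on the target palette, and it cannot be applied to infinitely many products at once. You also do not need a homomorphism into a single product. Your family $\Hu_\diamond$ is a legitimate choice; what must be shown is that every finite $R$ with $\lambda(R)>ab$ colors some member of it.

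\textbf{The second route.} It has the right ingredients for this: the diagonal map $R\to R_1\diamond R_2$, which gives $\lambda(R)\le\lambda(R_1)\lambda(R_2)$, and the cone over an $R_1$-colorable $F\in\Fu$ with the $s$ new vertices placed last (this is \cref{clm:cone-projection}). However, the cone excludes the products only as an \emph{ordered} hypergraph. A product may color the unordered cone in another vertex order. For instance, if the new vertices come first and $s\ge k$, every face read by $P$ contains a whole edge of $F$ and lies in no other edge, so $P$ no longer constrains $F$ at all. Hence unordered cones need not leave the products deficient, and the lower bound is no longer available.

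The missing idea is \cref{lem:order-amplification}. For each $R$ with $\lambda(R)>ab$, take two ordered witnesses:
\begin{enumerate}
\item the ordered cone $H^+$, over a member of $\Fu$ with new vertices last, or over a member of $\Gu$ with new vertices first; it excludes every product;
\item an ordered witness $H^-$ excluding every reversed product, obtained by swapping the blocks via $\rev(P\diamond Q)\cong\rev(Q)\diamond\rev(P)$.
\end{enumerate}
Then combine: take a random $R$-coloring of the $(k+s-1)$-subsets of $[N]$, a packing of $((v-1)^2+1)$-sets meeting pairwise in at most one vertex, Erd\H{o}s--Szekeres inside each packed set, and a union bound over all $N!$ orders. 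This yields an unordered $G_R\in\Col(R)$ that no product colors. The family $\{G_R\colon\lambda(R)>ab\}$ realizes $ab$, and your $\Hu_\diamond$, which contains it, works as well.

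\textbf{The $\star$ case.} It is not identical. If some color of $R$ occurs both among the first $k$ and among the last $s$ positions, there is no map $R\to R_1\star R_2$, and the corresponding Lagrangian bound is false. For $k=s=3$ and $R=\{(c,x_2,x_3,c,y_2,y_3)\}$, we have $\lambda(R)=4\cdot6^{-6}$, whereas $\frac{3^3\cdot3^3}{6^6}\lambda(R_1)\lambda(R_2)=6^{-6}$.

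The paper handles this case with a separate two-edge ordered witness. Suppose a common color occupies positions $p\le k<q$ in tuples of $R$. The edges $[k+s+1]\setminus\{p\}$ and $[k+s+1]\setminus\{q+1\}$ share a face, lying at position $q$ in the first edge and at position $p$ in the second. Assigning to each edge a tuple that puts the common color on this face gives an $R$-coloring. No $P\star Q$ colors this graph, because its two blocks use disjoint color sets. The reversal and order-amplification steps then apply as before. Only when the two blocks of $R$ use disjoint colors does $R\subseteq R_1\star R_2$ give the bound.
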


To prove \cref{thm:general-products}, we first remove the vertex order from two ordered witnesses. We use the argument of Lin, Sun, Wang, and Zhou~\cite[Lemma 4.5]{lswz}, with the same pair of witnesses excluding every palette in a possibly infinite collection. For completeness, we give the proof of this form below. For an ordered hypergraph $H$, write $H^{\rm rev}$ for the same hypergraph with its vertex order reversed.

\begin{lemma}\label{lem:order-amplification}
Let $r\ge3$, let $R$ be a finite $r$-palette, and let $\mathscr T$ be any collection of $r$-palettes. Suppose there are finite ordered $r$-graphs $H^+,H^-$, each containing an edge, that are $R$-colorable, such that $H^+$ is not $T$-colorable and $H^-$ is not $\rev(T)$-colorable for any $T\in\mathscr T$. Then there is a finite unordered $r$-graph $G\in\Col(R)$ that does not belong to $\Col(T)$ for any $T\in\mathscr T$.
\end{lemma}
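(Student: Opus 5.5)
We will build $G$ from ordered copies of $H^+$ and $H^-$, so that every vertex order of $G$ induces, on some copy, either the original order of $H^+$ or the reverse order of $H^-$. Fix an $R$-coloring of each witness in its given order. The tool is Ramsey's theorem for $r$-sets, applied to an auxiliary complete structure: let $h\coloneqq\max\{|V(H^+)|,|V(H^-)|\}$ and choose $N$ so large that every $2$-coloring of the $h$-subsets of $[N]$ has a homogeneous set of size $2h$. The point of the construction is that an arbitrary order on a large vertex set must contain a long monotone pattern relative to a reference order.

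\emph{Construction.} Take the vertex set $[N]$ with its natural order. For every $h$-subset $X\subseteq[N]$, place a copy of $H^+$ on the first $|V(H^+)|$ elements of $X$ and a copy of $H^-$ on its first $|V(H^-)|$ elements, in increasing order. A naive union of all these copies may fail to be $R$-colorable, because faces are shared between copies with conflicting colors. To avoid this, we will take vertex-disjoint gadgets, following \cite[Lemma 4.5]{lswz}. For each $h$-subset $X$ we use fresh vertices and copy $H^+$ and $H^-$ there. We then link the gadgets by identifying only vertices, never $(r-1)$-faces: any two edges from different copies should share at most $r-2$ vertices. This keeps every $(r-1)$-face inside a single copy, so the fixed $R$-colorings of the copies combine consistently. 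Hence $G\in\Col(R)$, with the order being the order on $[N]$ extended within each gadget.

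\emph{Exclusion.} Suppose $G$ is $T$-colorable for some $T\in\mathscr T$ under some vertex order $<'$. Restricted to the $N$ spine positions, $<'$ yields a permutation of $[N]$. By Erdős--Szekeres, there is a monotone subsequence of length $h$, and on it $<'$ either agrees with or reverses the natural order. In the agreeing case, the copy of $H^+$ on these vertices inherits its original order, so $H^+$ is $T$-colorable; this is a contradiction. In the reversing case, the copy of $H^-$ carries the reversed order. A $T$-coloring of an ordered graph in reversed order is the same as a $\rev(T)$-coloring of the graph in its original order, since face $i$ swaps with face $r+1-i$. So $H^-$ is $\rev(T)$-colorable, again a contradiction. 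Because the witnesses are fixed and independent of $T$, the same $G$ works uniformly for all of $\mathscr T$, even when $\mathscr T$ is infinite.

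\emph{Main obstacle.} The delicate point is making every copy's vertices lie on the spine, so that the monotone argument applies, while also keeping distinct copies face-disjoint so that the $R$-colorings glue. If copies on overlapping spine sets share $(r-1)$-faces, the construction breaks. We will first try to handle this by blowing up: replace each spine point $i$ by a large class of clones, and realize each copy on a transversal using distinct clones, chosen so that no two copies share $r-1$ vertices. The order on the classes is then compared via Ramsey/Erdős--Szekeres applied to one representative per class. We also need that an order inside a copy agrees with the class order on the chosen transversal. To secure this, we will first pick a monotone sequence of classes and then, within those classes, a representative transversal that avoids previously used faces; a counting argument on the large classes guarantees that such a transversal exists.
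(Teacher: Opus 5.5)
There is a genuine gap. The step you label the main obstacle is the whole content of the lemma, and neither version of your construction resolves it.

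\emph{The first construction is not $R$-colorable.} Placing copies of $H^+$ and $H^-$ on initial segments of every $h$-subset of $[N]$ makes the Erd\H{o}s--Szekeres step work. But an $(r-1)$-subset of $[N]$ then sits at different relative positions in different copies and needs conflicting colors.

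\emph{The face-disjoint repair breaks the exclusion step.} With fresh-vertex gadgets, the monotone subsequence of spine vertices carries no copy at all. In the blow-up version, you choose the transversal after seeing $<'$, although $G$ must be fixed first. Also, the order of one representative per class says nothing about how $<'$ orders the clones on a pre-chosen transversal. If you place copies on all transversals to cover every possible order, each face again lies in several copies at different relative positions, and the coloring conflict returns. What you would need is a fixed family of face-disjoint copies, one of which every vertex order makes monotone. That is a sparse ordered Ramsey statement of partite-construction type, and no counting argument on large classes supplies it.

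The missing idea is to randomize the coloring instead of fixing the copies.
\begin{enumerate}
\item Color every $(r-1)$-subset of $[N]$ independently and uniformly from $C(R)$, and keep each $r$-set whose face tuple in the natural order lies in $R$. Then $G\in\Col(R)$ by construction and no gluing is needed. A copy of a witness is present wherever the random colors agree with its fixed $R$-coloring.
\item Pad both witnesses to $v$ vertices and fix an order $\prec$. By greedy packing, take at least $\eta_mN^2$ sets of size $m=(v-1)^2+1$ that pairwise share at most one vertex. Erd\H{o}s--Szekeres gives, inside each set, $v$ vertices on which $\prec$ agrees with or reverses the natural order.
\item On those $v$ vertices, ask for the fixed coloring of $H^+$ in the agreeing case or of $H^-$ in the reversing case. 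Each such event has probability $p=|C(R)|^{-\binom{v}{r-1}}$. Since $r-1\ge2$, different packed sets use disjoint faces, so the events are independent.
\item The failure probability for $\prec$ is at most $(1-p)^{\eta_mN^2}$. Since $N!(1-p)^{\eta_mN^2}<1$ for large $N$, a union bound over all orders finishes the proof.
\end{enumerate}

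Your orientation step is correct: a $T$-coloring in the reversed order is a $\rev(T)$-coloring in the original order. This step, and the uniformity over $\mathscr T$, carry over unchanged.
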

\begin{proof}
Add isolated vertices so that both ordered $r$-graphs have $v$ vertices, and put $m\coloneqq(v-1)^2+1$. For sufficiently large $N$, a greedy packing gives at least $\eta_mN^2$ subsets of $[N]$, each of size $m$, any two meeting in at most 1 vertex, where $\eta_m>0$ depends only on $m$. Indeed, each selected $m$-set rules out at most $\binom m2\binom{N-2}{m-2}$ candidates, out of $\binom Nm$ in total.

Independently color every $(r-1)$-set of $[N]$ uniformly from $C(R)$, and retain every $r$-set whose tuple in the natural order belongs to $R$. The resulting unordered $r$-graph is $R$-colorable.

Fix an order $\prec$ of $[N]$. In each packed $m$-set, the monotone-subsequence theorem~\cite[p.~467]{erdosszekeres} gives $v$ vertices whose natural order agrees with $\prec$ or reverses it. Choose this subset deterministically from the two orders. Fix an $R$-coloring of each witness, extending it arbitrarily to faces outside its edges. On the chosen vertices in their natural order, require the fixed coloring of $H^+$ in the agreeing case and that of $H^-$ in the reversing case. Each prescribed coloring occurs with probability $p\coloneqq|C(R)|^{-\binom v{r-1}}>0$. On the corresponding event, the selected vertices contain $H^+$ or $H^-$, respectively, in the natural order. In order $\prec$, the latter is $(H^-)^{\rm rev}$, which cannot be $T$-colored because $H^-$ cannot be $\rev(T)$-colored. Either event therefore excludes every $T\in\mathscr T$ at once.

The events for distinct packed sets use disjoint $(r-1)$-faces because $r-1\ge2$, so they are independent. The failure probability for this order is at most $(1-p)^{\eta_mN^2}$. Taking a union bound over all $N!$ orders gives total failure probability at most $N!(1-p)^{\eta_mN^2}<1$ for sufficiently large $N$. Some resulting $r$-graph therefore excludes every target palette in every vertex order.
\end{proof}

\begin{proof}[Proof of \cref{thm:general-products}]
If $a=0$ or $b=0$, both conclusions give zero. Assume that $a,b>0$. For a $(k+s)$-palette $R$ and $\varnothing\ne I=\{i_1<\cdots<i_t\}\subseteq[k+s]$, its \emph{coordinate projection} is the $t$-palette $R_I\coloneqq\{(c_{i_1},\ldots,c_{i_t})\colon(c_1,\ldots,c_{k+s})\in R\}$ on $C(R)$. Each projected tuple is included only once. We first check how a coloring of a coordinate projection extends after common vertices are added to every edge.

\begin{claim}\label{clm:cone-projection}
Let $R$ be a finite $(k+s)$-palette, and let $R_I$ be its projection onto the first $k$ coordinates. If an ordered $k$-graph $F$ is $R_I$-colorable, then the ordered $(k+s)$-graph obtained by adding the same $s$ new vertices to every edge, placing all new vertices last, is $R$-colorable. The analogous statement holds for the last $s$ coordinates by placing $k$ new vertices first.
\end{claim}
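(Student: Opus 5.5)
The plan is to build the coloring of the cone directly from a coloring of $F$. Fix an $R_I$-coloring $\chi$ of $F$, with $I=[k]$, and extend it to all $(k-1)$-subsets of $V(F)$ arbitrarily. Let $w_1<\cdots<w_s$ be the new vertices, placed after every vertex of $V(F)$, and let $F'$ be the cone $\{e\cup\{w_1,\ldots,w_s\}\colon e\in F\}$. The faces of an edge $e'=e\cup W$, with $W\coloneqq\{w_1,\ldots,w_s\}$, come in two kinds. The first $k$ faces, in the order of \cref{sec:palettes}, have the form $(e\setminus\{v_i\})\cup W$. The last $s$ faces have the form $e\cup(W\setminus\{w_j\})$.

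\emph{Step 1.} A face of the first kind contains all of $W$. Deleting $W$ therefore recovers a $(k-1)$-subset of $V(F)$, and this subset determines the face. A face of the second kind contains exactly $s-1$ new vertices. Deleting its new vertices recovers a $k$-set $e$, which is an edge of $F$ whenever the face lies in some edge of $F'$, and it also recovers the missing index $j$. So a $(k+s-1)$-set $T$ of $F'$ belongs to at most one kind, and in each case the relevant data are determined by $T$ alone. This is what makes the coloring well defined on shared faces.

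\emph{Step 2.} For each edge $e\in F$, the tuple $(\chi(e\setminus\{v_1\}),\ldots,\chi(e\setminus\{v_k\}))$ lies in $R_I$. Choose once and for all a witness tuple $\boldsymbol c^{(e)}=(c_1,\ldots,c_{k+s})\in R$ whose first $k$ entries are exactly this tuple; such a witness exists by the definition of the projection.

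Now define the coloring $\chi'$ of $F'$.
\begin{itemize}
\item A face $S\cup W$ of the first kind receives $\chi(S)$.
\item A face $e\cup(W\setminus\{w_j\})$ of the second kind receives $c^{(e)}_{k+j}$.
\end{itemize}
For an edge $e\cup W$, the first $k$ positions then reproduce the first $k$ entries of $\boldsymbol c^{(e)}$, and the last $s$ positions reproduce its remaining entries. The full face-color tuple is therefore $\boldsymbol c^{(e)}\in R$. Faces lying in no edge are colored arbitrarily.

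The main point to check carefully is the consistency claim in Step 1. Faces of the second kind are private to a single edge of $F'$, and it is exactly this privacy that lets us choose the last $s$ colors freely, edge by edge, from a witness. Faces of the first kind are shared among edges, but their colors come from the fixed coloring $\chi$, so sharing causes no conflict. We also need the positions to match, meaning that deleting $v_i$ gives position $i$ and deleting $w_j$ gives position $k+j$. This holds because the new vertices come last in the order.

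For the last $s$ coordinates, put $k$ new vertices $u_1<\cdots<u_k$ first, before all vertices of $V(F)$. The roles swap. Faces of the form $(e\setminus\{v_i\})\cup U$, with $U\coloneqq\{u_1,\ldots,u_k\}$, now occupy positions $k+i$ and are shared, so they are colored by the given coloring. Faces of the form $(U\setminus\{u_j\})\cup e$ occupy positions $j\le k$ and are private to one edge. As before, they take the first $k$ entries of a chosen witness tuple in $R$ extending the projected tuple. The same argument applies verbatim.
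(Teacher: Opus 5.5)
Your proof is correct and follows essentially the same argument as the paper. Faces containing all the new vertices inherit the colors of the given $R_I$-coloring of $F$. Each face missing one new vertex determines its edge and the missing vertex, so it can take the corresponding entry of a witness tuple in $R$ chosen independently for each edge. You also check explicitly that the positions line up because the new vertices come last (or first), which the paper leaves implicit.
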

\begin{proof}[Proof of the claim]
For each edge $e$ of $F$, choose an admissible $R$-tuple completing its prescribed projected tuple. If $Z$ is the set of new vertices, the faces $(e\setminus\{v\})\cup Z$ use the original face colors. They agree whenever shared by different edges. The other faces are $e\cup(Z\setminus\{z\})$. Such a face determines both $e$ and $z$, since the old and new vertex sets are disjoint, so its completion color can be chosen independently for each edge. This gives a consistent coloring. Similarly, for projection onto the last $s$ coordinates, place the new vertices first.
\end{proof}

Choose forbidden families $\Fu,\Gu$ with densities $a,b$. By~\cref{thm:palette-tools}(i), the finite $\Fu$-deficient palettes have Lagrangians with supremum $a$, and the finite $\Gu$-deficient palettes have Lagrangians with supremum $b$. Both deficiency conditions are preserved when palettes are reversed.

We first prove that $ab\in\Pu{k+s}$. Let $\mathscr T_\diamond$ consist of all products $P\diamond Q$ with $P$ a finite $\Fu$-deficient palette and $Q$ a finite $\Gu$-deficient palette. Its Lagrangians have supremum $ab$. For every finite $(k+s)$-palette $R$ with $\lambda(R)>ab$, we will construct a finite $(k+s)$-graph that is $R$-colorable but is not colorable by any member of $\mathscr T_\diamond$.

Put $I\coloneqq[k]$ and $J\coloneqq[k+s]\setminus I$, and let $R_I,R_J$ be the projections of $R$ onto these position sets. For every probability weighting $\bx$ on $C(R)$, we have $\Lambda_R(\bx)\le\Lambda_{R_I}(\bx)\Lambda_{R_J}(\bx)$.
The pointwise inequality follows by expanding the product, since every tuple in $R$ contributes one of its terms. Taking maxima gives $\lambda(R)\le\lambda(R_I)\lambda(R_J)$. Hence $\lambda(R_I)>a$ or $\lambda(R_J)>b$. In the first case some $F\in\Fu$ is $R_I$-colorable. Choose a witnessing order and apply \cref{clm:cone-projection}. The resulting ordered $(k+s)$-graph is $R$-colorable but cannot be colored by any $P\diamond Q\in\mathscr T_\diamond$. Indeed, such a coloring would induce a $P$-coloring of $F$ by taking the first component of each color on the faces in the first coordinate block. The second case uses a member of $\Gu$ and places the new vertices first.

We also need a witness against all reversed targets. Interchanging the components of each color gives $\rev(P\diamond Q)\cong\rev(Q)\diamond\rev(P)$. Apply the same argument with the blocks of sizes $s,k$ and the deficiency classes interchanged. The threshold remains $ab$, so it supplies another ordered $R$-colorable witness excluding every reversed target. \Cref{lem:order-amplification} gives a finite unordered $(k+s)$-graph $G_R$ that is $R$-colorable and is not colorable by any member of $\mathscr T_\diamond$.

Let $\Hu_\diamond$ consist of one such $G_R$ for every finite palette $R$ with $\lambda(R)>ab$, taking representatives on finite labeled color sets. Every target product is deficient for this family, so~\cref{thm:palette-tools}(i) gives density at least $ab$. Every palette with larger Lagrangian colors its designated $G_R$, giving the reverse bound. Hence $\pi_{k+s-2}(\Hu_\diamond)=ab$.

For the product on disjoint color sets, consider the products $P\star Q$ of a finite $\Fu$-deficient palette $P$ and a finite $\Gu$-deficient palette $Q$. Their Lagrangians have supremum $\frac{k^ks^sab}{(k+s)^{k+s}}$. Fix a finite $(k+s)$-palette $R$ with larger Lagrangian, and put $C_I\coloneqq\{c_i\colon(c_1,\ldots,c_{k+s})\in R,\ i\in I\}$ and $C_J\coloneqq\{c_j\colon(c_1,\ldots,c_{k+s})\in R,\ j\in J\}$, the sets of colors occurring in the two coordinate blocks. We construct the ordered witness in two cases.
\begin{enumerate}
\item Suppose $C_I\cap C_J\ne\varnothing$. Choose tuples in which a common color occurs at positions $p\in I$ and $q\in J$. The ordered edges $[k+s+1]\setminus\{p\}$ and $[k+s+1]\setminus\{q+1\}$ share a face at positions $q$ and $p$, respectively. Assign to the first edge the tuple with the common color at position $q$, and to the second the tuple with that color at position $p$. Every other face belongs to just one edge, so the assignment gives an $R$-coloring. No $P\star Q$ colors this ordered $(k+s)$-graph, because its two coordinate blocks use disjoint color sets.
\item Suppose $C_I\cap C_J=\varnothing$. After deleting unused colors, $R_I$ and $R_J$ have disjoint color sets $C_I$ and $C_J$, and $R$ is a subpalette of $R_I\star R_J$. These deletions preserve all three Lagrangians, so \cref{lem:finite-products} gives $\lambda(R)\le\frac{k^ks^s}{(k+s)^{k+s}}\lambda(R_I)\lambda(R_J)$. Thus $\lambda(R_I)>a$ or $\lambda(R_J)>b$. In the first case, choose an $R_I$-colorable $F\in\Fu$ and apply \cref{clm:cone-projection} to a witnessing order. The resulting ordered $(k+s)$-graph is $R$-colorable but cannot be colored by any of the products $P\star Q$, since projection onto the first $k$ positions would give a $P$-coloring of $F$. In the second case, use an $R_J$-colorable member of $\Gu$ and place the new vertices first; projection onto the last $s$ positions gives the analogous contradiction.
\end{enumerate}
Finally, we have
$\rev(P\star Q)=\rev(Q)\star\rev(P)$ after relabeling the disjoint color sets, so the same construction works for the reversed targets. Apply \cref{lem:order-amplification} to obtain an unordered $R$-colorable witness excluding every product $P\star Q$ under consideration. Form a forbidden family from one such witness for each finite $(k+s)$-palette $R$ with $\lambda(R)>\frac{k^ks^sab}{(k+s)^{k+s}}$. By~\cref{thm:palette-tools}(i), this family has density $\frac{k^ks^sab}{(k+s)^{k+s}}$.
\end{proof}

\begin{proposition}\label{prop:projective-stabilizes}
If finite $r$-palettes satisfy $P_{n+1}\to P_n$ and $\lim_{n\to\infty}\lambda(P_n)=t<4r^{-r}$, then there is $N\in\N$ such that $\lambda(P_n)=t$ for every $n\ge N$.
\end{proposition}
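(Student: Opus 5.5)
Since the Lagrangians along a projective sequence are nonincreasing, $\lambda(P_n)\ge t$ for all $n$. It suffices to find a single finite palette $P$ with $\lambda(P)=t$ and homomorphisms $P\to P_n$ for infinitely many $n$. Indeed, if $P\to P_{n_j}$ and $n\le n_j$, composing with the transition maps gives $P\to P_n$. Hence $t=\lambda(P)\le\lambda(P_n)$ for every $n$, which is already known, and this alone is not enough. The argument must instead run the other way. We aim for the reverse bound $\lambda(P_n)\le t$ for large $n$, using the fact that the $P_n$ are deficient for a common forbidden family whose density is $t$.

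\emph{Step 1 (reduction to a forbidden family).} Let $\Fu$ be the family of all finite $r$-graphs that are not $P_n$-colorable for some $n$. Because $P_{n+1}\to P_n$ gives $\Col(P_{n+1})\subseteq\Col(P_n)$, each $P_m$ is deficient for every member of $\Fu$ coming from an index $n\ge m$. So restrict $\Fu$ to $\Fu_m:=\bigcup_{n\ge m}(\text{non-}P_n\text{-colorable graphs})$. Alternatively, and more cleanly, invoke \cref{thm:projective}: its proof realizes $t=\pi_{r-2}(\Fu)$ for a family $\Fu$ for which every $P_n$ is deficient, since $\Col(P_n)$ decreases. If we simply take $\Fu$ to be all graphs outside $\bigcap_n\Col(P_n)$, then each $P_n$ is not obviously deficient, so the literal realization family from \cite{llp} is preferable. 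We then assume $\pi_{r-2}(\Fu)=t<4r^{-r}$ and that every $P_n$ is $\Fu$-deficient.

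\emph{Step 2 (compactness).} By \cref{lem:finite-certificate}, there is a finite $\Fu_0\subseteq\Fu$ and an $M$ such that all $P_n$ are rainbow with no $M$-ladder. \Cref{thm:compactness} then yields a finite $P$ with $\lambda(P)=t$ and $P\to P_{n_j}$, so $P$ is $\Fu$-deficient.

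\emph{Step 3 (stabilization via finite-family rigidity).} Replace $P$ by a Lagrangian-minimal induced subpalette with Lagrangian $t$, and take $k$ from \cref{prop:app-fr-stability}. This is where we use that the $P_n$ map onto each other. We want $P_n\in\mathcal C_k(P)$ for large $n$. Suppose instead that for infinitely many $n$ some induced subpalette $P_n[J]$ with $|J|\le k$ maps to neither $P$ nor $\rev(P)$. There are finitely many palettes on at most $k$ colors up to isomorphism, so one palette $R$ occurs for infinitely many $n$. Because of the maps $P_{n'}\to P_n$, its images propagate downward: the image of $R$ in $P_n$ for a smaller $n$ is again a palette on at most $k$ colors receiving a map from $R$. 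Pigeonholing, we obtain a fixed $R$ on at most $k$ colors that maps into $P_n$ for all $n$, with $R\nrightarrow P$ and $R\nrightarrow\rev(P)$. By \cref{thm:palette-tools}(ii), some $r$-graph $F_R$ lies in $\Col(R)\setminus\Col(P)$, and $F_R$ is then $P_n$-colorable for all $n$. We need to reconcile this with the limit $t$. Consider the palette $P\sqcup R$ (or the realization family of \cref{thm:app-fr-realization} built from $P$), which has density $t$. Note that $P_n$ maps to nothing smaller, so we argue instead by contradiction through the compactness conclusion. Rerun Step 2 with $\Fu$ enlarged by the finite witnesses $F_R$ that the $P_n$ eventually avoid. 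Any $R$ that maps into all $P_n$ must, by the subsequence structure of \cref{thm:compactness} applied to $R\sqcup P_n$-type combinations, map into the limit palette. Once $P_n\in\mathcal C_k(P)$, \cref{prop:app-fr-stability} gives $\lambda(P_n)\le\lambda(P)=t$, and hence equality.

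The main obstacle is Step 3: showing that eventually every $k$-color induced subpalette of $P_n$ maps to $P$ or $\rev(P)$. The natural first attempt is a finer compactness argument. Apply \cref{thm:compactness} to the sequence while simultaneously tracking the finitely many $\le k$-color subconfigurations that occur for all $n$, and choose the limit palette $P$ so that it contains each such persistent configuration. This works because such configurations lie in the support region that controls the Lagrangian.
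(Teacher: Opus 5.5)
Your proposal does not prove the statement. Each step either presupposes the conclusion or aims at something false.

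\textbf{Step 1 is circular.} You assume a family $\Fu$ with $\pi_{r-2}(\Fu)=t$ for which every $P_n$ is $\Fu$-deficient. By \cref{thm:palette-tools}(i), that alone gives $\lambda(P_n)\le t$ for all $n$, hence $\lambda(P_1)=t$. So such a family exists only when the sequence is constant from the start, and then Steps 2--3 are superfluous. Whenever the Lagrangians actually decrease, no family realizing $t$ (such as the one behind \cref{thm:projective}) has all $P_n$ deficient.

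\textbf{Step 2 gives the wrong direction.} It only produces maps $P\to P_{n_j}$, which yield the lower bound you already have.

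\textbf{Step 3 aims at a false claim.} You want $P_n\in\mathcal C_k(P)$ for large $n$, with $P$ Lagrangian-minimal and $\lambda(P)=t$, and this fails in general. For $r=3$, let $P=\{(a,b,c),(c,b,a)\}$ and $P'=\{(a',b',c'),(b',a',c')\}$. Both are rainbow and Lagrangian-minimal with Lagrangian $2/27$. The constant sequence $P_n\coloneqq P\sqcup P'$ (disjoint colors, identity maps) satisfies all hypotheses with $t=2/27$.

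Suppose $P_n\in\mathcal C_k(P'')$ for some $k\ge3$ and some Lagrangian-minimal $P''$ with $\lambda(P'')=2/27$. Then both $P$ and $P'$ map to $P''$ or $\rev(P'')$. By minimality and the rainbow property, any such map is a bijection onto $C(P'')$. Since both palettes then have exactly two tuples, the map is an isomorphism. This forces $P'\cong P$ or $P'\cong\rev(P)=P$. That is impossible: the two tuples share a color in position $3$ in $P'$ but in position $2$ in $P$.

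So persistent small configurations need not map into any admissible limit palette. Enlarging the limit palette to contain them destroys the Lagrangian-minimality that \cref{prop:app-fr-stability} requires. The sentence invoking the \emph{subsequence structure} of \cref{thm:compactness} is not an argument.

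\textbf{The missing idea.} The projective maps themselves bound the support of optimal weightings.
\begin{enumerate}
\item Fix $N$ with $\lambda(P_N)<4r^{-r}$. By \cref{fact:low-density-multiplicity}, $P_N$ is rainbow.
\item For $n\ge N$, the composite $\theta_n\colon P_n\to P_N$ has fibers $\theta_n^{-1}(c)$ that meet each admissible tuple of $P_n$ in at most one position.
\item Hence $\Lambda_{P_n}$ is affine in the weights of each fiber, and an optimal weighting can be concentrated on one color per fiber without loss.
\item Thus $\lambda(P_n)$ is the Lagrangian of a palette on at most $|C(P_N)|$ colors, and there are only finitely many such values.
\item A nonincreasing sequence in a finite set is eventually constant, necessarily equal to $t$.
\end{enumerate}
This is the paper's proof. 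It needs no forbidden family, no compactness theorem, and no stability result.
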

\begin{proof}
Choose $N$ with $\lambda(P_N)<4r^{-r}$, so $P_N$ is rainbow. For $n\ge N$, composition gives a homomorphism $\theta_n\colon P_n\to P_N$. For each $c\in C(P_N)$, every admissible tuple of $P_n$ has at most 1 entry in $\theta_n^{-1}(c)$, so $\Lambda_{P_n}$ is affine in the weights within this set when all other weights are fixed. Starting from an optimal weighting, successively concentrate the total weight of each nonempty set $\theta_n^{-1}(c)$ on 1 color without decreasing the polynomial. Thus $\lambda(P_n)$ is attained on at most $|C(P_N)|$ colors. Only finitely many palette Lagrangians occur on that many colors. Since $(\lambda(P_n))_{n\ge N}$ is nonincreasing with limit $t$, increasing $N$ if necessary gives $\lambda(P_n)=t$ for every $n\ge N$.
\end{proof}

\end{document}